\documentclass[11pt]{amsart}
\usepackage[bottom=1in, right=1in, left=1in, top=1in]{geometry}

\RequirePackage[OT1]{fontenc}
\RequirePackage{amsthm,amsmath}
\RequirePackage{comment}
\RequirePackage{graphicx,subfigure,latexsym,amssymb}
\RequirePackage{float,epsfig,multirow,rotating,times}
\RequirePackage{upgreek,wrapfig}
\usepackage{amsthm, amssymb}
\usepackage{tikz}
\usetikzlibrary{matrix}
\usepackage{nicematrix}
\usepackage{arydshln}
\usepackage{blkarray}
\usepackage{amsthm, amssymb}
\setcounter{MaxMatrixCols}{20}
\newtheorem{theorem}{Theorem}[section]
\newtheorem{proposition}[theorem]{Proposition}
\newtheorem{corollary}[theorem]{Corollary}
\newtheorem{lemma}[theorem]{Lemma}
\theoremstyle{definition}
\newtheorem{example}[theorem]{Example}
\theoremstyle{definition}
\newtheorem{definition}[theorem]{Definition}
\theoremstyle{definition}

\usepackage{mathtools} 

\newcommand{\rows}{\mathrm{rows}}
\newcommand{\cols}{\mathrm{cols}}
\renewcommand{\int}{\mathrm{Int}}
\newcommand{\Ibb}{\mathbb{I}}
\renewcommand{\max}{\mathrm{Max}}
\newcommand{\level}{\mathrm{Level}}

\newcommand{\Acal}{\mathcal{A}}
\newcommand{\Abar}{\bar{\mathcal{A}}}
\newcommand{\Bcal}{\mathcal{B}}
\newcommand{\Gcal}{\mathcal{G}}
\newcommand{\Mcal}{\mathcal{M}}
\newcommand{\bfy}{\mathbf{y}}

\usepackage[colorlinks,citecolor=blue,urlcolor=blue]{hyperref}

\title[Geometry of rational quasi-independence models as toric fiber products]{Geometry of rational quasi-independence models \\ as toric fiber products}

\author{Jane Ivy Coons}
\address{St John's College and Mathematical Institute, University of Oxford}
\email{jane.coons@maths.ox.ac.uk}

\author{Heather A. Harrington}
\address{Mathematical Institute, University of Oxford and}
\address{Wellcome Centre for Human Genetics, University of Oxford and}
\address{Max Planck Institute of Molecular Cell Biology and Genetics, Dresden and}
\address{Centre for Systems Biology Dresden, and}
\address{Faculty of Mathematics, Technische Universit\"at Dresden}
\email{harrington@mpi-cbg.de}

\author{Niharika Chakrabarty Paul}
\address{Max Planck Institute for Mathematics in the Sciences, Leipzig}
\email{niharika.paul@mis.mpg.de}

\begin{document}

\begin{abstract}
    We investigate the geometry of a family of log-linear statistical models called quasi-independence models. The toric fiber product is useful for understanding the geometry of parameter inference in these models because the maximum likelihood degree is multiplicative under the TFP. We define the coordinate toric fiber product, or cTFP, and give necessary and sufficient conditions under which a quasi-independence model is a cTFP of lower-order models. We show that the vanishing ideal of every 2-way quasi-independence model with ML-degree 1 can be realized as an iterated toric fiber product of linear ideals. We also classify which Lawrence lifts of 2-way quasi-independence models are cTFPs and give a necessary condition under which a $k$-way model has ML-degree 1 using its facial submodels.
\end{abstract}

\maketitle

\section{Introduction}

Log-linear models are discrete statistical models that are parameterized by monomials. This large family of models include well-known graphical and hierarchical models. Tools from combinatorics, algebraic geometry and commutative algebra can help to understand statistical properties of the model.
In the algebraic statistics setting, the Zariski closure of a log-linear model is a toric variety and one can use toric geometry to study inference problems.
The log-linear models that we focus on here are called $k$\emph{-way quasi-independence models.} A collection of $k$ discrete random variables that each have a finite number of states satisfy \emph{quasi-independence} if there are some combinations of states that cannot co-occur, but they are otherwise independent. A $k$-way quasi-independence model is therefore specified by a set of tuples $S$, in the product of the state spaces of the $k$ discrete variables, that indicates the states that can occur together. These models have been investigated widely in the statistical theory literature and in applications \cite{bocci2017exact, colombo1988quasi,goodman1994quasi}. 

Given some data $u$ and a parametric statistical model $\mathcal{M}$, the maximum likelihood estimator (MLE) for $u$ in $\mathcal{M}$ is the distribution in $\mathcal{M}$ that maximizes the probability of observing $u$. The MLE can be computed by optimizing the \emph{log-likelihood function}, whose derivatives are rational functions in the parameters of $\mathcal{M}$ in the log-linear case. The number of complex critical points of the log-likelihood function for generic data is called the \emph{ML-degree} of the model. Much recent work in algebraic statistics has explored ML-degrees of discrete models, such as \cite{amendola2019maximum,catanese2006maximum,coons2021quasi, huh2013maximum,huh2014likelihood}. 
 In the present work, we are especially interested in the toric geometry of quasi-independence models with ML-degree 1. Discrete models with ML-degree 1 have been characterized using the Horn uniformization and the theory of A-discrimants in \cite{duarte2021discrete, huh2014likelihood}; however, we still lack a framework for practically determining whether a log-linear model has rational MLE based on its parametrization. 
 The $2$-way quasi-independence models with rational MLE were characterized using solely the combinatorial features of the $\Acal$-matrix in \cite{coons2021quasi}.
 A similar such classification remains an open question for $k > 2$. Here we consider quasi-independence models with rational MLE that are obtained from lower-order models using the toric fiber product operation on their $\Acal$-matrices.
 
The toric fiber product (TFP) is a product of two ideals according to a common and specified multigrading \cite{sullivant2007toric}. In algebraic statistics, the TFP is a commonly used operation that allows for the iterative generation of statistical models from those of lower dimension. Viewing log-linear models as toric fiber products is especially convenient when working in the paradigm of maximum likelihood estimation, as the ML-degree is multiplicative under the toric fiber product \cite{amendola2020maximum}. In the case where the vanishing ideal of the model is the TFP of several linear ideals, the iterative proportional scaling algorithm for computing the MLE exhibits exceptional behavior: it produces the \emph{exact} MLE for the given data in a single step \cite{coons2022rational}.

 In this paper, we define a special type of TFP based on the integer matrix of the monomial parametrization of a log-linear model. We call this a \emph{coordinate toric fiber product}, or cTFP. In Section 3, we give two necessary and sufficient conditions based on this parametrizing matrix under which a $k$-way quasi-independence model is a toric fiber product of two lower-order models. We further develop the combinatorial theory of $2$-way quasi-independence models with rational MLE in Section 4. We use this and our results on cTFPs to prove the following main results.
 
 \begin{theorem} Let $\Mcal$ be a $2$-way quasi-independence model with rational MLE and let $I$ be its toric vanishing ideal.
 \begin{enumerate}
      \item (Theorem \ref{2waythm}) The model $\Mcal$ has an $\Acal$-matrix that realizes $\Mcal$ as a coordinate toric fiber product along each of its internal coordinates.
      \item (Theorem \ref{thm:LinearIdeals}) The ideal $I$ can be realized as an iterated toric fiber product of linear ideals.
\end{enumerate}
 \end{theorem}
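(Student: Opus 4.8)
The plan is to derive part~(2) from part~(1) by induction on the size of the model --- concretely, on $|S|$, the number of allowed cells (equivalently, the number of columns of the $\Acal$-matrix). The base case consists of those $2$-way quasi-independence models with rational MLE that admit no internal coordinate. I would argue that such a model has a very sparse bipartite graph $G_S$: I expect it to be a disjoint union of stars, so that every cell $p_{ij}$ has $i$ or $j$ incident to a unique element of $S$, the monomial parametrization is surjective onto the ambient projective space up to rescaling, and hence $I = (0)$ is already linear. (If the structure theory of Section~4 cuts the list of exceptional models down further, a finite check should still give $I$ linear.)

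For the inductive step, suppose $\Mcal$ has an internal coordinate. By Theorem~\ref{2waythm} it admits an $\Acal$-matrix realizing it as a coordinate toric fiber product along that coordinate, so $I = I_1 \times_{\Acal} I_2$, where $I_1, I_2$ are the toric vanishing ideals of the two factor configurations. These factors are again $2$-way quasi-independence models, since a cTFP along a coordinate splits the $\Acal$-matrix into two sub-configurations, each of which is the $\Acal$-matrix of a $2$-way quasi-independence model on a sub-bipartite-graph of $G_S$; and, the cTFP being nontrivial, each factor is a proper submodel, so $|S_1|, |S_2| < |S|$. Because the ML-degree is multiplicative under the toric fiber product \cite{amendola2020maximum} and is at least $1$ for each factor, the hypothesis $\mathrm{mldeg}(\Mcal) = 1$ forces $\mathrm{mldeg}(\Mcal_1) = \mathrm{mldeg}(\Mcal_2) = 1$; that is, $\Mcal_1$ and $\Mcal_2$ have rational MLE. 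By the inductive hypothesis each $I_\ell$ is an iterated toric fiber product of linear ideals. It then remains to observe that the toric fiber product of two ideals, each of which is an iterated toric fiber product of linear ideals, is again of this form: this requires checking that the multigradings involved are mutually compatible, so that the (associative) iterated toric fiber product is well defined, and this compatibility is exactly what the ``coordinate'' refinement of the TFP developed in Section~3 is meant to supply. Assembling the pieces re-presents $I$ as an iterated toric fiber product of linear ideals, and the induction closes.

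I expect the main obstacle to be precisely this last bookkeeping step: ensuring that the cTFP decomposition handed to us by Theorem~\ref{2waythm} meshes correctly with the decompositions produced recursively inside each factor, so that the final expression is a bona fide iterated toric fiber product --- same grading semigroup, compatible index partitions --- rather than merely an ideal assembled from linear pieces by TFPs in an informal sense. A secondary point needing care is the base case: one must confirm that every rational-MLE $2$-way quasi-independence model that is not a nontrivial cTFP along any internal coordinate genuinely has linear vanishing ideal. This is where the combinatorial classification of Section~4 is used, together with the strength of Theorem~\ref{2waythm} --- that it yields a cTFP at \emph{every} internal coordinate --- so that the sole obstruction to continuing the recursion is the outright absence of internal coordinates, which the classification then pins down.
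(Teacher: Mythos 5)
Your plan rests on two claims that don't survive contact with the actual construction, and both failures trace to the same source: the $\Acal$-matrix supplied by Theorem~\ref{2waythm} is not $\Acal_S$ itself but the reparametrized $\Abar_S$, which has $h+1$ blocks (one per level of the tree $P_S$), and the ``internal coordinates'' are coordinates $1,\dots,h-1$ of \emph{that} matrix.

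First, the inductive step. A cTFP of $\Abar_S$ along an internal coordinate $r$ splits it into a multipartition matrix with blocks $\Bcal_0,\dots,\Bcal_r$ and one with blocks $\Bcal_r,\dots,\Bcal_h$. These are $(r+1)$-way and $(h-r+1)$-way multipartition models, not $2$-way quasi-independence models on sub-bipartite-graphs of $\Gcal_S$. Your inductive hypothesis is stated for $2$-way quasi-independence models with rational MLE, so it simply does not apply to these factors; there is no evident way to realize, say, the truncated matrix $\Abar_S^r$ as the $\Abar_{S'}$ of some smaller $2$-way model $S'$ with rational MLE. The multiplicativity of ML-degree correctly forces each factor to have ML-degree~$1$, but that alone doesn't let you recurse.

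Second, the base case. ``No internal coordinates'' means $h\le 1$, i.e.\ $\Abar_S=\Acal_S$ with only two blocks. Because $P_S$ is a tree with minimal level $1$, $h=1$ forces $P_S$ to be a single vertex, i.e.\ $\Max(S)$ consists of exactly one clique, i.e.\ $\Gcal_S$ is a complete bipartite graph $K_{m,n}$. The vanishing ideal there is the Segre ideal of $2\times 2$ minors, which is very far from $(0)$, and the model is certainly not a disjoint union of stars. (It \emph{is} an iterated TFP of linear ideals in the paper's sense --- $\Bcal_0\times_D\Bcal_1$ --- but that is not what your base case argument produces.) The ``finite check'' you gesture at would therefore not close.

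The paper does not proceed by this kind of divide-and-recurse on the model. Instead it works block by block: it defines $\Abar_S^r$ as the truncation of $\Abar_S$ to its first $r+1$ blocks and shows directly, for every $r$, that $\Abar_S^{r+1}$ is a TFP of a submatrix of $\Abar_S^r$ with a submatrix of the single linear block $\Bcal_{r+1}$ (Theorem~\ref{thm:LinearIdeals}); the homogeneity check uses the explicit partitions $G_x$, $H_x$ indexed by $R_{r+1}\cup C_r\cup Y_{r+1}$. That is precisely the ``bookkeeping'' you flagged as a worry at the end of your proposal --- it is not a side issue but the entire content of the proof, and it is done constructively, not by induction on $|S|$. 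Your instinct that this compatibility is the crux is correct; the gap is that the proposed induction does not supply it, and its factors leave the class of models to which the argument is supposed to apply.
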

 In light of the results of \cite{coons2022rational}, this surprising finding also implies that every such model has a parametrization under which the iterative proportional scaling algorithm exhibits one-cycle convergence. We next consider the richer structure of quasi-independence models by exploring connections to hypergraphs and tensors.

We consider \emph{Lawrence lifts}: an operation that is performed on a graph that is associated to a $2$-way quasi-independence model to generate a hypergraph that is associated to a larger $k$-way model. We provide a necessary and sufficient condition, on the graphs of the $2$-way models, for the $\Acal$-matrix associated to the Lawrence lift to be a coordinate toric fiber product. We combine our results with those of Brysiewicz and Maraj \cite{brysiewicz2023lawrence} to construct a family of quasi-independence models that are ML-degree one but have $\Acal$-matrices that are not toric fiber products. 

We can also view a quasi-independence model to be associated to a tensor and to a polytope, with faces that correspond to the different `slices' of the tensor. Each of these faces are associated to a model, called a \emph{facial submodel}. We prove that if this model has ML-degree $1$, then each of the facial submodels must also have ML-degree $1$. We also provide a counterexample to the converse. 

 We structure this paper as follows. In Section 2, we provide preliminaries on quasi-independence models, maximum likelihood estimation and toric fiber products. We then present the two necessary and sufficient conditions for a $k$-way quasi-independence model to be a cTFP in Section 3. In Section 4, we give a reparametrization of any 2-way quasi-independence model with rational MLE. We provide the necessary and sufficient condition for the model associated to a Lawrence lift to be a TFP in Section 5. In Section 6, we recall some theory of facial submodels and submatrices and use this to give conditions under which the model has ML-degree greater than one.

 \section{Preliminaries}

\subsection{Quasi-independence models }

We begin by reviewing the definitions of parametric and log-linear statistical models. We then introduce quasi-independence models, which are the types of log-linear model that we focus on in the present work.

\begin{definition}A \textbf{statistical model} $\mathcal{M}$ is a collection of probability distributions or density functions. A \textbf{parametric statistical model} $\mathcal{M}_\Theta$ is the image of a map $p$ from finite-dimensional $\Theta \subseteq \mathbb{R}^d$ to a space of probability density or distribution functions. 
Denoting by $p_{\theta}$ the image of $\theta \in \Theta$ under $p$, the parametric statistical model is $\mathcal{M}_\Theta = \{p_\theta: \theta \in \Theta\}$. We refer to $\Theta$ as the {\bf parameter space} of the model.

Denote by $\Delta_{r-1}$ the $(r-1)$-dimensional probability simplex and let $\mathcal{A} \in \mathbb{Z}^{d\times r}$ be a matrix with entries $a_{ij}$.  We assume that the all ones vector $\bf{1}$ lies in the rowspan of $\mathcal{A}$. Then, the \textbf{log-linear model} associated to the given $\mathcal{A}$-matrix is the set of probability distributions: $\mathcal{M}_{\mathcal{A}} = \{p \in \Delta_{r-1} | \log(p) \in \text{rowspan}(A)\}.$
\end{definition}

Equivalently, we may define the map $\phi^{\mathcal{A}}: \mathbb{R}^d \rightarrow \mathbb{R}^r$ coordinate-wise by
$\phi^{\mathcal{A}}_j (t_1 \ldots t_d) = \prod_{i=1}^d t_i^{a_{ij}}$.
Then $\mathcal{M}_{\mathcal{A}} = \phi^{\mathcal{A}}(\mathbb{R}^d) \cap \Delta_{r-1}$.
From this definition, we see that the complex Zariski closure of a log-linear model is a toric variety as it is the image of a monomial map \cite{sullivant2018algebraic}. Its vanishing ideal $I_{\Acal} := I(\mathcal{M}_{\Acal})$ is a toric ideal whose binomials are determined by the integer kernel of $\mathcal{A}$. In particular, a binomial $x^u - x^v$ belongs to $I_{\mathcal{A}}$ if and only if $u-v \in \ker_{\mathbb{Z}}({\mathcal{A}})$ \cite{sturmfels1996grobner}.

Let $X_1,\dots,X_k$ be discrete random variables on state spaces $[n_1],\dots,[n_k]$ respectively. These random variables satisfy \emph{quasi-independence} if there are some combinations of states that cannot occur together, but they are otherwise mutually independent. Let $S \subset [m_1] \times \dots \times [m_k]$. This set of tuples represents the states that can occur together. In order to parametrize this model, we index the coordinates of $\mathbb{R}^{m_1,\dots,m_k}$ by $(s^1_1,\dots,s^1_{m_1},s^2_1,\dots,s^k_1,\dots,s^k_{m_k})$.

\begin{definition} 
    Define the monomial map $\phi^S : \mathbb{R}^{m_1 + \ldots +m_k} \longrightarrow \mathbb{R}^S$ by
    $\phi^{S}_{i_1\ldots i_k}(s^1, \ldots, s^k) = \prod_{j=1}^ks^j_{i_j}.$
    The \textbf{$k$-way quasi-independence model} associated to $S$ is the model:
    $$\mathcal{M}_S := \phi^S(\mathbb{R}^{m_1 + \ldots +m_k}) \cap \Delta_{\# S-1}.$$
    Denote by $\mathcal{A}_S$ the $\mathcal{A}$-matrix of this monomial map. Its columns are indexed by $S$ and its rows split into $k$ blocks according to the $k$ coordinates of the tuples in $S$.
\end{definition}

We now consider 2-way quasi-independence models on random variables $X$ and $Y$ with state spaces $[m]$ and $[n]$, respectively.
To any 2-way quasi-independence model specified by $S$, we can associate a bipartite graph $\mathcal{G}_S$ with. The vertex set of $\mathcal{G}_S$ can be split into $[m]$ and $[n]$ and an edge $(i,j)$ belongs to the graph if and only if $(i,j) \in S$.

\begin{example}\label{one}
Consider the following set of pairs $S = \{(1,1), (1,2), (1,3), (2,1), (2,2), (3,1)\}$. $S$ is then associated to a $2$-way quasi-independence model. We can also represent $S$ as the following ``star matrix", which has stars in the entries corresponding to elements of $S$ and zeros elsewhere:
$$\begin{bmatrix}
\star & \star & \star\\
\star & \star & 0 \\
\star & 0 &0 \\
\end{bmatrix}$$

The $\mathcal{A}$-matrix of this model is a block matrix where each column corresponds to one of the coordinates that comprise the set $S$. Below is the associated $\mathcal{A}$-matrix $\mathcal{A}_S$ and bipartite graph $\mathcal{G}_S$. In the $\mathcal{A}$-matrix, the rows labelled $a_i$ represent the states of the first variable and those labelled $b_i$ represent the states of the second. In the graph, the rectangular vertices are associated to states of the first variable and the circular vertices are associated to states of the second. We then form an edge between the rectangle that represents $a_i$ and the circle that represents $b_j$ if $ij$ is in $S$.

\begin{figure}[ht!]
    \centering
    \begin{minipage}{0.49\textwidth}
 \[\begin{blockarray}{ccccccc}
    &11 &12 &13 &21 &22 &31 \\
\begin{block}{c(cccccc)}
a_1 & 1 & 1 & 1 & 0 & 0 & 0\\
a_2 & 0 & 0 & 0 & 1 & 1 & 0 \\
a_3 & 0 & 0 & 0 & 0 & 0 & 1 \\
    \cline{2-7}
b_1 & 1 & 0 & 0 & 1 & 0 & 1 \\
b_2 & 0 & 1 & 0 & 0 & 1 & 0 \\
b_3 & 0 & 0 & 1 & 0 & 0 & 0 \\
\end{block}
 \end{blockarray}\]
    \end{minipage}
    \begin{minipage}{0.49\textwidth}
    \centering
    \begin{tikzpicture}[every node/.style={minimum size=0.5cm}]
\node [shape=rectangle, draw=black] (A1) at (0,1) {1};
\node [shape=circle, draw=black] (B1) at (0,0) {1};
\node [shape=circle, draw=black] (B2) at (1,1) {2};
\node [shape=rectangle, draw=black] (A2) at (1,0) {2};
\node [shape=rectangle, draw=black] (A3) at (2,1) {3};
\node [shape=circle, draw=black] (B3) at (2,0) {3};

\path [-] (A1) edge (B1);
\path [-] (A1) edge (B2);
\path [-] (A1) edge (B3);
\path [-] (A2) edge (B1);
\path [-] (A2) edge (B2);
\path [-] (A3) edge (B1);
\end{tikzpicture}
        \end{minipage}
           
\end{figure}

\end{example}

We can similarly represent a $k$-way quasi-independence model, via a $k$-dimensional star-tensor or an $\mathcal{A}$-matrix with $k$ blocks. A $k$-way model would also have an associated $k$-partite hypergraph. The $\mathcal{A}$-matrices of $k$-way quasi-independence models were called \emph{multipartition matrices} by Coons, Langer, and Ruddy \cite{coons2022rational}. We define these and consider more examples of $\mathcal{A}$-matrices of quasi-independence models in Section \ref{sec:TFP}

\subsection{Maximum Likelihood Estimators and Maxmimum Likelihood Degrees}

Given a parametric model and some empirical data, we wish to determine the distribution in the model that best fits the data. One way to achieve this is through computing the maximum likelihood estimate, which is the distribution in the model that maximizes the probability of observing the given data.

\begin{definition}
   Given data $D$ from a discrete statistical model $\mathcal{M}$, the \textbf{likelihood function} is
$L(p | D) := p(D)$
where $p(D)$ is the probability of observing the data under the distribution $p \in \mathcal{M}$.
The \textbf{maximum likelihood estimate} (MLE) $\hat{p}$, if it exists, is the distribution  that maximizes the likelihood over the model:
$\hat{p} = {\text{arg max}}_{p \in \mathcal{M}} L(p | D)$. The \textbf{log-likelihood function} $l(p|D)$ is the natural logarithm of the likelihood function.
\end{definition}

To analyze the MLE problem we study the log-likelihood function $l(p|D)$. Since the logarithm is concave, we know that the log-likelihood function and likelihood are maximized at the same values of $p$. In the case of log-linear models, the derivatives of the log-likelihood function are rational functions, which facilitates an algebraic approach.

In the case of independent, identically distributed (iid) data in a discrete statistical model with finite support, the likelihood function has a simple form. Let $\mathcal{M}$ be such a model whose distributions are of the form $p = (p_s)_{s \in S}$ for some finite index set $S$. Let $\mathbf{u} \in \mathbb{N}^{\#S}$ be an iid vector of counts. Then, the likelihood function in this case is
$L(p|u) = \prod_{s \in S} p_s^{u_s}$
and the log-likelihood function is
\[
l(p|u) = \sum_{s \in S} u_s\log(p_s).
\]

\begin{definition}
    The \textbf{maximum likelihood degree} (ML-degree) of a model is the number of complex critical points of the log-likelihood function counted with multiplicities for generic data. In the case where the model has ML-degree 1, the MLE can be written as a rational function of the data \cite[Chapter~7.2]{sullivant2018algebraic}. We say that such a model has \textbf{rational MLE}.
\end{definition}

The following theorem, Birch's Theorem, gives a form of the MLE, if it exists. A proof can be found in \cite{sullivant2018algebraic}, or \cite{lauritzen1996graphical}, for example. 
\begin{theorem}[Birch's Theorem]\label{birchthm}  Let $\mathcal{A} \in \mathbb{Z}^{n\times r}$ such that $\mathbf{1} \in \text{rowspan} ({\mathcal{A}})$. Let $u \in \mathbb{R}^r_{\geq 0}$ such that $u_+ = u_1 + \ldots u_r$. Then, if the maximum likelihood estimate of the model $\mathcal{M}_{\mathcal{A}}$ exists, it is the unique solution to the system of equations: $\mathcal{A} u =$$ u_+ \mathcal{A}p $ and $p \in \mathcal{M}_{\mathcal{A}}$.
\end{theorem}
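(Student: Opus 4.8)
The plan is to reduce Birch's Theorem to a Lagrange-multiplier computation on the monomial parametrization, and then to establish uniqueness by a short convexity argument, following the standard treatment for exponential families.

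First I would pass to the monomial parametrization $p_s = \prod_i t_i^{a_{is}}$. Under it the log-likelihood $l(p\mid u) = \sum_s u_s\log p_s$ becomes $\sum_i (\mathcal{A}u)_i\log t_i$, a concave function of $(\log t_1,\dots,\log t_n)$, while membership of $p$ in the simplex is the single constraint $g(t) := \sum_s \prod_i t_i^{a_{is}} = 1$. Computing $\partial_{t_j} l = (\mathcal{A}u)_j/t_j$ and $\partial_{t_j} g = (\mathcal{A}p)_j/t_j$, the Lagrange condition $\nabla l = \lambda\nabla g$ at a critical point of $l$ on the model reads exactly $\mathcal{A}u = \lambda\,\mathcal{A}p$. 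To pin down $\lambda$ I would use the hypothesis $\mathbf{1}\in\mathrm{rowspan}(\mathcal{A})$: choosing $c\in\mathbb{R}^n$ with $c^\top\mathcal{A} = \mathbf{1}^\top$ and left-multiplying by $c^\top$ gives $u_+ = c^\top\mathcal{A}u = \lambda\,c^\top\mathcal{A}p = \lambda\sum_s p_s = \lambda$. Hence any critical point, in particular the MLE when it exists, satisfies $\mathcal{A}u = u_+\mathcal{A}p$ together with $p\in\mathcal{M}_{\mathcal{A}}$.

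For uniqueness I would take two solutions $p,q$ of this system, so that $\mathcal{A}p = \mathcal{A}q$ and $\log p,\log q\in\mathrm{rowspan}(\mathcal{A})$, whence $\log p - \log q = c^\top\mathcal{A}$ for some $c$. Pairing this with $p$ and with $q$ yields $\sum_s p_s\log(p_s/q_s) = c^\top\mathcal{A}p = c^\top\mathcal{A}q = \sum_s q_s\log(p_s/q_s)$, so subtracting gives $\sum_s (p_s-q_s)\log(p_s/q_s) = 0$; since $\log$ is increasing every summand is nonnegative, forcing $p_s = q_s$ for all $s$. Equivalently, the same manipulation shows $l(p\mid u) - l(q\mid u) = u_+\,D(p\,\|\,q)\ge 0$ by Gibbs' inequality, with equality only when $p = q$, so that a solution of the Birch system is automatically the unique global maximizer of the likelihood.

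The main obstacle I anticipate is not the algebra but the boundary bookkeeping: the Lagrange computation is only valid at an interior critical point, so one must argue that when the MLE exists it is attained where all relevant coordinates are positive, or else handle vanishing coordinates of $u$ by restricting to the face of the simplex they support. I would dispatch this either by that reduction or simply by citing the detailed proofs in \cite{sullivant2018algebraic} or \cite{lauritzen1996graphical}, since it is entirely standard; the content specific to the present setting is just the two displayed derivative computations and the pairing argument above.
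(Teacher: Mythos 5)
The paper does not give its own proof of Birch's Theorem; immediately after the statement it writes that ``a proof can be found in \cite{sullivant2018algebraic}, or \cite{lauritzen1996graphical}.'' So there is no in-text argument to compare against, and the right benchmark is the standard proof in those references.

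Your sketch is correct and is essentially that standard proof. The Lagrange computation in the $t$-variables, the use of $\mathbf{1}\in\mathrm{rowspan}(\mathcal{A})$ to identify the multiplier as $u_+$, and the pairing argument giving $\sum_s(p_s-q_s)\log(p_s/q_s)=0$ (equivalently, the Gibbs/KL-divergence identity $l(p\mid u)-l(q\mid u)=u_+D(p\,\|\,q)$) are exactly the ingredients in Sullivant's and Lauritzen's treatments; the last observation does double duty, proving both uniqueness and global optimality at once, which is the cleanest way to package the argument. Two small checks you left implicit are fine: the constraint gradient $\nabla g(t)$ is nonzero on the positive orthant because $\mathbf{1}^\top p=1$ forces $\mathcal{A}p\ne 0$, and the uniqueness/Gibbs step only needs $p,q\in\mathcal{M}_{\mathcal{A}}$ so that $\log p-\log q\in\mathrm{rowspan}(\mathcal{A})$, which automatically keeps both in the open simplex. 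The boundary bookkeeping you flag at the end is exactly the part that makes a fully self-contained proof longer than it looks, and is handled in the cited sources by restricting to the face of $\Delta_{r-1}$ supported by the data (or by the extended exponential family), so deferring to them there is reasonable.
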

In the context of algebraic geometry, Theorem \ref{birchthm} tells us that the MLE is the unique intersection point of the affine linear space, defined by $\mathcal{A}u = u_+ \mathcal{A}p$, and the positive part of the toric variety $\overline{\mathcal{M}_{\mathcal{A}}}.$
Coons and Sullivant \cite{coons2021quasi} used this algebraic perspective to give a characterization of the 2-way quasi-independence models with rational MLE using the bipartite graph $\mathcal{G}_S$.

\begin{definition}
    A bipartite graph is \textbf{doubly chordal} if each cycle of length greater than or equal to 6 has two chords. Equivalently, it is doubly chordal if it has no induced subgraph that is a cycle of length greater than or equal to 6, or the ``double square" graph obtained by gluing two 4-cycles along a common edge.
\end{definition}

\begin{theorem} \cite{coons2021quasi} \label{janethm}
    Let $S \subset [m] \times [n]$ be a set of indices with associated bipartite graph $\mathcal G_S$ and quasi-independence model $\mathcal M_S$. Then, $\mathcal M_S$ has ML-degree one if and only if $\mathcal G_S$ is doubly chordal bipartite. 
\end{theorem}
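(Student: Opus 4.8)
The plan is to prove the two implications separately: the converse — rational MLE forces $\mathcal{G}_S$ to be doubly chordal bipartite — by contraposition, and the forward direction by a recursive reduction governed by the star-matrix of $S$. It is worth saying at the outset that the forward direction is the genuinely harder one and that the obvious idea is not enough: every doubly chordal bipartite graph can be assembled from complete bipartite graphs by gluing along common edges and vertices, but this alone does not force ML-degree one, since the double square is exactly such a gluing of two $2\times 2$ independence models and is nonetheless the forbidden pattern. So the argument must genuinely use the ``two chords in every long cycle'' hypothesis.

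For the forward direction I would first reformulate the hypothesis as a combinatorial normal form on the star-matrix of $S$: chordal bipartite graphs are precisely those whose star-matrix is, after permuting rows and columns, $\Gamma$-free, and demanding a second chord in every long cycle rules out the double-square pattern and cuts the shape down further to a nested, staircase-like form. From such a presentation I would peel off a vertex $v$ whose deletion leaves a doubly chordal bipartite graph and whose attachment to $\mathcal{G}_S-v$ is of the special type captured by the coordinate toric fiber product conditions developed in Section 3, so that $\mathcal{M}_S$ is a cTFP of $\mathcal{M}_{\mathcal{G}_S-v}$ with a small independence model. Since independence (complete bipartite) models have ML-degree one and the ML-degree is multiplicative under the toric fiber product, induction on $\#S$ with the single-edge model as base case yields ML-degree one. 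Equivalently — and using only Birch's Theorem — one can carry an explicit rational candidate for $\hat p$, built as a ratio of products of nested marginal sums of the data, through the same induction, and verify directly that it satisfies $\mathcal{A}_S u = u_+\mathcal{A}_S p$ and lies in $\mathcal{M}_S$; uniqueness in Birch's Theorem then identifies it with the MLE.

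For the converse I would use the fact that if $\mathcal{G}_S$ is not doubly chordal bipartite then it has an induced subgraph $H$ that is a cycle of length $2\ell\ge 6$ or the double square, together with two claims. First, $\mathcal{M}_H$ has ML-degree at least $2$: the $2\ell$-cycle model is the toric hypersurface $V(\prod_{e\text{ odd}}x_e-\prod_{e\text{ even}}x_e)$ for the alternating $2$-colouring of the cycle's edges, and the double-square model is cut out by two overlapping $2\times 2$ minors sharing a single coordinate; one verifies ML-degree at least $2$ by a direct critical-point computation for the hexagon and the double square, and handles longer cycles by a similar computation or a reduction. Second, the ML-degree of a quasi-independence model does not increase under passage to an induced subgraph: for $S'=S\cap(I'\times J')$, the model $\mathcal{M}_{S'}$ is the section of $\overline{\mathcal{M}_S}$ by the coordinate subspace $\{p_{ij}=0:(i,j)\notin S'\}$, and letting the data counts outside $S'$ tend to $0$ degenerates the Birch system of $\mathcal{M}_S$ to one whose restriction to this coordinate face is the Birch system of $\mathcal{M}_{S'}$, so upper semicontinuity of the number of solutions gives the inequality. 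Combining the two claims forces $\mathcal{M}_S$ to have ML-degree at least $2$. (The converse can alternatively be phrased through the Horn-uniformization description of models with rational MLE.)

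I expect the principal obstacle to lie on the forward side: pinning down the correct normal form on the star-matrix together with the matching closed form for $\hat p$ — equivalently, proving that the second-chord condition is exactly what guarantees a vertex whose removal realizes a genuine coordinate toric fiber product rather than merely a clique sum. Making the degeneration in the converse fully rigorous — excluding loss of critical points to the boundary of the simplex or to infinity in the limit — is the secondary difficulty.
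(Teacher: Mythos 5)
This theorem is quoted in the paper from \cite{coons2021quasi} without a proof, so there is no internal argument to compare against; the comparison below is to the strategy in that reference.

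Your converse has the right skeleton (an induced long cycle or double square gives a facial submodel whose ML-degree exceeds one, and such a submodel obstructs ML-degree one for the ambient model), which is indeed what the cited proof does. But your proposed proof of the inheritance step is not theirs, and the gap you flag is larger than you suggest. You claim ML-degree is monotone under facial submodels by degenerating data to the face and invoking semicontinuity of solution counts. The difficulty is not only that critical points can escape to the divisors $\{p_{ij}=0\}$ or $\{p_+=0\}$; it is that the degenerate Birch system is still constrained by $p\in\overline{\mathcal{M}_S}$, which is a different variety from $\overline{\mathcal{M}_{S'}}$ appearing in the facial submodel's Birch system, so the limiting fiber is not the thing you want to count. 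The cited proof avoids all of this by using Huh's Horn-uniformization criterion for ML-degree one \cite{huh2014likelihood} and showing that the Horn matrix of a facial submodel restricts from that of the full model, which gives exactly the needed implication (recorded here as Theorem~\ref{submodels}) without asserting any general monotonicity.

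The principal gap is in your main plan for the forward direction. Peeling off a single vertex cannot realize $\mathcal{M}_S$ as a coordinate toric fiber product in its standard parametrization: a cTFP as defined in Section~\ref{sec:TFP} requires each factor's $\mathcal{A}$-matrix to have a shared block in addition to its own, i.e.\ an \emph{internal} coordinate to split along, and a $2$-way $\mathcal{A}$-matrix has only two blocks to begin with. The decomposition that does exist, Theorem~\ref{2waythm} of this paper, is obtained only after the elaborate reparametrization of Section~4 that introduces one new block per level of the maximal-clique poset $P_S$; the factors that are peeled off there are whole levels, not single vertices. Worse, invoking Theorem~\ref{2waythm} here would be circular, since its proof (through Lemma~\ref{lem1}) already assumes $\mathcal{G}_S$ is doubly chordal precisely because Theorem~\ref{janethm} says that is equivalent to ML-degree one. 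Your fallback — verify an explicit rational candidate for $\hat p$ against Birch's equations — is in fact the strategy of the cited proof, where $\hat p_{ij}$ is a ratio of products of marginal sums indexed by the maximal cliques of $\mathcal{G}_S$ in the numerator and their maximal pairwise intersections in the denominator, and the verification is where the doubly-chordal hypothesis is actually used; but you have not pinned down the formula or the induction, and that is where the content of the proof lies.
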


The \emph{toric fiber product} (TFP) is an operation on log-linear models under which ML-degrees are multiplicative. They serve as a way to create larger quasi-independence models from smaller building blocks. We introduce these operations in the next section, in order to eventually show that all $2$-way quasi-independence models are toric fiber products of models that are the intersection of the probability simplex with a linear space.

\subsection{Toric fiber Products}\label{sec:TFP}

The toric fiber product operation was defined by Sullivant in \cite{sullivant2007toric} as an operation on polynomial ideals that are homogeneous with respect to the same multigrading. In this section, we define this operation and describe its interpretation in the context of quasi-independence models.

\begin{definition}
For any polynomial ring $\mathbb{C}[t] : = \mathbb{C}[t_1,\dots,t_\ell]$, we can impose a \textbf{multigrading}. A multigrading is an assignment of a multidegree vector to each monomial $t^u$ in the ring, such that $\text{multideg}(t^{u+v}) = \text{multideg}(t^u) + \text{multideg}(t^v)$. 
A polynomial is homogeneous with respect to the multigrading if each of its nonzero terms have the same multidegree. Then, an ideal $I \in \mathbb{C}[t]$ is \textbf{homogeneous with respect to the multigrading} if it has a generating set consisting of homogeneous polynomials. 
\end{definition}

Let $r$ be a fixed positive integer. For each $i \in [r]$, fix positive integers $s_i$ and $t_i$. Then, define the following rings:
\begin{align*}
    \mathbb{C}[x]& = \mathbb{C}[x^i_j | i \in [r], j \in [s_i]]\\
    \mathbb{C}[y]& = \mathbb{C}[y^i_k | i \in [r], k \in [t_i]]\\
    \mathbb{C}[z]& = \mathbb{C}[x^i_{jk} | i \in [r], j \in [s_i], k \in [t_j]].
\end{align*}

Fix multigradings on $\mathbb{C}[x]$ and $\mathbb{C}[y]$ such that $\text{multideg}(x^i_j) =\text{multideg}(y^i_j) = \mathbf{d}^i$. Define $D$ to be the matrix with columns $\mathbf{d}^i$. We will refer to $D$ as the multigrading matrix. We assume throughout that the columns of $D$ are linearly independent; in the literature, these are sometimes referred to as \emph{codimension zero TFPs} and one can remove this assumption to obtain \emph{higher codimension TFPs} \cite{RAUH2016276}.
If $I \subset \mathbb{C}[x]$ and $J \subset \mathbb{C}[y]$ are ideals that are homogeneous with respect to the multigrading, we can define the ring homomorphism:
\begin{eqnarray}
\phi_{I,J}: \mathbb{C}[z] \rightarrow& (\mathbb{C}[x]/I) \otimes_\mathbb{C} (\mathbb{C}[x]/I) \nonumber \\
z_{jk}^i \mapsto& x_j^i \otimes_\mathbb{C} y_k^i. \nonumber
\end{eqnarray}

\begin{definition}\label{tfpdef} The \textbf{toric fiber product} (TFP) of $I$ and $J$ with respect to $D$ is: $I \times_D J := ker(\phi_{I, J})$. We also say that the variety $V(I \times_D J)$ is the toric fiber product of the varieties $V(I)$ and $V(J)$.
\end{definition}

Here we are specifically interested in the TFPs of quasi-independence models and how to build a given quasi-independence model using TFPs. In the case of log-linear models, the toric fiber product can be constructed on the level of the $\mathcal{A}$-matrices by simply ``gluing" columns together according to their multigrading. 

\begin{definition} \cite{coons2022rational}
    A $0/1$ matrix $A \in \mathbb{R}^{n \times m}$ is a \textbf{multipartition matrix} if the rows of $A$ can be partitioned into submatrices $A^1, \ldots A^p$, where in each $A^i, i \in \{1, \ldots, p\}$ the sum of the column entries sum to $1$. Then, the $A^i$ are referred to as the \textbf{partition matrices} or \textbf{blocks} of $A$. 
\end{definition}

We note that the $\mathcal{A}$-matrix of a quasi-independence model is always a multipartition matrix as the parameters of the model are grouped by their corresponding random variable. However, multipartition matrices are also allowed to have repeated columns.
Let us consider two $\mathcal{A}$-matrices of two quasi-independence models of any order, specified by tuples $S_1$ and $S_2$. We write
\[
\Acal_{S_1} = \begin{pmatrix}
\alpha \\
\hline 
\beta \end{pmatrix} \qquad \text{and} \qquad \Acal_{S_2} = \begin{pmatrix}
\overline{\beta} \\
\hline
\gamma \end{pmatrix}.
\]

Here $\beta$ is one partition matrix of $\mathcal{A}_{S_1}$ and $\alpha$ represents the other $k-1$ blocks of the matrix. Similarly, $\overline\beta$ is one block of $\mathcal{A}_{S_2}$ and $\gamma$ represents the other blocks of the matrix. We further suppose that $\beta$ and $\overline\beta$ have the same number of rows. Let us index a column of $\mathcal{A}_{S_1}$ by $\bf{i}j$, where $\bf{i}$ is a list of coordinates corresponding to blocks $\alpha$ and $j$ is associated to $\beta$. We denote this column of $\mathcal{A}_{S_1}$ by $\alpha_{\mathbf{i}} \beta_j$ and the restriction of this column to the rows of $\beta$ by $\beta_j$. Similarly, we index a column of $\mathcal{A}_{S_2}$ by $j\mathbf{k}$. 

In this case, we can assign the column indexed by $\bf{i}j$ the multidegree vector $\beta_j$ and the column indexed by $j\bf{k}$, $\overline\beta_j$. Then, the multigrading matrix is formed of the columns of $\beta$, or equivalently $\overline\beta$, without repetitions. Hence $D$, in this case, is the identity matrix with as many rows as $\beta$, or equivalently $\overline\beta$.

Let $I_1$ be the vanishing ideal of the log-linear model specified by $\mathcal{A}_{S_1}$ and $I_2$ be the ideal of the model associated to $\mathcal{A}_{S_2}$.
The rows of $\beta$ are by definition in the rowspan of  $\mathcal{A}_{S_1}$ and the rows of $\overline\beta$ are in the rowspan of $\mathcal{A}_{S_2}$.  Hence the two ideals $I_1$ and $I_2$ are homogeneous with respect to the multigrading and we may take their toric fiber product with respect to $D$.

The $\mathcal{A}$-matrix for this TFP is constructed as follows. For each pair of columns with the same multidegree, $\alpha_{\mathbf{i}}\beta_j$ and $\overline\beta_j\gamma_{\mathbf{k}}$ such that $\beta_j = \overline\beta_j$, we obtain a column in the matrix representing the toric fiber product of the two matrices with respect to this chosen multigrading. The column is of the form will have the following form: $(\alpha_i^T \ \ | \ \ \beta_j^T = \overline\beta_j^T \ \ | \ \ \gamma_k^T)^T$. The matrix representing the toric fiber product consists exactly of the columns obtained in this way. On the level of sets of tuples $S_1$ and $S_2$, each $\bf{i}$$j$ and $\bf{k}$$j$ glue together to form $\bf{ik}$$j$. The toric fiber product of the two models is then the quasi-independence model specified by all tuples of this form.

\begin{definition} A toric fiber product that is formed by the above procedure is called a \textbf{coordinate toric fiber product} (cTFP). 
\end{definition}

The multidegrees of a coordinate toric fiber product can equivalently be specified on the level of the polynomial ideals, columns of the corresponding $\mathcal{A}$-matrices or tuples in the index set $S$. We therefore slightly abuse terminology and refer to a TFP of $\mathcal{A}$-matrices or sets of tuples.

\begin{example}\label{eg1}
Consider the  $2$-way quasi-independence models specified by the sets of tuples in $[3] \times [3]$, 
\begin{align*}
    S_1 &= \{(1,1), (1,3), (2,1), (2,2), (3,3)\}\\
    S_2 &= \{(1,1), (1,3), (2,1), (3,2), (3,3)\}.
\end{align*}
Their corresponding $\mathcal{A}$-matrices are:
\[\mathcal{A}_{S_1}=\begin{pmatrix}
1 & 1 & 0 & 0 & 0 \\
0 & 0 & 1 & 1 & 0  \\
0 & 0 & 0 & 0 & 1  \\
\hline 
1 & 0 & 1 & 0 & 0  \\
0 & 0 & 0 & 1 & 0  \\
0 & 1 & 0 & 0 & 1 \\
\end{pmatrix} \qquad
\text{and} \qquad \mathcal{A}_{S_2}=
\begin{pmatrix}
1 & 1 & 0 & 0 & 0 \\
0 & 0 & 1 & 0 & 0  \\
0 & 0 & 0 & 1 & 1  \\
\hline 
1 & 0 & 1 & 0 & 0  \\
0 & 0 & 0 & 1 & 0  \\
0 & 1 & 0 & 0 & 1 \\
\end{pmatrix}.\]

Every pair of $(i,j) \in S_1$ is assigned the multidegree $e_j \in \mathbb{R}^3$. Similarly,
every pair in $(j,k) \in S_2$ is assigned the multidegree $e_j$. So, when we take the toric fiber product, we merge together each $(i,j)$ and $(j,k)$ to produce triples of the form $(i,j,k)$

So, the toric fiber product of these two models is the quasi-independence model specified by the set of triples,
$$ S = \{(1,1,1), (1,1,3), (1,3,2), (1,3,3), (2,1,1), (2,1,3), (2,2,1), (3,3,2), (3,3,3)\}.$$
\end{example}

As mentioned above, uncovering an underlying TFP structure is especially helpful when considering maximum likelihood estimation for log-linear models. This arises from the following multiplicative properties. 

 \begin{theorem}\label{amenmle}\cite{amendola2020maximum}
 Let $B$ and $C$ be the matrices representing the log-linear models and let $I$ and $J$ be the corresponding vanishing ideals of these models. Let $D$ be a suitable multigrading matrix and $\hat{p}(B)$, $\hat{p}(C)$, and $\hat{p}(D)$ be the MLEs of the associated log-linear models. Then, if D has linearly independent columns, the $(i, j, k)$th coordinate function of the maximum likelihood estimator for the log-linear model associated to the toric fiber product $I \times_D J$ is
$$\frac{\hat{p}^i_j(B)\hat{p}^i_k(C)}{\hat{p}^i(D)}.$$
 \end{theorem}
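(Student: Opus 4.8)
The plan is to derive the formula directly from Birch's Theorem (Theorem \ref{birchthm}), by exhibiting the proposed expression as the unique point of the toric fiber product satisfying the Birch equations. Write $A$ for the $\mathcal{A}$-matrix of $I \times_D J$, with columns indexed by triples $(i,j,k)$ as in Section \ref{sec:TFP}, and recall that the rows of $A$ fall into three groups: the rows of $B$ outside the shared block (the \emph{$\alpha$-rows}), the $D$-rows (shared by $B$ and $C$), and the rows of $C$ outside the shared block (the \emph{$\gamma$-rows}); on an $\alpha$-row the entry in column $(i,j,k)$ equals the $B$-entry in column $(i,j)$ and is independent of $k$, on a $\gamma$-row it is independent of $j$, and on a $D$-row it depends only on $i$.

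First I would fix the data. Given generic counts $w = (w^i_{jk})$ for the toric fiber product model, set $u^i_j := \sum_k w^i_{jk}$, $v^i_k := \sum_j w^i_{jk}$, and $n^i := \sum_{j,k} w^i_{jk}$; these are the marginal data for $B$, $C$, and $D$, and $u_+ = v_+ = n_+ = w_+$. Let $\hat p(B)$, $\hat p(C)$, $\hat p(D)$ be the corresponding MLEs, which exist by hypothesis and are positive. The key structural input I would establish next is the compatibility
\[
\sum_j \hat p^i_j(B) \;=\; \hat p^i(D) \;=\; \sum_k \hat p^i_k(C) \qquad \text{for all } i .
\]
This holds because the $D$-rows lie in $\mathrm{rowspan}(B)$ (the homogeneity hypothesis on $I$), so the Birch equations $Bu = w_+ B\hat p(B)$ already force the $D$-marginal of $\hat p(B)$ to match the empirical $D$-sufficient statistics $Dn/n_+$; since that marginal lies in $\mathcal{M}_D$ and $\hat p(D)$ is the unique such point, they agree, and symmetrically for $C$.

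Next I would check that $q^i_{jk} := \hat p^i_j(B)\,\hat p^i_k(C)/\hat p^i(D)$ lies in $\mathcal{M}_A$. Summing and using the display gives $\sum_{j,k} q^i_{jk} = \hat p^i(D)$, hence $\sum_{i,j,k} q^i_{jk} = \sum_i \hat p^i(D) = 1$. For model membership, write $\hat p(B) = \phi^B(\hat a, \hat d)$ and $\hat p(C) = \phi^C(\hat d', \hat c)$ with positive parameters, where $\hat a, \hat c$ sit on the $\alpha$- and $\gamma$-rows and $\hat d, \hat d'$ on the $D$-rows. Because $D$ has linearly independent columns, $\phi^D$ is injective on the positive orthant, and since $\phi^D(\hat d) = \hat p(D) = \phi^D(\hat d')$ by the display, we get $\hat d = \hat d'$. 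The factorization of the cTFP parametrization then yields $\phi^A_{ijk}(\hat a, \hat d, \hat c) = \phi^B_{ij}(\hat a,\hat d)\,\phi^C_{ik}(\hat d,\hat c)/\phi^D_i(\hat d) = q^i_{jk}$, so $q \in \phi^A(\mathbb{R}_{>0}^{\bullet}) \cap \Delta = \mathcal{M}_A$.

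Finally I would verify $Aw = w_+ Aq$ block by block. On an $\alpha$-row, $(Aw)$ collapses to $(Bu)$ and $(Aq)$ collapses to $(B\hat p(B))$ after summing out $k$ (using $\sum_k \hat p^i_k(C) = \hat p^i(D)$ to cancel the denominator), so equality is Birch for $B$; the $\gamma$-rows are symmetric; and on a $D$-row, $(Aw)$ collapses to $(Dn)$ while $(Aq)$ collapses to $(D\hat p(D))$ via $\sum_{j,k} q^i_{jk} = \hat p^i(D)$, so equality is Birch for $D$. As every row of $A$ is one of these three types and $\mathbf 1 \in \mathrm{rowspan}(A)$, the hypotheses of Theorem \ref{birchthm} are met, and uniqueness of the MLE forces $q$ to be the MLE of the toric fiber product, which is the claimed formula. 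I expect the main obstacle to be the compatibility display: cleanly justifying that marginalizing the MLE of the larger model returns the MLE of the $D$-model requires knowing that the $D$-grading rows lie in $\mathrm{rowspan}(B)$ and $\mathrm{rowspan}(C)$ (exactly the homogeneity hypothesis) and that $D$-marginalization maps $\mathcal{M}_A$ into $\mathcal{M}_D$; the remainder is bookkeeping with the block structure of $A$.
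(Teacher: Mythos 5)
This theorem is cited from \cite{amendola2020maximum} and not proved in the paper, so there is no in-paper argument to compare against; I evaluate your attempt on its own terms. Your overall route is the standard one: define the candidate $q^i_{jk} := \hat p^i_j(B)\hat p^i_k(C)/\hat p^i(D)$, check it lies in $\mathcal{M}_A$, verify $Aw = w_+ A q$ block by block, and invoke uniqueness from Birch. The compatibility display and the block-wise Birch bookkeeping (the $\alpha$-, $\gamma$-, and $D$-rows, using $\sum_k \hat p^i_k(C) = \hat p^i(D)$ to collapse the $k$-sum, etc.) are all correct and are exactly what is needed.

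There is, however, a genuine gap in the model-membership step. You write $\hat p(B) = \phi^B(\hat a, \hat d)$ and $\hat p(C) = \phi^C(\hat d', \hat c)$ and then assert ``$\phi^D(\hat d) = \hat p(D) = \phi^D(\hat d')$ by the display,'' deducing $\hat d = \hat d'$ from injectivity of $\phi^D$. Neither half of this is right. The display gives $\sum_j \hat p^i_j(B) = \hat p^i(D)$, not $\phi^D_i(\hat d) = \hat p^i(D)$: writing $\phi^B_{(i,j)}(\hat a,\hat d) = \bigl(\prod_\ell \hat a_\ell^{\alpha_{\ell,(ij)}}\bigr)\phi^D_i(\hat d)$, one has $\sum_j \hat p^i_j(B) = \phi^D_i(\hat d)\cdot\sum_j\prod_\ell \hat a_\ell^{\alpha_{\ell,(ij)}}$, and the extra factor need not equal $1$ for an arbitrary lift of $\hat p(B)$ through $\phi^B$ (e.g.\ in the two-binary independence model, take $\hat a_j = 2\hat p_{\cdot j}$, $\hat d_i = \hat p_{i\cdot}/2$). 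Second, linear independence of the columns of $D$ makes $D^T$ surjective, hence $\phi^D$ surjective onto the positive orthant, not injective; injectivity would require linearly independent \emph{rows}, which fails whenever $D$ is taller than wide (only in the cTFP case $D=I$ do both hold). The step is repairable without identifying $\hat d$ with $\hat d'$: using the factorization $\phi^A_{(i,j,k)}(a,d,c) = \bigl(\prod_\ell a_\ell^{\alpha_{\ell,(ij)}}\bigr)\phi^D_i(d)\bigl(\prod_n c_n^{\gamma_{n,(ik)}}\bigr)$, pick any $\tilde d$ with $\phi^D(\tilde d)=\hat p(D)$ (possible since $\hat p(D)\in\mathcal M_D$) and set $d'' := \hat d\,\hat d'/\tilde d$ coordinate-wise; then $\phi^A(\hat a, d'', \hat c) = q$ and $q\in\mathcal M_A$ follows. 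With this patch your argument is complete.
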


\begin{corollary}\cite{amendola2020maximum}
With the models and ideals defined as in the previous theorem, the ML-degree of the log-linear model associated to $I \times_D J$ is the product of the ML-degrees of the models associated to $I$ and $J$.
\end{corollary}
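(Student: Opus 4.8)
The plan is to derive the multiplicativity of ML-degree directly from the coordinate formula for the MLE given in Theorem \ref{amenmle}. The key observation is that the ML-degree counts complex critical points of the log-likelihood function for generic data, and Theorem \ref{amenmle} already produces an explicit rational expression for the MLE of the TFP model in terms of the MLEs of the factor models. So the strategy is: first understand what the critical points of the TFP model look like in terms of critical points of the two factor models, then count.

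First I would set up the critical-point correspondence. Let $u = (u^i_{jk})$ be generic data for the TFP model associated to $I \times_D J$. Following the construction in Theorem \ref{amenmle}, one obtains ``marginalized'' data vectors $v$ for the model of $B$ and $w$ for the model of $C$ by summing $u$ appropriately over the glued coordinates (these are the data vectors for which the coordinatewise formula $\hat p^i_j(B)\hat p^i_k(C)/\hat p^i(D)$ is stated), and these marginals are themselves generic when $u$ is. The essential claim is that the map sending a pair $(q, s)$, where $q$ is a complex critical point of the log-likelihood of the $B$-model at data $v$ and $s$ is a complex critical point of the $C$-model at data $w$, to the point with coordinates $q^i_j s^i_k / (\text{common } i\text{-degree coordinate})$ is a bijection onto the complex critical points of the log-likelihood of the TFP model at data $u$. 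Theorem \ref{amenmle} gives exactly this correspondence on the level of the distinguished (real, positive) MLE; the content here is that it extends to all complex critical points, which follows because the critical equations of the TFP model factor through Birch's equations $\mathcal{A}u = u_+ \mathcal{A}p$: the $\mathcal{A}$-matrix of the TFP is built by stacking $\alpha$, the shared block $\beta = \overline\beta$, and $\gamma$, so the linear conditions $\mathcal{A}_{\text{TFP}} u = u_+ \mathcal{A}_{\text{TFP}} p$ decompose into the Birch conditions for $B$ (after marginalization) and for $C$, coupled only through the shared multidegree block $D$.

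Next I would count, taking into account multiplicities. Since the correspondence $(q,s) \mapsto p$ above is a bijection between the critical locus of the TFP model and the product of the critical loci of the two factor models, and since for generic data each of these critical loci is zero-dimensional and reduced with the expected number of points (by definition of ML-degree, for generic data the critical points are simple), we get $\#\{\text{critical points of TFP}\} = \#\{\text{critical points of } B\} \cdot \#\{\text{critical points of } C\}$, i.e. $\operatorname{mldeg}(I \times_D J) = \operatorname{mldeg}(I)\cdot\operatorname{mldeg}(J)$. The hypothesis that $D$ has linearly independent columns is what guarantees the formula of Theorem \ref{amenmle} is valid and that the $\hat p^i(D)$ in the denominator are nonvanishing generically, so no critical points are lost or created in the denominator.

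The main obstacle I anticipate is the genericity bookkeeping: one must check that generic data $u$ for the TFP model pushes forward to generic data $(v,w)$ for the two factor models simultaneously (so that both factor critical loci have their generic cardinalities and are reduced), and conversely that the bijection does not collapse multiplicities or miss points lying on the boundary where the denominator $\hat p^i(D)$ vanishes. This is handled by noting that the marginalization map on data is a surjective linear map, so the preimage of a non-generic locus is non-generic, and by invoking the linear independence of the columns of $D$ to control the denominators. Everything else is a formal consequence of Theorem \ref{amenmle} and the definition of ML-degree; in fact, since the statement is attributed to \cite{amendola2020maximum} as a corollary of Theorem \ref{amenmle}, I would keep the argument brief and largely defer to the reasoning just sketched.
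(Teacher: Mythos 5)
The paper does not actually prove this corollary; it is stated and attributed to \cite{amendola2020maximum}, so there is no in-paper argument to compare against. Your proposal, however, is a correct sketch of the standard argument and is essentially the route taken in the cited source: use the coordinatewise MLE formula $\hat p^i_{jk} = \hat q^i_j \hat s^i_k / \hat r^i$ as a template, show it gives a bijection between pairs of complex critical points of the two factor models (at the marginalized data) and complex critical points of the TFP model, and count.

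The one place where your sketch elides a genuinely substantive step is surjectivity of the correspondence. You justify the forward direction and injectivity via the Birch-type linear equations and marginalization, but to show every complex critical point $p$ of the TFP model arises from a pair $(q,s)$, you need that any point of $V(I \times_D J)$ with nonvanishing $D$-marginal factors as $p^i_{jk} = q^i_j s^i_k / r^i$ with $q \in V(I)$, $s \in V(J)$. This is a structural fact about toric fiber products (it is essentially what the fiber-product parametrization of $V(I\times_D J)$ provides, cf.\ \cite{sullivant2007toric}), and it is not a consequence of the linear Birch equations alone; you invoke only the linear decomposition and never explicitly use that the critical point must also lie on the variety. A second smaller point: the claim that for generic $u$ the complex critical points of all three models are simple is true but is itself a nontrivial genericity statement that deserves a citation rather than being folded into the definition of ML-degree, since the definition only says ``counted with multiplicity.'' With those two points made precise, the argument is sound and matches the intended proof.
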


\section{Quasi-Independence Models as Toric fiber Products}
In this section, we provide two equivalent necessary and sufficient conditions under which the $\mathcal{A}$-matrix of a quasi-independence model is a coordinate toric fiber product of two lower-order quasi-independence model. We first introduce some new notation that we will use throughout this section.

\begin{definition}
    Let $S$ be a set of $k$-tuples that specify the $k$-way quasi-independence model $\mathcal{M}_S$. Let $s$ be any k-tuple in $S$ of the form $(i_1, \ldots i_{k})$. Fix $j \in [k]$ and let $a \subset [k]$ with $j \in a$. 
    Let $A := (i_\ell)_{\ell \in a}$ and $B := (i_\ell)_{\ell \in ([k] \setminus a) \cup \{j\}}$ In other words, $A$ is the portion of the tuple $s$ indexed by elements of $a$ and $B$ is the complementary portion of the tuple including the $j$th coordinate as well.
    We then say that $s$ is a \textbf{$j$-coordinate concatenation} of $A$ and $B$, according to $a$ using the notation $s = A +^{a}_j B.$ We say that $A$ and $B$ form a \textbf{$j$-coordinate split} of $s$ according to $a$.

    We may equivalently record the indices of the entries of $A$ as $\mathbf{in}(A)$ and write $S = A +_j^{\mathrm{in}(A)} B$. Note that we do not need to include the data of $\mathrm{in}(B)$ since it is entirely determined by $\mathrm{in}(A)$ and $j$. When we have fixed $\mathrm{in}(A)$, we use the notation $s = A +_j B$ for simplicity.
\end{definition}


We define these $j$-coordinate splits as they give us candidates for coordinate toric fiber factors. For example, if we have the tuple $(v,w,x,y,z)$, we can choose $A = (v,x,y)$ and $B = (w,x,z)$. Then, $\mathrm{in}(A)$ is $(1,3,4)$ and $\mathrm{in}(B)$ is $(2,3,5)$ and $(v,w,x,y,z) = A +_3 B$. 

We denote by $\#_S(x)$, the number of times an element $x$ occurs in the multiset $S$; we call this the \emph{frequency} of $x$ in $S$.
We are now ready to state the first necessary and sufficient condition below. This is a combinatorial characterization of the coordinate partitions of $S$ that realize it as a TFP of two lower-order models.

\begin{proposition}\label{mainlemma}
    Let $S$ be a set of k-tuples that specify the quasi-independence model $\mathcal{M}_S$. Fix a $j$-coordinate split of $S$ so that $S$ is of the form:
    $\{A_1 +_j B_1, A_2 +_j B_2, \ldots, A_n +_j B_n\}.$
   Define the multisets
    $S^1 = \{\{A_1, A_2, \ldots, A_n\}\}$ and
    $S^2 = \{\{B_1, B_2, \ldots, B_n\}\}.$
    Denote by $S_1$ and $S_2$, the sets formed from the elements of $S^1$ and $S^2$, respectively, without repetition. Then, $S$ is the TFP of $S_1$ and $S_2$ along the $j$th coordinate if and only if:
    \begin{itemize}
        \item for every $A_p$ and $A_r$ in $S^1$ with equal $j$th coordinates, $\#_{S^1}(A_p) = \#_{S^1}(A_r)$, and
        \item for every $B_p$ and $B_r$ in $S^2$ with equal $j$th coordinates, $\#_{S^2}(B_p) = \#_{S^2}(B_r)$.
    \end{itemize}
    $S$ is then a cTFP if and only if this is true for at least one $j\leq k$ and choice of $j$-coordinate split of $S$.
\end{proposition}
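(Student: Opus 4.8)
The plan is to show that the two bulleted frequency conditions are precisely what is needed for the combinatorial gluing of $S_1$ and $S_2$ along the $j$th coordinate to reproduce $S$ exactly, as multisets and hence as $\mathcal{A}$-matrices. Recall from the construction in Section~\ref{sec:TFP} that the cTFP of $S_1$ and $S_2$ along coordinate $j$ consists of all tuples $A +_j B$ with $A \in S_1$, $B \in S_2$, and the $j$th coordinate of $A$ equal to the $j$th coordinate of $B$. Call this set $S'$. Since $S \subseteq S'$ always holds (every $A_p +_j B_p$ appears because $A_p \in S_1$, $B_p \in S_2$ share their $j$th coordinate), the entire content of the proposition is the reverse inclusion $S' \subseteq S$, together with the statement that the multiplicities match — but in the quasi-independence setting $S$ is a genuine set, so the point is really that $S' = S$ as sets. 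The final sentence then follows immediately: $S$ is a cTFP (via \emph{some} valid split) exactly when this holds for at least one choice of $j$ and $j$-coordinate split, which is the definition of cTFP applied to Proposition~\ref{mainlemma}'s characterization of a single split.

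First I would fix a value $v$ of the $j$th coordinate and restrict attention to the ``fiber'' over $v$: let $S^1_v = \{\{A_p : (A_p)_j = v\}\}$ and $S^2_v = \{\{B_p : (B_p)_j = v\}\}$ as sub-multisets of $S^1, S^2$, and let $S_{1,v}, S_{2,v}$ be the underlying sets. Because $S$ is a set, within the block indexed by $v$ the pairs $(A_p, B_p)$ are distinct, so $S^1_v \times S^2_v$ restricted to the ``diagonal'' pairs appearing in $S$ is an injection into $S_{1,v} \times S_{2,v}$. The gluing $S'$ restricted to the block $v$ is exactly the full product $S_{1,v} \times S_{2,v}$. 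So $S' = S$ holds iff, for every $v$, the set of pairs $(A_p, B_p)$ occurring in $S$ over $v$ equals the full rectangle $S_{1,v} \times S_{2,v}$. A clean way to package this: the number of pairs in $S$ over $v$ is $\#(S^1_v) = \#(S^2_v) = n_v$ (the number of columns of $\mathcal{A}_S$ with $j$th coordinate $v$), while the rectangle has $|S_{1,v}| \cdot |S_{2,v}|$ elements. The occurrences form the full rectangle iff the natural projections $S_v \to S_{1,v}$ and $S_v \to S_{2,v}$ have all fibers of equal size — i.e., each $A \in S_{1,v}$ is paired with the \emph{same} number of $B$'s (namely $|S_{2,v}|$), and symmetrically. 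But the number of $B$'s paired with a fixed $A$ in the block $v$ is exactly $\#_{S^1}(A)$, and the number of $A$'s paired with a fixed $B$ is $\#_{S^2}(B)$. Hence $S' = S$ over block $v$ iff $\#_{S^1}(\cdot)$ is constant on $\{A : A_j = v\}$ and $\#_{S^2}(\cdot)$ is constant on $\{B : B_j = v\}$. Ranging over all $v$ gives exactly the two bulleted conditions.

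For the converse direction of this equivalence I would argue that if the two frequency conditions hold, then over each block $v$ the bipartite ``incidence'' relation between $S_{1,v}$ and $S_{2,v}$ given by the pairs in $S$ is biregular, and a biregular bipartite relation whose total edge count equals $\#(S^1_v)$ — which we also compute as $|S_{1,v}| \cdot d_1 = |S_{2,v}| \cdot d_2$ where $d_1, d_2$ are the two common frequencies — must be the complete bipartite relation once one checks $d_1 = |S_{2,v}|$ and $d_2 = |S_{1,v}|$ (this uses only the double-counting identity $|S_{1,v}| d_1 = |S_{2,v}| d_2$ together with the fact that $d_1 \le |S_{2,v}|$ and $d_2 \le |S_{1,v}|$, since a fixed $A$ can be paired with at most $|S_{2,v}|$ distinct $B$'s). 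Therefore $S$ contains the full rectangle over $v$, so $S \supseteq S'$; combined with $S \subseteq S'$ this gives $S = S'$, i.e. $S$ is the claimed TFP. The main obstacle is bookkeeping rather than depth: one must be careful that $S$ being a set (no repeated tuples) is what forces the ``edge count equals $\#(S^1_v)$ equals $\#(S^2_v)$'' identity and lets us conclude the relation is complete bipartite, and one must handle the degenerate cases (empty blocks, or a value $v$ that is the $j$th coordinate of tuples in $S_1$ but — vacuously — this can't happen since every $A_p$ came from an actual tuple). Once the single-split equivalence is established, the last sentence of the proposition is immediate from the definition of cTFP as ``a TFP arising from the gluing procedure'', quantified over all $j$ and all $j$-coordinate splits.
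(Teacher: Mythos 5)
Your forward direction and the overall strategy (work fiber‐by‐fiber over each value $v$ of the shared coordinate and reduce to showing the pairing is the full rectangle $S_{1,v}\times S_{2,v}$) are the same as the paper's, but stated more explicitly. The gap is in the converse. You assert that the double‐counting identity $|S_{1,v}|\,d_1=|S_{2,v}|\,d_2$ together with the trivial bounds $d_1\le |S_{2,v}|$ and $d_2\le |S_{1,v}|$ forces $d_1=|S_{2,v}|$ and $d_2=|S_{1,v}|$. That implication is false: take $|S_{1,v}|=|S_{2,v}|=4$ and $d_1=d_2=2$; the identity and bounds hold but $d_1\ne |S_{2,v}|$. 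In graph terms, a biregular bipartite graph need not be complete bipartite — an $8$-cycle on a $4+4$ vertex set is $2$-biregular but is not $K_{4,4}$.

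Moreover, this is not merely a presentational hole: the converse as stated is actually false, so no counting argument can rescue it. Concretely, let $k=3$, $j=2$, split as $A=(i_1,i_2)$, $B=(i_2,i_3)$, and take
\[
S=\{(1,1,1),(1,1,2),(2,1,2),(2,1,3),(3,1,3),(3,1,4),(4,1,4),(4,1,1)\}.
\]
Then $\#_{S^1}(A_p)=2$ for every $A_p$ and $\#_{S^2}(B_p)=2$ for every $B_p$, so both bulleted conditions hold, yet the cTFP of $S_1=\{(i,1):1\le i\le 4\}$ and $S_2=\{(1,\ell):1\le \ell\le 4\}$ consists of all $16$ triples $(i,1,\ell)$, which strictly contains $S$. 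Equivalently, this split violates the swap condition of Theorem~\ref{mainthm}: $(1,1,1)$ and $(3,1,3)$ lie in $S$ but $(1,1,3)$ does not. The paper's own proof of the converse contains the same unjustified leap (``hence each such $A_p$ will concatenate with each such $B_r$''), so the fix is not to patch the counting but to replace the frequency conditions with something genuinely equivalent to completeness of each fiber, such as the swap condition of Theorem~\ref{mainthm}.
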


First we illustrate this with the example $S$ as in Example \ref{eg1}.
We wish to find coordinate toric fiber factors. Recall that this set is the cTFP along the second coordinate of the following sets of pairs:
\begin{align}
S_1 = \{&(1,1), (1,3), (2,1), (2,2), (3,3)\} \nonumber \\
S_2 = \{&(1,1), (1,3), (2,1), (3,2), (3,3)\} \nonumber
\end{align}
When we form the TFP, we take each $(i,j)$ from $S_1$ and associate it to $(j,k)$ from $S_2$ to give $(i,j,k)$ in $S$. In order to `factorize' $S$ back into sets of pairs, we can attempt to reverse this logic. That is, we split each $(i,j,k)$ along the middle coordinate into $(i,j)$ and $(j,k)$. This results in the following multisets of pairs:
\begin{align}
S^{1} = &\{\{(1, 1), (1, 1), (1, 3), (1, 3), (2, 1), (2, 1), (2, 2), (3, 3), (3, 3)\}\}\nonumber\\
S^{2} = &\{\{(1, 1), (1, 3), (3, 2), (3, 3), (1, 1), (1, 3), (2, 1), (3, 2), (3, 3)\}\}\nonumber
\end{align}
Removing repetitions from these multisets yields models that are the ‘factors' in the toric fiber product to produce $S$. 
The repetitions arise from the fact that $\#_{S^1}(i_1,i_2)$ is equal to the number of pairs in $S^2$ that have $i_2$ as the first coordinate. Hence, in order for the corresponding sets $S_1$ and $S_2$ (without repetitions) to be valid toric fiber factors, we must have $\#_{S^1}(i_1,i_2) = \#_{S^1}(i_1',i_2)$ for each $i_2$ and each pair of $i_1$ and $i_1'$. Similarly, the second bullet point in Proposition \ref{mainlemma} must hold.

Now, consider a different splitting of these triples. For instance, instead of splitting $(i,j,k)$ into $(i,j)$ and $(j,k)$, let us choose to split it into $(i,j)$ and $(i,k)$. Then, we have the following multisets:
\begin{align}
\overline{S^1} = &\{\{(1, 1), (1, 1), (1, 3), (1, 3), (2, 1), (2, 1), (2, 2), (3, 3), (3, 3)\}\}\nonumber\\
\overline{S^{2}} = &\{\{(1, 1), (1, 3), (1, 2), (1, 3), (2, 1), (2, 3), (2, 1), (3, 2), (3, 3)\}\}.\nonumber
\end{align}
Then this splitting does not satisfy the splitting conditions given in Proposition \ref{mainlemma}; for instance, $\#_{\overline S^2}(1,1) = 1 \neq 2 = \#_{\overline S^2}(1,3)$. One can check that $S$ is not the cTFP of $\overline{S_1}$ and $\overline{S_2}$ along the first coordinate.

We are now in a position to provide a proof for the proposition above.

\begin{proof}[Proof of Proposition \ref{mainlemma}]
 Assume that $S$ is a cTFP along the $j$th coordinate. Then, if there exists an $A_p$ and $A_r$ that share the coordinate corresponding to the $j$th coordinate in the set of k-tuples, $\#_{S^1}(A_p)$ corresponds to the number of $B_i$ in the $S^2$ that also have the same value of the original $j$th coordinate. Hence, $\#_{S^1}(A_r) = \#_{S^1}(A_p)$. Similarly, the second bullet point above must hold.
    
    Conversely, let us assume that the conditions stated in the bullet points above hold. Then, the number of $A_p$ in $S^1$ with $j$th coordinate of value $i_j$ is equal to the number of $B_r$ in $S^2$ with $i_j$ as the $j$th coordinate. Hence, each such $A_p$ will concatenate with each such $B_r$ once to form an element of $S$, of the form $A_p +_j B_r$, with $j$th coordinate $i_j$, rendering $S$ as the cTFP of $S^1$ and $S^2$ along the $j$th coordinate. 

    Then, $S$ is a cTFP if and only if this holds for at least one value of $j$, for one splitting.
\end{proof}

In order to motivate the second equivalent condition, we first consider an example of a set of triples which cannot be realized as a cTFP of two multisets of pairs.

\begin{example}
  Consider the following set of triples:
  $S = \{(1,2,1), (1,2,2), (1,1,2), (2,2,2)\}$.
When we split this set of triples along each of the three coordinates, we get, in turn, the following multisets of pairs, where we underline the coordinate that we are splitting along:
\begin{enumerate}
\item $\{\{(\underline{1},2), (\underline1,2), (\underline1,1), (\underline2,2)\}\}$,  $\{\{(\underline1,1), (\underline1,2), (\underline1,2), (\underline2,2)\}\}$
\item $\{\{(1,\underline2), (1,\underline2), (1,\underline1), (2,\underline2)\}\}$,  $\{\{(\underline2,1), (\underline2,2), (\underline1,2), (\underline2,2)\}\}$
\item $\{\{(1,\underline1), (1,\underline2), (1,\underline2), (2,\underline2)\}\}$,  $\{\{(2,\underline1), (2,\underline2), (1,\underline2), (2,\underline2)\}\}$.
\end{enumerate}
None of the three splittings result in valid toric fiber factors, by the conditions given in Propositions \ref{mainlemma}. 

If we consider splitting along the first coordinate, we see that there are $2$ occurrences of $(1,2)$ but only $1$ of $(1,1)$ in the first set of pairs. Similarly, in the second set there is $1$ occurrence of $(1,1)$ but two of $(1,2)$. This from the fact that the triple $(1, 1, 1)$ does not belong to $S$. Indeed, if $(1,1,1)$ were an element of $S$, we would have another copy of $(1,1)$ in the first and second multisets, which would render this a valid cTFP. Similar logic holds for the other two cases.
\end{example}
This example leads to the observation that if $A_1 +_j B_1$ and $A_2 +_j B_2$ occur in $S$, then $A_1 +_j B_2$ and $A_2 +_j B_1$ must as well. Theorem \ref{mainthm} makes this observation precise.

\begin{theorem} \label{mainthm}
    Let $S$ be a set of $k$-tuples. Choose a $j$-coordinate split of $S$ that is associated to some $\mathrm{in(A)}$ and $\mathrm{in(B)}$. Let $S_1 = \{A_1,\dots,A_r\}$ and $S_2 = \{B_1,\dots,B_p\}$ be the sets of tuples resulting from this $j$-coordinate split. Then $S$ is a cTFP according to this $j$-coordinate split if and only if whenever $s_1 = A_1 +^{in(A)}_j B_1$ and $s_2 = A_2 +^{in(A)}_j B_2$ are elements of $S$ with the same $j$th coordinate, the tuples
    $ s_3 = A_1 +^{in(A)}_j B_2$ and $s_4 := A_2 +^{in(A)}_j B_1$ also belong to $S$.

    Then, S is a cTFP if this holds for at least one value of $j \leq k$ and some $j$-coordinate splitting. 
\end{theorem}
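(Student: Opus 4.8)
The plan is to leverage Proposition \ref{mainlemma}, which already characterizes when a $j$-coordinate split realizes $S$ as a cTFP in terms of frequency conditions on the multisets $S^1$ and $S^2$. So the strategy is to show that the "rectangle-closure" condition stated in Theorem \ref{mainthm} (whenever $A_1 +_j B_1$ and $A_2 +_j B_2$ lie in $S$ with a common $j$th coordinate, then $A_1 +_j B_2$ and $A_2 +_j B_1$ do too) is equivalent to the two bullet conditions of Proposition \ref{mainlemma}. Since the final sentence ("$S$ is a cTFP if this holds for at least one $j$ and some splitting") follows immediately once the per-splitting equivalence is established — it is exactly the last line of Proposition \ref{mainlemma} restated — the real content is the per-splitting biconditional, and the closing statement is then a one-line corollary.

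First I would fix a $j$-coordinate split and introduce notation: for a value $i_j$ in the $j$th coordinate slot, let $\mathcal{A}(i_j) = \{A_p \in S^1 : A_p \text{ has } j\text{th coordinate } i_j\}$ and similarly $\mathcal{B}(i_j) \subseteq S^2$, both regarded as sub-multisets. The key observation, which I would state as a small lemma or inline fact, is that an element $A_p +_j B_r$ (with $A_p, B_r$ sharing $j$th coordinate $i_j$) belongs to $S$ precisely when the pair was "glued" in forming the split; and $\#_{S^1}(A_p)$ counts how many distinct $B_r \in \mathcal{B}(i_j)$ concatenate with $A_p$ to give an element of $S$. With this dictionary, the rectangle condition says exactly: for each $i_j$, the bipartite "compatibility" relation between $\mathcal{A}(i_j)$ and $\mathcal{B}(i_j)$ given by "$A_p +_j B_r \in S$" is a complete bipartite relation — every $A$ pairs with every $B$ in that slot. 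I would prove the forward direction (cTFP $\Rightarrow$ rectangle) by noting a cTFP by construction glues every $A_p \in \mathcal{A}(i_j)$ with every $B_r \in \mathcal{B}(i_j)$, so $s_3, s_4 \in S$ automatically; and the reverse direction (rectangle $\Rightarrow$ frequency bullets) by observing that completeness of the compatibility relation forces every $A_p \in \mathcal{A}(i_j)$ to have the same frequency $\#_{S^1}(A_p) = |\mathcal{B}(i_j)|$ (number of distinct $B$'s in that slot), which is the first bullet of Proposition \ref{mainlemma}, and symmetrically for the second bullet; then Proposition \ref{mainlemma} gives that $S$ is the cTFP along this split.

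The step I expect to require the most care is the bookkeeping around multiplicities versus distinct elements: $S^1$ and $S^2$ are multisets, the rectangle condition is phrased on elements of $S$ (which is a set), and $S_1, S_2$ are the de-duplicated versions. I need to make sure that "$\#_{S^1}(A_p)$ equals the number of distinct $B_r$ in $\mathcal{B}(i_j)$" is genuinely correct — this relies on the fact that distinct elements $B_r, B_{r'} \in S_2$ with the same $j$th coordinate produce distinct concatenations $A_p +_j B_r \neq A_p +_j B_{r'}$, and conversely that each occurrence of $A_p$ in the multiset $S^1$ came from a distinct element of $S$, hence from a distinct $B$. I would spell this out carefully since it is exactly the subtlety the preceding examples are warning about. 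Once that dictionary is nailed down, the equivalence with Proposition \ref{mainlemma}'s bullets is essentially immediate, and the final "$S$ is a cTFP if this holds for at least one $j$ and some splitting" is just the definition of cTFP combined with the per-splitting equivalence, so I would dispatch it in a single sentence.
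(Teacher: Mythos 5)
Your forward direction is fine and matches the paper. The concern is the route you take for the converse: you pass from the rectangle condition to the frequency bullets of Proposition \ref{mainlemma} and then invoke that proposition's ``if'' direction to conclude $S$ is a cTFP. But the ``if'' direction of Proposition \ref{mainlemma}, read literally, is not actually sound: equality of frequencies \emph{within} a slot does not by itself force $S$ to contain every concatenation, because it does not force the common frequency to equal the size of the complementary slot. Concretely, take
\[
S = \{(1,1,1),\, (2,1,2),\, (3,1,1),\, (4,1,2)\},
\]
split along $j=2$ into $A=(i_1,i_2)$ and $B=(i_2,i_3)$. Then $S^1 = \{\{(1,1),(2,1),(3,1),(4,1)\}\}$ (each of frequency $1$) and $S^2 = \{\{(1,1),(1,2),(1,1),(1,2)\}\}$ (each of frequency $2$), so both bullets hold; yet the cTFP of $S_1$ and $S_2$ along the middle coordinate has $4\cdot 2 = 8$ triples and $S$ has only $4$. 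The rectangle condition, correctly, also fails here ($(1,1,1)$ and $(2,1,2)$ share $j$th coordinate $1$ but $(1,1,2)\notin S$), so Theorem \ref{mainthm} gives the right answer --- it is only the intermediate appeal to Proposition \ref{mainlemma} that breaks.

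The good news is that your own ``dictionary'' already contains the correct argument and you should promote it to be the argument rather than a stepping stone. You observe that the rectangle condition is equivalent to: for each value $i_j$, the compatibility relation on $\mathcal{A}(i_j)\times\mathcal{B}(i_j)$ defined by ``$A_p +_j B_r\in S$'' is complete bipartite. But \emph{completeness of that relation for every $i_j$ is literally the statement that $S$ equals the cTFP of $S_1$ and $S_2$} along the $j$th coordinate, since the cTFP is by definition the set of all such concatenations. So you should argue ``rectangle closure $\Leftrightarrow$ each slot's compatibility relation is complete bipartite $\Leftrightarrow$ $S$ is the cTFP'' directly, never detouring through the frequency counts. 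This is in fact the same two-sided direct argument that the paper gives (its proof of Theorem \ref{mainthm} does not rely on Proposition \ref{mainlemma} either). The multiplicity bookkeeping you flagged --- distinct $B_r$ in a slot yield distinct concatenations, and each occurrence of $A_p$ in $S^1$ arises from a distinct element of $S$ --- is exactly right and is what makes the ``complete bipartite'' reformulation legitimate; keep that lemma, but route it to the conclusion directly.
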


\begin{proof} Let $S$ be a set of k-tuples that is associated to a $k$-way quasi-independence model $\mathcal{M}_S$. Then, let $S$ contain the elements $A_1 +_j B_1$ and $A_2 +_j B_2$ such that the two agree in value at the $j$th coordinate. Consider splitting $S$ along the $j$th coordinate into $S_1$ and $S_2$. Then:
    \begin{align}
        &A_1 \text{ and } A_2 \in S_1 \nonumber \\
        &B_1 \text{ and } B_2 \in S_2 \nonumber
    \end{align}
    If $S_1$ and $S_2$ are cTFFs of $S$ that are associated along the $j$th coordinate, then $A_1$ must also be associated to $B_2$ and $A_2$ is also associated to $B_1$. That is, $A_1 +_j B_2$ and $A_2 +_j B_1$ must also be in $S$. 
    
    Conversely, if $S$ containing the elements $A_1 +_j B_1$ and $A_2 +_j B_2$ always implies that $S$ contains $A_1 +_j B_2$ and $A_2 +_j B_1$, we can always split $S$ along the $j$th coordinate into $S_1$ and $S_2$, such that:
    \begin{align}
        &A_1, A_2, A_1 \text{ and } A_2 \in S_1 \nonumber \\
        &B_1, B_2, B_1 \text{ and } B_2 \in S_2 \nonumber
    \end{align}
    That is, $S$ can be split along the $j$th coordinate into valid cTFFs. 
    Then, $S$ is a cTFP if and only if the above holds for some value of $j$ and some splitting.
\end{proof}

We have provided two necessary and sufficient combinatorial conditions for a $k$-way model to be a toric fiber product along one of the coordinates of a given parametrization. As all cTFPs of quasi-independence models can be formed this way, we have provided conditions for a parameterization of a $k$-way quasi-independence model $(k \geq 3)$ to be a cTFP.

\section{2-Way Quasi-Independence Models as Iterated Toric fiber Products}

Here we prove that \emph{every} $2$-way quasi-independence model with rational MLE can be realized as a toric fiber product of linear ideals. We provide a reparameterization of the $2$-way model  and demonstrate that this reparameterization renders the model a TFP. This parameterization is motivated by the known form of the rational MLE characterized by Coons and Sullivant \cite[Theorem~5.4]{coons2021quasi}. The linear factors in the numerator and denominator of this rational function are determined by certain complete bipartite subgraphs of the graph $\mathcal{G}_S$ which we describe below. We then illustrate these concepts in Example \ref{poseteg}.

\begin{definition}
    A set of indices $C = \{i_1, \ldots, i_r\} \times \{j_1, \ldots, j_s\}$ is a \textbf{clique} in $S$ if for all $\alpha \in [r]$ and $\beta \in [s]$, $(i_\alpha , j_\beta ) \in S$. 
A clique in $S$ corresponds to a complete bipartite subgraph of $\mathcal{G}_S$. 
    Let $(i,j)$ be an index in $S$; for brevity, we also represent this pair by $ij$. Then, $\mathbf{Max}(ij)$ is the set of all containment-maximal cliques in $S$ that contain $ij$. The set $\mathbf{Int}(ij)$ is the set of all containment-maximal pairwise intersections of elements of $\mathrm{Max}(ij)$. Similarly, $\mathrm{Max}(S)$ is defined to be the set of all maximal cliques in $S$ and $\mathrm{Int}(S)$ is the set of all maximal intersections of elements of $\mathrm{Max}(S)$. 
\end{definition}

Next we use this clique structure to construct the poset that naturally arises from the rows of the maxmimal cliques, as constructed in \cite[Section~6]{coons2021quasi}. Denote by $\mathrm{rows}(D)$ the set of rows in the star matrix of $S$ that belong to a clique $D$; that is, 
\[\mathrm{rows}(D) = \{i \mid (i,j) \in D \text{ for some } j \in [n] \}.\] 
Similarly, we write
\[
\cols(D) = \{j \mid (i,j) \in D \text{ for some } i \in [m] \}.
\]
We also use this notation to refer to the support of a row or column of the star matrix; that is, if $i \in [m]$,
\[
\cols(i) := \{j \mid (i,j) \in S \}
\]
and we similarly define $\rows(j)$ for $j \in [n]$.
We define the poset $P_S$ whose ground set is $\mathrm{Max}(S)$ and whose elements are ordered by containment of rows. The relation $D < E$ is a cover relation if and only if $D \cap E \in \mathrm{Int}(S)$ \cite[Proposition~6.8]{coons2021quasi}. We note that the proofs and constructions in \cite{coons2021quasi} refer to the subposet $P_S(j)$ consisting of maximal cliques with $j$ in their columns; however, the proofs of all relevant results about $P_S$ are exactly analogous to those in \cite{coons2021quasi}.

Recall from Theorem \ref{janethm}, that a quasi-independence model has ML-degree one if and only if the associated bipartite graph is doubly chordal. In this case, the associated poset $P_S$ is a tree, as shown in \cite[Proposition~6.2]{coons2021quasi}. The poset is not typically graded, but the tree structure allows us to assign to each element of $\mathrm{Max}(S)$ a \emph{level} so that $D < E$ is a cover relation if and only if $\level(D) + 1 = \level(E)$ and so that the minimal level of any maximal clique is 1. We say that a maximal intersection $C \in \mathrm{Int}(S)$ has level $r$ if it is the intersection of two maximal cliques of levels $r$ and $r+1$. 

Define the $0/1$ vectors $a_i$ and $b_j$ to be the the indicator vectors for the $i$th row and $j$th column of the star matrix associated to $S$, respectively. In other words, $a_i$ is $i$th row vector of the first block of $\mathcal{A}_S$ and $b_j$ is the  $j$th row vector of the second block of $\mathcal{A}_S$. We will illustrate these concepts through the following example.  

\begin{example}\label{poseteg}
Let $S=\{11, 12, 13, 21, 22, 24, 25, 31, 32, 44, 45, 55\}$. 
   The  star matrix that represents the corresponding $2$-way quasi-independence model $\mathcal{M}_S$ and its associated bipartite graph $\mathcal{G}_S$ are depicted in Figure \ref{fig:RunningExample}.

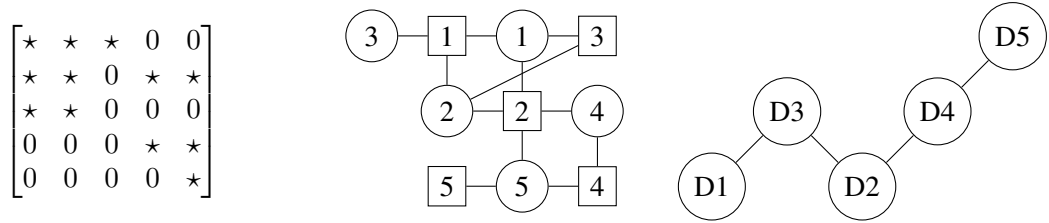
\begin{figure}[!htb]\label{fig:RunningExample}
    \centering
    \begin{minipage}{.3\textwidth}
        \centering
         
       $\begin{bmatrix}
        \star & \star & \star& 0 & 0\\
        \star & \star & 0 & \star &\star\\
        \star & \star & 0 & 0 & 0 \\
        0 & 0 & 0 & \star & \star \\
        0 & 0 & 0 & 0 & \star
        \end{bmatrix}$
    \end{minipage}%
    \begin{minipage}{0.3\textwidth}
    \centering
    \begin{tikzpicture}[every node/.style={minimum size=0.5cm}]
\node [shape=rectangle, draw=black] (A1) at (1,2) {1};
\node [shape=circle, draw=black] (B1) at (2,2) {1};
\node [shape=rectangle, draw=black] (A2) at (2,1) {2};
\node [shape=circle, draw=black] (B2) at (1,1) {2};
\node [shape=circle, draw=black] (B3) at (0,2) {3};
\node [shape=rectangle, draw=black] (A3) at (3,2) {3};
\node [shape=rectangle, draw=black] (A4) at (3,0) {4};
\node [shape=circle, draw=black] (B4) at (3,1) {4};
\node [shape=rectangle, draw=black] (A5) at (1,0) {5};
\node [shape=circle, draw=black] (B5) at (2,0) {5};

\path [-] (A1) edge (B1);
\path [-] (A1) edge (B2);
\path [-] (A1) edge (B3);
\path [-] (A2) edge (B1);
\path [-] (A2) edge (B2);
\path [-] (A2) edge (B4);
\path [-] (A2) edge (B5);
\path [-] (A3) edge (B1);
\path [-] (A3) edge (B2);
\path [-] (A4) edge (B4);
\path [-] (A4) edge (B5);
\path [-] (A5) edge (B5);
            
        \end{tikzpicture}
        \end{minipage}
        \begin{minipage}{0.3\textwidth}
         \begin{tikzpicture} [every node/.style={minimum size=0.4cm}]
        \node [shape=circle, draw=black] (D1) at (0,0) {D1};
        \node [shape=circle, draw=black] (D3) at (1,1) {D3};
        \node [shape=circle, draw=black] (D2) at (2,0) {D2};
        \node [shape=circle, draw=black] (D4) at (3,1) {D4};
        \node [shape=circle, draw=black] (D5) at (4,2) {D5};
        \path [-] (D1) edge (D3);
        \path [-] (D2) edge (D3);
        \path [-] (D2) edge (D4);
        \path [-] (D4) edge (D5);
    \end{tikzpicture}
        \end{minipage}
        \caption{The star matrix, bipartite graph and poset corresponding to $S$ in Example \ref{poseteg}.}
\end{figure}

    We find that there are five maximal cliques: $ D_1 = \{1\} \times \{1,2,3\};
        D_2 = \{2\} \times \{1,2,4,5\}; 
        D_3 = \{1,2,3\} \times \{1,2\}; 
        D_4 = \{2,4\} \times \{4,5\}; 
        D_5 = \{2,4,5\} \times \{5\}.  $
     The maximal intersections of the cliques are: $D_1 \cap D_3 = \{11,12\};
    D_2 \cap D_3= \{21,22\};
    D_2 \cap D_4 = \{24,25\};
    D_4 \cap D_5 = \{25,45\}.$
   The poset $P_S$ on ground set $\{D_1, D_2, D_3, D_4,D_5\}$ ordered by inclusion of rows is also pictured in Figure \ref{fig:RunningExample}.
   
    In this case, the cliques $D_1$ and $D_2$ have level 1, $D_3$ and $D_4$ have level 2 and $D_5$ has level 3. So the level 1 maximal intersections are $D_1 \cap D_3$, $D_2 \cap D_3$ and $D_2 \cap D_4$. The level 2 maximal intersection is $D_4 \cap D_5.$

    The $\Acal$-matrix is as follows:

    { 
     \[\Acal_S = \begin{blockarray}{ccccccccccccc}
     &11 &12 &13 &21 &22 &24 &25 &31 &32 &44 &45 &55\\ 
     \begin{block}{c(cccccccccccc)}
 a_1& 1 & 1 & 1 & 0 & 0 & 0 & 0 & 0 & 0 & 0 & 0 & 0\\
 a_2& 0 & 0 & 0 & 1 & 1 & 1 & 1 & 0 & 0 & 0 & 0 & 0\\
  a_3& 0 & 0 & 0 & 0 & 0 & 0 & 0 & 1 & 1 & 0 & 0 & 0\\
   a_4& 0 & 0 & 0 & 0 & 0 & 0 & 0 & 0 & 0 & 1 & 1 & 0\\
  a_5& 0 & 0 & 0 & 0 & 0 & 0 & 0 & 0 & 0 & 0 & 0 & 1\\
  \cline{2-13}
  b_1 & 1 & 0 & 0 & 1 & 0 & 0 & 0 & 1 & 0 & 0 &0 &0 \\
  b_2 & 0 & 1 & 0 & 0 & 1 & 0 & 0 &0 & 1 & 0 &0 &0 \\
 b_3 & 0 & 0 & 1 & 0 & 0 & 0& 0 & 0 & 0 & 0 & 0 & 0\\
  b_4 & 0 & 0 & 0& 0 & 0 & 1 & 0 & 0 & 0 & 1 & 0 & 0 \\
  b_5 & 0 & 0 & 0 & 0 & 0 & 0 & 1& 0 & 0 & 0 & 1 & 1\\
 \end{block}
\end{blockarray} \ .\]}
\end{example}

The following proposition shows that the partial order on $\max(S)$ induced by reverse inclusion of columns is the same as $P_S$.

\begin{proposition}\label{prop:ReverseInclusionOfColumns}
    Let $D, E \in \max(S)$. Then $\rows(D) \subset \rows(E)$ if and only if $\cols(E) \subset \cols(D)$. Hence $P_S$ is equivalently ordered by reverse inclusion of columns.
\end{proposition}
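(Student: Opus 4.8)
The plan is to prove the biconditional by showing that each inclusion of row sets forces the reverse inclusion of column sets, using maximality of the cliques, and then conclude the final sentence about $P_S$ from the fact that an order and its reverse-inclusion-of-columns counterpart have the same cover relations.

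First I would prove the forward direction: assume $\rows(D) \subseteq \rows(E)$ and show $\cols(E) \subseteq \cols(D)$. Pick $j \in \cols(E)$. Since $D$ and $E$ are cliques, $D = \rows(D) \times \cols(D)$ and $E = \rows(E) \times \cols(E)$ as rectangles of indices contained in $S$. I want to show $j \in \cols(D)$. The idea is to use maximality of $D$: consider the rectangle $\rows(D) \times (\cols(D) \cup \{j\})$. For this to contradict maximality of $D$, I need it to be a clique in $S$, i.e. $(i,j) \in S$ for every $i \in \rows(D)$. Since $\rows(D) \subseteq \rows(E)$ and $\rows(E) \times \{j\} \subseteq E \subseteq S$ (because $j \in \cols(E)$), indeed $(i,j) \in S$ for all $i \in \rows(D)$. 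Hence $\rows(D) \times (\cols(D) \cup \{j\})$ is a clique containing $D$; by maximality of $D$ it equals $D$, forcing $j \in \cols(D)$. This gives $\cols(E) \subseteq \cols(D)$.

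The reverse direction is symmetric: assume $\cols(E) \subseteq \cols(D)$ and take $i \in \rows(D)$; the rectangle $(\rows(E) \cup \{i\}) \times \cols(E)$ is a clique in $S$ containing $E$ (using $\cols(E) \subseteq \cols(D)$ and $\{i\} \times \cols(D) \subseteq D \subseteq S$), so by maximality of $E$ it equals $E$ and $i \in \rows(E)$, giving $\rows(D) \subseteq \rows(E)$. Finally, for the last sentence: $P_S$ was defined as the order on $\max(S)$ given by inclusion of rows, with the stated cover relations. The biconditional just proved shows this order coincides, as a partial order on the same ground set $\max(S)$, with the order given by reverse inclusion of columns; identical partial orders have identical Hasse diagrams, so the cover-relation description via $\int(S)$ carries over verbatim and $P_S$ is "equivalently ordered by reverse inclusion of columns."

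I do not anticipate a serious obstacle here; the only point requiring a little care is the implicit fact that a clique $D \in \max(S)$ is genuinely the full rectangle $\rows(D) \times \cols(D)$ (so that maximality is maximality among index rectangles contained in $S$), which follows from the definition of clique together with containment-maximality. I would state this explicitly at the start of the proof so the two maximality arguments go through cleanly.
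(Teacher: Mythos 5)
Your proof is correct and essentially identical to the paper's argument: both directions hinge on the observation that, given $\rows(D) \subseteq \rows(E)$, any column $j$ of $E$ could be adjoined to the columns of $D$ to form a larger clique (because each pair $(i,j)$ with $i \in \rows(D) \subseteq \rows(E)$ already lies in $E \subseteq S$), contradicting maximality of $D$. The only cosmetic difference is that you phrase this directly while the paper phrases it as a proof by contradiction; the underlying argument and use of maximality are the same.
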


\begin{proof}
    Let $D$ and $E$ be maximal cliques in $S$ such that $\rows(D) \subset \rows(E)$. 
    Assume for contradiction that $j$ is a column of $E$ that is not a column of $D$.  This implies that there exists some $i$ in the rows of $D$ such that $(i,j) \notin S$. Indeed, if no such $i$ existed, then we could add $j$ to the columns of $D$ to create a larger clique, contradicting maximality of $D$. Since $\rows(D) \subset \rows(E)$ holds, we also have that $i$ is in the rows of $E$. Since $E$ is a clique and $j$ is a column of $E$, we have that $(i,j)\in S$, which is a contradiction. The proof of the other direction is analogous.
\end{proof}

We use these maximal clique, maximal intersection and poset constructions to prove our main result.

\begin{theorem}\label{2waythm} 
    Every $2$-way quasi-independence model has a paramterization that is a coordinate toric fiber product. It then follows that the model is a toric fiber product of linear ideals. 
\end{theorem}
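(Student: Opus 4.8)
The plan is to first dispense with the hypothesis: since the second assertion forces $\Mcal$ to have ML-degree $1$, we may assume $\mathcal G_S$ is doubly chordal bipartite, so by Theorem~\ref{janethm} and \cite[Proposition~6.2]{coons2021quasi} the poset $P_S$ on $\max(S)$ is a tree with a well-defined level function and with cover relations given by maximal intersections. We then argue by recursion on the tree $P_S$, peeling off a leaf clique at each step. The essential point is that the \emph{given} $2$-way parametrization $\Acal_S$ admits no useful cTFP split along its own two coordinates: any $j$-coordinate split of a $2$-tuple fixes the first coordinate, so the hypotheses of Theorem~\ref{mainthm} hold vacuously and the resulting factorization is degenerate. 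Thus, to apply Theorem~\ref{mainthm} nontrivially, we must first \emph{reparametrize} $\Mcal_S$ as a higher-way quasi-independence model whose $\Acal$-matrix $\Acal'_S$ carries extra ``internal'' blocks, one for each maximal intersection of $P_S$, along which the split is genuine.

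For the reparametrization I would enlarge $\Acal_S$ by rows indexed by the maximal intersections, mirroring the denominator of the Coons--Sullivant formula $\hat p_{ij} = \tfrac{1}{u_+}\,\tfrac{\prod_{D\in\max(ij)} u_D}{\prod_{C\in\int(ij)} u_C}$ of \cite[Theorem~5.4]{coons2021quasi}: for each maximal intersection $C=\rows(C)\times\cols(C)$, which is an edge of the tree $P_S$ splitting it into two sides, add a block whose states record which side of $C$ a cell $(i,j)\in S$ lies on; within each leaf clique $D=\rows(D)\times\cols(D)$ one keeps the ordinary independence-model coordinates. The key fact to verify is that every new row already lies in $\mathrm{rowspan}(\Acal_S)$, so that $\Acal'_S$ defines the \emph{same} log-linear model $\Mcal_S$; this follows from the observation used in the proof of Proposition~\ref{prop:ReverseInclusionOfColumns}, namely that for any clique $R\times T$ in $S$ the indicator of the columns supported on the rows $R$ equals $\sum_{i\in R}a_i$ and the indicator of those supported on the columns $T$ equals $\sum_{j\in T}b_j$, both of which are rows of $\Acal_S$, so arbitrary Boolean combinations of such indicators are available. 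With $\Acal'_S$ in hand, one picks a leaf $D$ of $P_S$ and its unique neighbor $E$, sets $C=D\cap E$, and checks the concatenation criterion of Theorem~\ref{mainthm} along the internal block attached to $C$: writing each cell of $S$ as $A+_j B$ with $A$ the portion inside $D$ and $B$ the complementary portion, the doubly chordal structure of $\mathcal G_S$ (clique closure of the relevant subgraph) guarantees that $A_1+_jB_1,\ A_2+_jB_2\in S$ with equal $j$th coordinate implies $A_1+_jB_2,\ A_2+_jB_1\in S$. Hence $\Mcal_S$ is a cTFP of the independence model $\Mcal_{S_D}$ on the leaf clique and the model $\Mcal_{S'}$ obtained by deleting from $S$ the cells of $D$ lying outside $C$; one verifies that $S'$ has tree $P_{S'}=P_S\setminus\{D\}$ and is again doubly chordal, so the recursion continues.

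Finally, the base of the recursion is $\lvert\max(S)\rvert=1$, where $S$ is a single clique and $\Mcal_S$ is the two-variable independence model; this model is decomposable and is a well-known iterated toric fiber product of linear ideals (its building blocks being full simplices, which have the zero ideal), realized by the same flavour of reparametrization, so it fits the scheme of the previous paragraph (cf.\ \cite{sullivant2007toric}). Running the recursion, $\Mcal_S$ becomes an iterated cTFP whose factors are leaf-clique independence models and, ultimately, linear ideals; by the identification of cTFP of $\Acal$-matrices with TFP of vanishing ideals, $I$ is an iterated toric fiber product of linear ideals. Because the leaves of $P_S$ may be stripped in any order, and each stripping order exhibits a distinct internal block of $\Acal'_S$ as the gluing coordinate, one obtains the ``along each of its internal coordinates'' statement recorded in the introduction, and the consequence for iterative proportional scaling then follows from \cite{coons2022rational}. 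The step I expect to be the main obstacle is the explicit construction of $\Acal'_S$ together with the simultaneous verification that its added rows lie in $\mathrm{rowspan}(\Acal_S)$ \emph{and} that the frequency/concatenation conditions of Proposition~\ref{mainlemma} and Theorem~\ref{mainthm} hold along every internal block; a secondary subtlety is ensuring the independence-model base case decomposes into genuinely linear factors rather than into a single determinantal quadric, which is precisely why a reparametrization is needed even there.
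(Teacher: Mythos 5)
Your central idea---reparametrize $\Mcal_S$ by adding extra blocks built from the maximal intersections of the clique tree $P_S$, then invoke Theorem~\ref{mainthm} along these internal blocks---is exactly the paper's strategy, and you correctly spot that the theorem statement implicitly requires rational MLE. However, the proposal contains one substantive gap and two places where the construction is too loose to be checked.

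The gap is in the claim that the indicator vectors of the maximal intersections lie in $\mathrm{rowspan}(\Acal_S)$ because ``arbitrary Boolean combinations of such indicators are available.'' That is false: the rowspan yields only $\mathbb{R}$-linear combinations, and linear combinations of $\sum_{i\in R}a_i$ and $\sum_{j\in T}b_j$ do not produce the indicator of $R\times T$ (the sum is $2$ on $R\times T$, $1$ on the symmetric difference, $0$ outside, and there is no row to subtract off to fix this in general). This step---the paper's Lemma~\ref{lem1}---is in fact the technical heart of the proof. The paper writes $\Ibb_C = \sum_{k\geq 1}\bigl[\sum_{i\in A_k}a_i - \sum_{j\in B_k}b_j\bigr]$ for a recursively defined cascade of neighborhood sets $A_k,B_k$, and the proof that this telescoping series \emph{terminates} uses the doubly chordal hypothesis in an essential way: if some $A_u\cap A_v\neq\emptyset$ with $v-u\geq 2$, one extracts an induced cycle of length $\geq 6$ or a double-square in $\Gcal_S$, a contradiction. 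You do flag ``the rows lie in the rowspan'' as a possible obstacle at the end, but your proposed derivation of it is not merely incomplete, it is incorrect, so the structure of the intended argument falls through here.

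Two secondary issues. First, ``add a block whose states record which side of $C$ a cell lies on'' is too coarse to form a valid partition matrix and to run the concatenation check; the paper's intermediate block $\Bcal_r$ must be carefully populated by the indicator vectors of \emph{all} level-$r$ intersections, together with rows $a_i$ for the uncovered rows $R_r$ and columns $b_j$ for the already-exhausted columns $C_r$, and establishing that this is a partition matrix (supports are disjoint and cover $S$) is Lemma~\ref{lem2}, which again uses the poset structure ($E_i$, $E^j$, and Proposition~\ref{rmk:meetandjoin}). Second, your recursion peels leaves of $P_S$ rather than adding levels; this is a genuinely different decomposition order from the paper (which builds $\Abar_S$ once and checks the cTFP conditions at every internal coordinate $r=1,\dots,h-1$ directly, then peels off $\Bcal_{r+1}$ level-by-level in Theorem~\ref{thm:LinearIdeals}). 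The leaf-peeling order could plausibly work, but you would then need to verify that deleting $D\setminus C$ from $S$ leaves $\Gcal_{S'}$ doubly chordal and $P_{S'}=P_S\setminus\{D\}$, which you assert but do not prove; the paper's all-at-once construction sidesteps this issue entirely.
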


Our goal is to find a new $\mathcal{A}$-matrix, $\Abar_S$, parametrizing $\mathcal{M}_S$ that is a coordinate toric fiber product along all sequential coordinate partitions. The standard $\mathcal{A}$-matrix, $\Acal_S$, of the quasi-independence model will be a submatrix of $\Abar_S$ and the rows that we add will correspond to the indicator vectors of elements if $\int(S)$. Our next step is to show that the indicator vectors of each of these intersections lie within the rowspan of $\mathcal{A}_S$. For any $T \subset S$, let $\Ibb_T \in \{0,1\}^S$ denote the indicator vector for $T$.  We prove this in the following~lemma. 

\begin{lemma}\label{lem1}
Let $S \subset [m] \times [n]$ such that $\Mcal_S$ has ML-degree $1$. Let $C \in \int(S)$. Then $\Ibb_C$ belongs to the rowspan of $\mathcal{A}_S$.
\end{lemma}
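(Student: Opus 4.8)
The plan is to express $\Ibb_C$ as an explicit integer combination of the row vectors $a_i$ and $b_j$ of $\Acal_S$, using the poset structure on $\max(S)$. Write $C = D \cap E$ for maximal cliques $D < E$ with $\level(D) = r$, $\level(E) = r+1$, so that $C \in \int(S)$ has level $r$. By Proposition \ref{prop:ReverseInclusionOfColumns} we have $\rows(D) \subseteq \rows(E)$ and $\cols(E) \subseteq \cols(D)$, and $C = \rows(D) \times \cols(E)$ (this identification of the maximal intersection with a product of a row set and a column set should be recorded first, as it is what makes the indicator vector factor nicely). The natural first guess is that $\Ibb_C$ equals $\sum_{i \in \rows(D)} a_i$ restricted appropriately, but this overcounts: $\sum_{i \in \rows(D)} a_i$ is the indicator of $\{(i,j) \in S : i \in \rows(D)\}$, which is generally strictly larger than $C = \rows(D)\times\cols(E)$. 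So the first key step is to understand, for $i \in \rows(D)$, exactly which columns $j$ with $(i,j)\in S$ fail to lie in $\cols(E)$, and to subtract them off.

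The second, and main, step is the combinatorial heart: I claim that for $i \in \rows(D)$, the set $\{j : (i,j)\in S\} \setminus \cols(E)$ is itself a disjoint union of column-sets of maximal cliques lying "above" or "beside" $E$ in the tree $P_S$ — more precisely, that the tree structure from \cite[Proposition~6.2]{coons2021quasi} together with double chordality forces the star matrix to have a nested block structure along the rows of $D$. Concretely, I would argue that $\sum_{i \in \rows(D)} a_i - \sum_{j \in \cols(D)\setminus\cols(E)} b_j$ has support exactly $C$ and all entries in $\{0,1\}$; proving the entries never exceed $1$ and never go negative is where the doubly-chordal hypothesis (equivalently, no induced $6$-cycle and no double-square) must be invoked, since it is precisely these forbidden subgraphs that would create a column $j \in \cols(D)\setminus\cols(E)$ incident to two distinct rows of $\rows(D)$ in a way that breaks the count. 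This is the step I expect to be the main obstacle: pinning down the exact cancellation and verifying it is clean requires a careful case analysis on how $\cols(D)\setminus\cols(E)$ distributes among the rows of $D$, using that $D$ and $E$ are maximal and that their intersection is a \emph{maximal} intersection (a cover relation in $P_S$).

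An alternative, possibly cleaner route to the same end: induct on the level $r$, or on the size of the tree $P_S$. One can peel off a leaf of $P_S$ — say a maximal clique $F$ of top level — and relate $\int(S)$ for $S$ to that of the smaller set $S'$ obtained by deleting the rows or columns unique to $F$; the indicator vectors of intersections not involving $F$ lie in the rowspan of $\Acal_{S'}$ by induction (and $\Acal_{S'}$ embeds into $\Acal_S$ after padding with zeros), while the one or two new intersections involving $F$ are handled by a direct computation of the form above, which is now easier because $F$ is a leaf. I would develop the direct-combination argument first and fall back on the induction only if the global case analysis proves unwieldy. In either case the output is an integer (indeed $0/\pm1$) linear combination of rows of $\Acal_S$ equal to $\Ibb_C$, which is exactly the claim. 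Once Lemma \ref{lem1} is in hand, adjoining these rows $\Ibb_C$ to $\Acal_S$ does not change the rowspan, hence does not change the model, so $\Abar_S$ is a legitimate reparametrization of $\Mcal_S$, as needed for Theorem \ref{2waythm}.
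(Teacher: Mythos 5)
Your proposed one--step formula
\[
\Ibb_C \;\overset{?}{=}\; \sum_{i \in \rows(D)} a_i \;-\; \sum_{j \in \cols(D)\setminus\cols(E)} b_j
\]
is not correct, and the failure is of a different nature than the one you anticipate. The problem is not double--counting within $\rows(D)$: it is that the vectors $b_j$ for $j \in \cols(D)\setminus\cols(E)$ have support on rows \emph{outside} $\rows(D)$, which your formula never touches. Concretely, in the paper's running example (Example~\ref{poseteg}), take $C = D_2 \cap D_3$, so $D = D_2 = \{2\}\times\{1,2,4,5\}$ and $E = D_3 = \{1,2,3\}\times\{1,2\}$. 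Your formula gives $a_2 - b_4 - b_5$, which has entries $-1$ at the columns indexed by $44$, $45$, and $55$, whereas the correct expression is $a_2 - b_4 - b_5 + a_4 + a_5$. You must add back $a_i$ for the new rows ($A_2 = \{4,5\}$ here) that the subtracted $b_j$ accidentally hit, and then those $a_i$ may introduce yet more unwanted columns that need to be subtracted, and so on. The paper's proof makes exactly this iteration precise: it recursively defines alternating sets $A_k$ and $B_k$ of rows and columns radiating outward from $C = A_1\times B_0$ and shows
\[
\Ibb_C = \sum_{k\ge 1}\Bigl[\sum_{i\in A_k} a_i - \sum_{j\in B_k} b_j\Bigr].
\]
The doubly--chordal hypothesis is then used not to prevent overcounting among rows of $D$ (as you suggest), but to prove that the sets $A_k$ are pairwise disjoint, hence eventually empty, so that the telescoping sum is finite. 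That argument is genuinely combinatorial: a repeated element of some $A_u$ and $A_v$ with $v-u\ge 2$ is shown to force an induced cycle of length $\ge 6$ or an induced double--square, contradicting Theorem~\ref{janethm}. Your ``nested block structure'' claim is gesturing at something real, but as stated it does not produce a correct formula and it misidentifies where double chordality enters. The inductive alternative you sketch (peeling a leaf of $P_S$) is plausible and is closer in spirit to what would be needed, but it is undeveloped; to make it work you would still need a base formula for $\Ibb_C$ when $C$ involves a leaf, and that formula is essentially the same alternating sum, just with fewer terms.
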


Before we prove this lemma, we illustrate the procedure for constructing these indicator vectors of as linear combinations of the rows of $\Acal_S$ in our running example.

\begin{example}\label{poseteg2}
Let S be as in Example \ref{poseteg}.
Consider the maximal intersection $D_1 \cap D_3$. Its indicator vector, $\mathbb{I}_{D_1 \cap D_3}$, has a $1$ in the columns indexed by $11$ and $12$ and $0$ elsewhere. The vector $a_1$ is supported on these two columns but also on $13$. In order to get to the indicator vector, we can subtract $b_3$, which is only supported on $13$, from $a_1$. This results in the desired indicator vector $\mathbb{I}_{D_1 \cap D_3}$. We can use this procedure on the other maximal intersections to write down the indicator vectors:
\begin{align}
    \mathbb{I}_{D_1 \cap D_3} & = a_1 - b_3 \nonumber \\
    \mathbb{I}_{D_2 \cap D_3} & = a_2 - b_4 -b_5 + a_4 + a_5\nonumber \\
    \mathbb{I}_{D_2 \cap D_4} & = b_4 + b_5 -a_4 -a_5 \nonumber \\
    \mathbb{I}_{D_4 \cap D_5} & = b_5 - a_5 \nonumber 
\end{align}

\end{example}

\begin{proof}[Proof of Lemma \ref{lem1}] 
Consider a clique $C = A_1 \times B_0$ such that $C \in \int{S}$.
We can then recursively define for each $k$:
\begin{align*}
    \mathrm{B}_k &= \{j | j \notin \mathrm{B}_{k-1} \text{ and there exists an } i \in \mathrm{A}_{k} \text{ such that } (i,j) \in S\} \\
    \mathrm{A}_{k} &= \{i | i\notin \mathrm{A}_{k-1} \text{ and there exists a } j \in \mathrm{B}_{k-1} \text{ such that } (i,j) \in S\}.
    \end{align*}
    Then we claim that the indicator vector of the clique is equal to
    \begin{equation}\label{indvec}
         \sum_{k=1}^\infty \left[\sum_{i  \in \mathrm{A}_k}a_i - \sum_{j \in \mathrm{B}_k}b_j\right].
    \end{equation}

   In order for this sum to be finite, we need some $\mathrm{A}_k$ or $\mathrm{B}_k$ to be empty. Indeed, if $\mathrm{A}_k$ is empty, then so is $\mathrm{B}_k$ and all $\mathrm{A}_\ell$ and $\mathrm{B}_\ell$ for $\ell > k$. Similarly, if $\mathrm{B}_{k-1}$ is empty, then so are all $\mathrm{A}_\ell$ and $\mathrm{B}_\ell$ for $\ell > k$. 
   We show that all the sets $\mathrm{A}_k$ are disjoint, and hence that one of them is eventually empty as $[m]$ is finite.
   
   First note that for all $k$, $A_k \cap A_{k+1} = B_k \cap B_{k+1} = \emptyset$ by construction. For the sake of contradiction, suppose that $\mathrm{A}_u \cap \mathrm{A}_v$ is non-empty with $v-u \geq 2$. We further assume that $v-u$ is minimal over all  pairs with this property. We will ultimately reach a contradiction by showing that $\mathcal{M}_S$ has ML-degree greater than one by finding an induced cycle or double-square graph in $\mathcal{G}_S$. 
 
    By construction, we have that if $i \in A_k$, then the neighborhood of $a_i$ in $\Gcal_S$ is $B_{k-1} \cup B_k$. Similarly, if $j \in B_k$, then the neighborhood of $b_j$ in $\Gcal_S$ is $A_k \cup A_{k+1}.$ Let $i \in A_u \cup A_v$. Then, there exists a $j \in B_{v-1}$ such that $(i,j) \in S$. Hence, $j$ belongs to $B_{u-1} \cup B_u$, since $i \in A_u$ and $j$ is a neighbor of $i$. If $j \in B_u$, then $B_{v-1} \cap B_u \neq \emptyset$, which contradicts the minimality of $v-u$. So $j \in B_{u-1}$ and $B_{v-1} \cap B_{u-1} \neq \emptyset$. Similarly, since there exists a $j \in B_{u-1} \cap B_{v-1}$, we have that $A_{u-1} \cap B_{u-1} \neq \emptyset.$ Repeatedly applying this argument yields that $B_0 \cap B_{v-u}$ is nonempty.

    Let $j_{v-u} \in B_0 \cap B_{v-u}$. Then by definition, there exist $i_w \in A_w$ and $j_w \in B_w$ for $w$ between $1$ and $v-u$ with $i_wj_w \in S$ for all such $w$ and $i_wj_{w-1} \in S$ for $2 \leq w \leq v-u$. This forms a path of length $2(v-u) -1$ in $\Gcal_S$. But $i_1 \in A_1$ and $j_{v-u} \in B_0$. Since $A_1 \times B_0$ is a clique, we have that $i_1j_{v-u} \in S$ as well. So these vertices form a cycle of length $2(v-u)$. The cycle is chordless by minimality of $v-u$. If $v-u \geq 3$, this contradicts that $\Gcal_S$ is doubly chordal.

    Now suppose $v-u = 2$, so that the subgraph induced by $\{i_1, i_2, j_1, j_2\}$ is a cycle of length $4$. We assumed that $A_1 \times B_0$ is a maximal intersection of maximal cliques. By construction, one of these cliques is $A_1 \times (B_0 \cup B_1)$. The other is of the form $(\tilde{A} \cup A_1) \times B_0$ for some $\tilde{A}$ disjoint from $A_1$. 
    
    By maximality of the intersection $A_1 \times B_0$, we see that the subgraph of $\Gcal_S$ induced by $\tilde{A}$ and $B_1$ is empty.  So in particular, $A_2 \cap \tilde{A} = \emptyset$ and $i_2$ does not belong to $\tilde{A}$. By maximality of the clique $(\tilde{A} \cup A_1) \times B_0$, there must exist a $j' \in B_0$ such that $i_2j' \not\in S$. Moreover, for any $i' \in \tilde{A}$, we have that $i'j_2, i'j' \in S$ by $i'j_1 \not\in S$. So the subgraph of $\Gcal_S$ induced by $\{j', j_1, j_2, i', i_1, i_2\}$ is a double-square. This contradicts that $\Gcal_S$ is doubly chordal bipartite.

    Therefore, $A_k$ and $B_k$ are only nonempty for finitely many $k$.  So the sum in Equation \ref{indvec} is finite and is equal to the indicator vector for $C$ by the construction of $A_k$ and $B_k$. 
\end{proof}

\begin{example}
    To illustrate this proof, consider the maximal intersection $D_2 \cap D_3$ in Example \ref{poseteg}. We have that $D_2 \cap D_3 = \{2\} \times \{1,2\}$, so $A_1 = \{2\}$ and $B_0 = \{1,2\}.$ The columns in row $2$ that are not in $B_0$ are $4$ and $5$, so $B_1 = \{4,5\}$. The rows of these that are not in $A_1$ are $4$ and $5$, so $A_2 = \{4,5\}$. These rows are only supported on columns in $B_1$, so $B_2 = \empty$ and all subsequent $A_i$ and $B_i$ are empty as well. So we recover that
    \[
    \Ibb_{D_2 \cap D_3} = \sum_{i \in A_1} a_i - \sum_{j \in B_1} b_j + \sum_{i \in A_2} a_i = a_2 - b_4 - b_5 + a_4 + a_5.
    \]
\end{example}

Let $h$ be the greatest level of any element of $P_S$. We use Lemma \ref{lem1} to construct the multipartition matrix $\mathcal{\Abar}_S$ with $h+1$ blocks whose rowspan is the same as that of $\mathcal{A}_S$. We construct $\bar{S}$, the state space of the $(h+1)$-way quasi-independence model associated to $\mathcal{\Abar}_S$, so that $\mathcal{A}_{\bar{S}} = \mathcal{\Abar}_S$. The first and last blocks $\mathcal{B}_0$ and $\mathcal{B}_{h}$ of the multipartition matrix are equal to the first and last blocks of $\mathcal{A}_S$; that is, their rows are $\{a_i \mid i \in [m]\}$ and $\{b_j \mid j \in [n]\}$, respectively. The intermediary blocks each correspond to levels of maximal intersections in the poset $P_S$. 

Let $r \in \{0,\dots, h\}$. In order to define the partition matrix $\Bcal_r$, we first define the following three sets. These will index the rows of $\Bcal_r$.
\begin{enumerate}
    \item First, let $X_r$ be the set of all level $r$ maximal intersections; that is,
    $$X_r := \{D \cap E \mid D \cap E \text{ is a maximal intersection in the $r^{th}$ level of } P_S \}.$$
    Since there are no level $0$ or level $h$ maximal intersections, we have that $X_0$ and $X_h$ are empty.
    \item Let $R_0 := [m]$. For each $r \in [h]$, we define
    $$R_r := R_{r-1} \backslash \{i \mid i \text{ is a row in a maximal clique in the } r^{th} \text{ level of } P_S\}.$$
    Note that $R_h$ is the empty set.
    \item Let $C_0 := \emptyset$. For each $r \in [h]$, we recursively define:
    $$C_r := C_{r-1} \cup \{j \mid j \text{ is a column in a level } r \text{ maximal clique but not a level } (r+1) \text{ maximal clique}\}.$$
    Similarly note that $C_h = [n]$.
\end{enumerate}

\begin{definition}\label{matrix_defn}
    We define the partition matrix $\mathcal{B}_r$ whose columns are indexed by elements of $S$ and whose rows as follows. The first rows are the indicator vectors of each maximal intersection in $X_r$. The following rows are each $a_i$ such that $i \in R_r$ and each $b_j$ such that $j \in C_r$.
\end{definition} 

Note that $\Bcal_0$ and $\Bcal_h$ are the first and second blocks of $\Acal_S$ respectively. We define the multipartition matrix $\Abar_S$ to be comprised of partition matrices $\Bcal_0,\dots,\Bcal_h$ in that order. Let
$\bar{S}$ be the set of $(h + 1)$-tuples corresponding to the quasi-independence model specified by $\Abar_S$. We index the entries of $\bar{S}$ as elements of $R_0 \times L_1 \times \ldots \times L_h \times C_h$, where each $L_i, i \in [h]$ is defined as $X_i \cup R_i \cup C_i$. 
We implement the definitions above to form the multipartition matrix in our running example. 

\begin{example}
 We reparameterize the model in Example \ref{poseteg} as the following $4$-way quasi-independence model.
{ \footnotesize
     \[\begin{blockarray}{ccccccccccccc}
     &11 &12 &13 &21 &22 &24 &25 &31 &32 &44 &45 &55\\ 
     \begin{block}{c(cccccccccccc)}
 a_1& 1 & 1 & 1 & 0 & 0 & 0 & 0 & 0 & 0 & 0 & 0 & 0\\
 a_2& 0 & 0 & 0 & 1 & 1 & 1 & 1 & 0 & 0 & 0 & 0 & 0\\
  a_3& 0 & 0 & 0 & 0 & 0 & 0 & 0 & 1 & 1 & 0 & 0 & 0\\
   a_4& 0 & 0 & 0 & 0 & 0 & 0 & 0 & 0 & 0 & 1 & 1 & 0\\
  a_5& 0 & 0 & 0 & 0 & 0 & 0 & 0 & 0 & 0 & 0 & 0 & 1\\
  \cline{2-13}
  D_1\cap D_3& 1& 1& 0& 0& 0& 0& 0 & 0 & 0 & 0 & 0 & 0\\
  D_2\cap D_3& 0& 0& 0& 1& 1& 0& 0 & 0 & 0 & 0 & 0 & 0\\
  D_2\cap D_4& 0 & 0 & 0 & 0 & 0 & 1 & 1 & 0 & 0 & 0 & 0 & 0\\
  b_3 & 0 & 0 & 1 & 0 & 0 & 0& 0 & 0 & 0 & 0 & 0 & 0\\
  a_3& 0 & 0 & 0 & 0 & 0 & 0 & 0 & 1 & 1 & 0 & 0 & 0\\
   a_4& 0 & 0 & 0 & 0 & 0 & 0 & 0 & 0 & 0 & 1 & 1 & 0\\
  a_5& 0 & 0 & 0 & 0 & 0 & 0 & 0 & 0 & 0 & 0 & 0 & 1\\
  \cline{2-13}
  D_4\cap D_5& 0 & 0 & 0 & 0 & 0 & 0 & 1 & 0 & 0 & 0 & 1 & 0\\ 
    b_1 & 1 & 0 & 0 & 1 & 0 & 0 & 0 & 1 & 0 & 0 &0 &0 \\
  b_2 & 0 & 1 & 0 & 0 & 1 & 0 & 0 &0 & 1 & 0 &0 &0 \\
 b_3 & 0 & 0 & 1 & 0 & 0 & 0& 0 & 0 & 0 & 0 & 0 & 0\\
  b_4 & 0 & 0 & 0& 0 & 0 & 1 & 0 & 0 & 0 & 1 & 0 & 0 \\
  a_5& 0 & 0 & 0 & 0 & 0 & 0 & 0 & 0 & 0 & 0 & 0 & 1\\
  \cline{2-13}
  b_1 & 1 & 0 & 0 & 1 & 0 & 0 & 0 & 1 & 0 & 0 &0 &0 \\
  b_2 & 0 & 1 & 0 & 0 & 1 & 0 & 0 &0 & 1 & 0 &0 &0 \\
 b_3 & 0 & 0 & 1 & 0 & 0 & 0& 0 & 0 & 0 & 0 & 0 & 0\\
  b_4 & 0 & 0 & 0& 0 & 0 & 1 & 0 & 0 & 0 & 1 & 0 & 0 \\
  b_5 & 0 & 0 & 0 & 0 & 0 & 0 & 1& 0 & 0 & 0 & 1 & 1\\
 \end{block}
\end{blockarray}\]}
We can use Theorem \ref{mainthm} to show that this is a cTFP along its second and third coordinates.
\end{example}

We now define important cliques associated to each row and column.
For each $i \in [m]$, define
       \[
       E_i := \{j \in [m] \mid \cols(i) \subset \cols(j) \} \times \cols(i).
       \]
Similarly, for each $j \in [n]$, define
       \[
       E^j := \rows(j) \times \{i \in [n] \mid \rows(j) \subset \rows(i) \}.
       \]
We illustrate this in the case of the running example, Example \ref{poseteg}. We have $E_i = D_i$ for each $i = 1,\dots,5$. We have $E^1 = E^2 = D_3$, $E^3 = D_1$, $E^4 = D_4$ and $E^5 = D_5.$

\begin{proposition}\label{rmk:meetandjoin}
Let $i \in [m]$ and $j \in [n]$. The cliques $E_i$ and $E^j$ belong to $\max(S)$. Moreover,
\begin{enumerate}
    \item for each $D \in \max(S)$ with $i \in \rows(D)$, we have $E_i \leq D$ in $P_S$ and
    \item for each $D \in \max(S)$ with $j \in \cols(D)$, we have $E^j \geq D$ in $P_S$.
\end{enumerate}
\end{proposition}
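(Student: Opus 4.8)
The plan is to prove the three assertions of Proposition \ref{rmk:meetandjoin} in turn, using Proposition \ref{prop:ReverseInclusionOfColumns} to translate freely between containment of rows and reverse containment of columns. First I would show that $E_i \in \max(S)$. By construction $E_i = R \times \cols(i)$, where $R = \{j : \cols(i) \subseteq \cols(j)\}$; this set is a clique in $S$ because for any $j \in R$ and any column $c \in \cols(i)$ we have $c \in \cols(j)$, i.e. $(j,c) \in S$. To see it is maximal, observe first that we cannot add a column: if $c \notin \cols(i)$, then $(i,c) \notin S$ while $i \in R$, so $R \times (\cols(i) \cup \{c\})$ is not a clique. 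We also cannot add a row: any $j'$ that forms a clique with all of $\cols(i)$ satisfies $\cols(i) \subseteq \cols(j')$, hence $j' \in R$ already. The argument for $E^j$ is entirely symmetric, exchanging the roles of rows and columns.

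Next I would prove item (1): if $D \in \max(S)$ with $i \in \rows(D)$, then $E_i \le D$ in $P_S$. Since $i \in \rows(D)$ and $D$ is a clique, every column of $D$ lies in $\cols(i)$, i.e. $\cols(D) \subseteq \cols(i)$. By the definition of $E_i$, its column set is exactly $\cols(i)$, so $\cols(D) \subseteq \cols(E_i)$. By Proposition \ref{prop:ReverseInclusionOfColumns}, reverse inclusion of columns is the same as inclusion of rows in $P_S$, so $\rows(E_i) \subseteq \rows(D)$, which is precisely $E_i \le D$. Item (2) follows by the symmetric argument: if $j \in \cols(D)$ then $\rows(D) \subseteq \rows(j) = \rows(E^j)$, hence $\cols(E^j) \subseteq \cols(D)$ by Proposition \ref{prop:ReverseInclusionOfColumns}, so $E^j \ge D$.

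One subtlety worth flagging: the statement asserts $E_i$ and $E^j$ lie in $\max(S)$, and the subsequent argument for (1) implicitly uses that membership — we need $E_i$ to actually be one of the maximal cliques whose rows order $P_S$. This is handled by the maximality argument above, but I should be careful that $\cols(i)$ is nonempty (equivalently, that row $i$ of the star matrix is not identically zero); if some variable state never co-occurs with any other, one typically discards it, so I would either assume this or note it explicitly. Similarly $\rows(j) \neq \emptyset$ for $E^j$ to be well-defined.

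The main obstacle, such as it is, is purely bookkeeping: making sure the two maximality checks for $E_i$ (no column can be added, no row can be added) are both addressed, since it is tempting to verify only one. There is no deep difficulty here — the proposition is a direct consequence of Proposition \ref{prop:ReverseInclusionOfColumns} together with the observation that $E_i$ is, by design, the maximal clique with column set exactly $\cols(i)$ (and dually for $E^j$), so it is the unique $\le$-minimal maximal clique through row $i$. I would close by remarking that items (1) and (2) identify $E_i$ as the meet in $P_S$ of all maximal cliques containing row $i$ and $E^j$ as the join of all maximal cliques containing column $j$, which is presumably why the proposition is labelled ``meet and join''.
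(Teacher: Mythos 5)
Your proof is correct and follows essentially the same approach as the paper. The only slight variation is that for item (1) the paper argues directly from the maximality of $D$ (every $j \in \rows(E_i)$ has $\cols(D) \subset \cols(j)$, so by maximality $j \in \rows(D)$) rather than passing through Proposition~\ref{prop:ReverseInclusionOfColumns} as you do, but both routes are equivalent one-liners and the paper itself invokes Proposition~\ref{prop:ReverseInclusionOfColumns} for the dual statement about $E^j$.
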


\begin{proof}
We prove these results in the case of $E_i$.
       First, note that all rows that are supported on $\cols(i)$ are included in $E_i$. We cannot add another column because we have included all columns that are supported on row $i$. So $E_i \in \max(S)$.
       
       Let $D \in \max(S)$ with $i \in \rows(D)$. Since $i \in \rows(D)$, we have $\cols(D) \subset \cols(i)$. For all $j \in \rows(E_i)$, we have $\cols(D) \subset \cols(i) \subset \cols(j)$ by definition. So the maximality of clique $D$, $j \in \rows(D)$. So $E_i \leq D$ in $P_S$.

       The analogous results hold for $E^j$ by considering $P_S$ as being ordered by reverse inclusion of columns as described in Proposition \ref{prop:ReverseInclusionOfColumns}.
\end{proof}

\begin{lemma}\label{preplemma}
   Let $x \in [m]$. Let $D_1, D_2 \in \max(S)$ such that $x \in \rows(D_1)$ and $x \in \rows(D_2)$. Let $\level(D_1) = r < s = \level(D_2)$. Then for each $i$ such that $r \leq i \leq s$, there exists a $D \in \max(S)$ such that $x \in \rows(D)$  and $\level(D) = i$.
\end{lemma}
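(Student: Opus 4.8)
The plan is to exploit the tree structure of $P_S$ together with Lemma \ref{lem1}'s geometric content, but more directly: the key fact I would use is that the set of maximal cliques containing a fixed row $x$ forms a \emph{connected} subposet of $P_S$, hence (since $P_S$ is a tree) a subtree, and then argue that along any path in this subtree the levels change by exactly one at each cover. Let me sketch the steps.

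First I would establish connectivity of $\max_x(S) := \{D \in \max(S) \mid x \in \rows(D)\}$ as an induced subposet of $P_S$. Suppose $D_1, D_2 \in \max_x(S)$. By Proposition \ref{rmk:meetandjoin}(1), $E_x \leq D_1$ and $E_x \leq D_2$ in $P_S$, where $E_x = \{j \in [m] \mid \cols(x) \subset \cols(j)\} \times \cols(x)$ is itself a maximal clique containing $x$. So $E_x$ is a common lower bound. Now I claim that for any $D$ with $E_x \leq D$, \emph{every} maximal clique $D'$ on the unique $P_S$-path from $E_x$ to $D$ also contains $x$ in its rows. This is where the tree structure is essential: since cover relations $D' < E'$ in $P_S$ correspond to $\rows(D') \subset \rows(E')$, walking up from $E_x$ to $D$ the row sets are nested along the chain, so $x \in \rows(E_x)$ forces $x \in \rows(D')$ for every $D'$ between them. (More carefully: in a tree, the path from $E_x$ up to $D$ is a chain $E_x = C_0 < C_1 < \cdots < C_\ell = D$, and $\rows(C_0) \subset \rows(C_1) \subset \cdots$, so $x$ persists.) Hence the path from $E_x$ to $D_1$ and the path from $E_x$ to $D_2$ both lie in $\max_x(S)$, and since $P_S$ is a tree these two paths together contain the unique path from $D_1$ to $D_2$; therefore $\max_x(S)$ is connected.

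Next, I would use the level function. Recall from the discussion after Theorem \ref{janethm} that $P_S$ being a tree lets us assign levels so that $D < E$ is a cover relation iff $\level(E) = \level(D)+1$. Given $D_1, D_2 \in \max_x(S)$ with $\level(D_1) = r < s = \level(D_2)$, take the unique path in the tree $\max_x(S)$ between them: $D_1 = C_0, C_1, \dots, C_\ell = D_2$, where consecutive $C_t$ are comparable by a cover relation (up or down). Along this path the level changes by exactly $\pm 1$ at each step, and it starts at $r$ and ends at $s > r$. By the discrete intermediate value theorem, for every integer $i$ with $r \leq i \leq s$ the value $i$ is attained as $\level(C_t)$ for some $t$; that $C_t$ is the desired maximal clique containing $x$ with level $i$.

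The main obstacle I anticipate is justifying rigorously that $\max_x(S)$ is a connected subtree of $P_S$ — specifically the claim that the $P_S$-path between two elements of $\max_x(S)$ stays inside $\max_x(S)$. The clean way is the two-step argument above (common lower bound $E_x$ via Proposition \ref{rmk:meetandjoin}, then nestedness of row sets along maximal chains), but one must be careful that the path between $D_1$ and $D_2$ in a tree really does pass through their meet, which here is dominated below by $E_x$; one should check that the meet of $D_1$ and $D_2$ in $P_S$ (if it exists) still contains $x$, or simply bypass meets and argue directly that the union of the $E_x$-to-$D_1$ and $E_x$-to-$D_2$ paths is a connected subgraph of the tree $P_S$ containing both endpoints, hence contains the $D_1$--$D_2$ path. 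Everything else is bookkeeping with the level function.
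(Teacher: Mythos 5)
Your argument is correct and shares the paper's key ingredients: Proposition \ref{rmk:meetandjoin} gives $E_x$ as a common lower bound of $D_1$ and $D_2$ in $P_S$, and the tree structure together with the level function finishes the job. The paper's own proof is somewhat more economical: from $E_x \le D_1$ it deduces $\level(E_x) \le r$, then takes only the single saturated chain from $E_x$ up to $D_2$ --- every clique on this chain contains $x$ by nestedness of row sets, and its levels run through $\level(E_x), \level(E_x)+1, \dots, s$, which already covers $\{r, \dots, s\}$. Your version re-derives the same conclusion by first proving that $\max_x(S)$ is a connected subtree of $P_S$ and then running a discrete intermediate-value argument along the $D_1$--$D_2$ path; this is sound, but it adds a connectivity detour that the paper avoids by only ever using $D_1$ to bound $\level(E_x)$ from above.
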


\begin{proof}
    By Proposition \ref{rmk:meetandjoin}, we have $E_i \leq D_1, D_2$ in $P_S$. Let $\level(E_i) = t$ so that $t \leq r$. By definition of the level of a clique, we have that the chain from $E_i$ to $D_2$ contains cliques of all levels between $t$ and $s$, so in particular, it contains a clique of level $r$. 
\end{proof}

We can now show that $\mathcal{\Abar}_S$ is a multipartition matrix.

\begin{lemma}\label{lem2}
    Let $\Mcal_S$ be a $2$-way quasi-independence model with rational MLE. The matrix $\mathcal{B}_r$, as defined in Definition \ref{matrix_defn} is a partition matrix, and hence $\mathcal{\Abar}_S$ is a multipartition matrix. 
\end{lemma}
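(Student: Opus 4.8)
The plan is to show that for each $r \in \{0, \dots, h\}$, the columns of $\mathcal{B}_r$ sum to the all-ones vector when restricted to each of the three types of rows taken together, i.e.\ that for every column index $(i,j) \in S$, exactly one row of $\mathcal{B}_r$ has a $1$ in that position. Since $\mathcal{B}_0$ and $\mathcal{B}_h$ are by construction the first and second blocks of $\mathcal{A}_S$, which is a multipartition matrix, we may assume $1 \le r \le h-1$. Fix a column indexed by $(i,j) \in S$. I would split into the two cases determined by whether some maximal intersection in $X_r$ contains $(i,j)$.

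First I would treat the case where $(i,j)$ lies in some level-$r$ maximal intersection $C = D \cap E$ with $\level(D) = r$, $\level(E) = r+1$. Here the indicator vector $\Ibb_C$ contributes a $1$ in column $(i,j)$. I must then show that $i \notin R_r$ and $j \notin C_r$, so that no other row of $\mathcal{B}_r$ hits this column, and also that $C$ is the \emph{unique} element of $X_r$ containing $(i,j)$. For $i \notin R_r$: since $(i,j) \in C \subseteq D$ and $\level(D) = r$, the row $i$ appears in a level-$r$ maximal clique, so $i$ was removed from $R_{r-1}$ in forming $R_r$; one also needs that $i$ was still present in $R_{r-1}$, i.e.\ $i$ did not appear in any maximal clique of level $< r$ — this is exactly where Lemma~\ref{preplemma} is needed, since if $i$ lay in a maximal clique of some lower level it would also lie in one of level $r$ but then it would have been removed earlier, contradiction; more carefully, Lemma~\ref{preplemma} together with Proposition~\ref{rmk:meetandjoin} pins down that the set of levels at which $i$ appears in a maximal clique is an interval, and $R_r$ removes $i$ precisely at the bottom of that interval. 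For $j \notin C_r$: since $j \in \cols(C) \subseteq \cols(E)$ and $\level(E) = r+1$, the column $j$ is a column of a level-$(r+1)$ maximal clique, so by the definition of $C_r$ (columns in a level-$r$ but not level-$(r+1)$ clique, accumulated), $j$ has not yet been added at step $r$; again one uses the column analogue of Lemma~\ref{preplemma} via Proposition~\ref{prop:ReverseInclusionOfColumns} to see that the set of levels of maximal cliques containing column $j$ is an interval, with $E^j$ at the top, so $j$ enters $C_r$ at the top of that interval, which is $\ge r+1$. Uniqueness of $C$ in $X_r$: if $C' = D' \cap E' \in X_r$ also contained $(i,j)$, then $E_i \le D, D'$ and $E^j \ge E, E'$ in $P_S$, and since $P_S$ is a tree and both $D,D'$ have level $r$ lying on the chain from $E_i$ up to $E^j$, we get $D = D'$; similarly $E = E'$, so $C = C'$.

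Next I would treat the case where $(i,j)$ lies in no element of $X_r$. Then the only possible contributions come from the rows $a_i$ (if $i \in R_r$) and $b_j$ (if $j \in C_r$). I claim exactly one of these occurs. Using the interval description above: $i$ appears in maximal cliques of a contiguous range of levels $[\mathrm{lo}_i, \mathrm{hi}_i]$ and is in $R_r$ iff $r < \mathrm{lo}_i$ (wait — rather $R_r$ retains $i$ for $r < \mathrm{lo}_i$ and drops it once $r \ge \mathrm{lo}_i$; I would double-check the off-by-one against Definition of $R_r$, which removes at the level where $i$ first appears). Likewise $j$ is in $C_r$ iff $r \ge \mathrm{hi}_j$ where $[\mathrm{lo}_j,\mathrm{hi}_j]$ is the range of levels of maximal cliques containing column $j$. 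The key geometric fact tying these together is that $E_i \le E^j$ in $P_S$ whenever $(i,j) \in S$ (since $(i,j) \in S$ forces $\cols(i) \subseteq \cols(E_i)$ containing $j$... more directly, $E_i$ is the bottom clique containing row $i$ and $E^j$ the top clique containing column $j$, and $(i,j)\in S$ means they lie on a common chain with $E_i \le E^j$), so $\mathrm{lo}_i = \level(E_i) \le \level(E^j) = \mathrm{hi}_j$, and because $(i,j)$ lies in no level-$r$ maximal intersection, $r$ is not an ``internal'' level strictly between $\level(E_i)$ and $\level(E^j)$ where $i$ and $j$ coexist in cliques — precisely, the levels $r$ with $i \in$ some level-$r$ clique and $j \in$ some level-$r$ clique would force $(i,j)$ into a level-$r$ intersection; so every level $r$ is either $< $ the coexistence range (giving $i \in R_r$, $j \notin C_r$) or $\ge$ it (giving $j \in C_r$, $i \notin R_r$), and exactly one holds.

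The main obstacle, and the part requiring the most care, is exactly this last coordination: translating ``$(i,j)$ lies in no level-$r$ maximal intersection'' into the statement that the level $r$ is not simultaneously within the clique-level-interval of row $i$ and that of column $j$. This needs the precise relationship between maximal intersections and pairs of adjacent maximal cliques in the tree $P_S$, the fact (from Proposition~\ref{rmk:meetandjoin} and Lemma~\ref{preplemma} plus its column analogue) that the relevant level sets are intervals, and the claim that if row $i$ and column $j$ both belong to some maximal clique at level $r$ with $(i,j)\in S$, then $(i,j)$ in fact lies in a level-$r$ \emph{maximal intersection} — which I would prove by taking the clique $E_i$ (or an appropriate level-$r$ clique on the chain $E_i \le \cdots \le E^j$) and its neighbor in $P_S$ and showing $(i,j)$ lies in their intersection, invoking that cover relations in $P_S$ correspond to maximal intersections in $\int(S)$. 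Once these interval and coexistence facts are nailed down, verifying the column sum is $1$ in all cases is routine, and the conclusion that $\Abar_S$ is a multipartition matrix follows immediately since each $\Bcal_r$ is a partition matrix.
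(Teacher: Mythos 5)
Your proof is correct and uses essentially the same ideas as the paper's: the interval structure of the levels at which a given row (resp.\ column) appears in maximal cliques (via Lemma~\ref{preplemma} and Proposition~\ref{prop:ReverseInclusionOfColumns}), the relation $E_i \leq E^j$ whenever $(i,j) \in S$ from Proposition~\ref{rmk:meetandjoin}, and the identification of level-$r$ maximal intersections with cover relations $D_r < D_{r+1}$ on the chain from $E_i$ to $E^j$. The paper organizes the argument as ``rows of $\Bcal_r$ have pairwise disjoint support'' followed by ``every column is covered,'' whereas you go column-by-column proving ``exactly one $1$'' directly --- a cosmetic reorganization of the same proof, and your observation that the three ranges $r < \level(E_i)$, $\level(E_i) \leq r < \level(E^j)$, and $r \geq \level(E^j)$ partition $\{0,\dots,h\}$ and correspond to $i \in R_r$, a unique level-$r$ intersection containing $(i,j)$, and $j \in C_r$ respectively is a clean way to nail down the bookkeeping that you were (rightly) wary of.
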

\begin{proof}
    First we show that there is at most one $1$ in each column in each block of $\mathcal{\Abar}_S$ which implies that rows of $\mathcal{B}_r$ have disjoint support. Indeed, none of the rows associated to elements of $X_r$ overlap in their support since cliques in the same level of $P_S$ are disjoint. If $i \neq j \in [m]$, then $a_i$ and $a_j$ have disjoint support. Similarly if $i \neq j \in [n]$, then $b_i$ and $b_j$ have disjoint support. 
    
    If $i \in R_r$, then $i$ is not a row of a level $r+1$ maximal clique, and hence cannot be included in any level $r$ maximal intersection. Therefore the support of the $a_i$s is disjoint from that of $\Ibb_D$ for each $D \in X_r$. Similarly if $j \in C_r$, the $j$ is not a column of a level $r+1$ maximal clique. So $j$ is not a column of any level $r$ intersection. This implies that the support of the $b_j$s is disjoint from that of the maximal intersection vectors. 

    It is now left to show that the rows $a_i$s and $b_j$s have disjoint support when $i \in R_r$ and $j \in C_r$. Assume, for contradiction that there is an $i \in R_r$ and $j \in C_r$ such that the support of these vectors intersect in a non-empty set. Then $ij \in S$ and $ij$ is not in a level $r$ maximal intersection. Furthermore, since $i \in R_r$, the row $i$ is not in any level $r$ maximal clique. So $\level(E_i) > r$. Since $j \in C_r$, the column $j$ is not in any level $r+1$ maximal clique. So $\level(E^j) \leq r$. But by Proposition \ref{rmk:meetandjoin}, we must have $E_i < E^j$ in $P_S$, which is a contradiction.


    So the rows of $\mathcal{B}_r$ have disjoint support. 
    To complete the proof of this lemma, we must show that each column in $\mathcal{B}_r$ has a $1$ in some row. If the column indexed by $ij$ in $\Bcal_r$ has a $1$ in one of its rows, we say that $ij$ is \emph{covered} in $\Bcal_r$. 
    
    Let $xz \in S$. If $x \in R_r$ or $z \in C_r$, then $xz$ is covered in $\Bcal_r$. Suppose otherwise. Then since $x \not\in R_r$, we have that $x \in \rows(E)$ for some $E \in \max(S)$ with $\level(E) \leq r$. In particular, $\level(E_x) \leq r$. Since $z \not\in C_r$, there exists a $D \in \max(S)$ such that $z \in \cols(D)$ and $\level(D) = r+1$. Hence, $\level(E^z) > r.$ 

    Since $x \in \rows(E_x)$ and $x \in \rows(E^z)$, we have that $E_x < E^z$ in $P_S$. So there exist $D_r, D_{r+1} \in \max(S)$ of levels $r$ and $r+1$ respectively such that $E_x \leq D_r < D_{r+1} \leq E^z$. So $D_r \cap D_{r+1}$ is a level $r$ intersection. Moreover, since $z \in \cols(E_x), \cols(E^z)$, we have that $z$ belongs to $\cols(D_r)$ and $\cols(D_{r+1})$ as well. So $xz \in D_r, D_{r+1}$. Hence, $xz \in X_r$. So $xz$ is covered in $\Bcal_r$, as needed.
    \end{proof}

We use these lemmas to prove the theorem stated at the beginning of this section; Theorem \ref{2waythm}. 

\begin{proof} [Proof of \ref{2waythm}]
    Suppose that $P_S$ has $h$ levels so that $\mathcal{\Abar}_S$ is a multipartition matrix with blocks $\Bcal_0,\dots,\Bcal_h$. 
    Here, we will prove that the set of coordinates associated to $\mathcal{\Abar}_S$ is a cTFP along the $r$th coordinate for each $r = 1, \ldots, h-1$. We will do this by using the conditions given in Theorem \ref{mainthm}. Let $ij, k\ell \in S$ and let $(i, c_1,\ldots, c_{h-1}, j)$ and $(k, d_1, \ldots, d_{h-1}, \ell)$ be their corresponding elements of $\bar{S}$. We must show that if $c_r=d_r=x$, then 
    \begin{equation}\label{eq:neededtuples}
    (i, c_1, \ldots, c_{i-1}, x, d_{i+1}, \ldots, d_{h-1}, \ell) \quad \text{and} \quad (k, d_1, \ldots, d_{i-1}, x, c_{i+1}, \ldots c_{h-1}, j)\end{equation} are also both in $\bar{S}$. 
    There are three cases depending on whether $x \in R_r$, $x \in C_r$ or $x \in X_r$.

    \noindent \textbf{Case I:} Let $x \in R_r.$ Then we have $i = k = x$. Then by definition, $x \in R_s$ for all $s \leq r$. So for all $s \in \{0,\dots,r\}$ we have $c_s = d_s = x$.  So the tuples in Equation \ref{eq:neededtuples} lie in $\bar{S}$ trivially.

    \noindent \textbf{Case II:} Similarly, let $x \in C_r$. Then $j = \ell = x$ and by definition, $x \in C_s$ for all $s \geq r$. So for all $s \in \{r, \dots, h\}$ we have $c_s = d_s = x$. So the tuples in Equation \ref{eq:neededtuples} lie in $\bar{S}$ trivially.

   \noindent \textbf{Case III:} Let $x \in X_r$ so that $x = D_p \cap D_q := M_{pq}$ where $\level(D_p) = r$, $\level(D_q) = r+1$ and $D_p \leq D_q$ is a cover relation in $P_S$. There are three subcases.
   
   \noindent\textit{Subcase (a):} Suppose that $D_p$ consists of a single row. Then all elements of $M_{pq}$ have the same first coordinate and in particular, $i = k$. Moreover, $D_p$ is the minimal element of $P_S$ containing row $i$ and by definition of $R_s$, we must have $i \in R_s$ for each $s < r$. So $c_s = d_s = i$ for each $s < r$. 
   Hence, the parts of the tuples preceding $M_{pq}$ are equal, so the tuples in Equation \ref{eq:neededtuples} belong to $\bar S$.

   \smallskip
   
   \noindent\textit{Subcase (b):} Similarly, we consider the case where $D_q$ consist of a single column so that all elements of $M_{pq}$ have the same second coordinate and $j = \ell.$  Since $P_S$ is also ordered by reverse inclusion of columns, we must have that $D_q$ is the maximal element of $P_S$ containing column $j$. So we have $j \in C_s$ and $c_s = d_s = j$ for all $ s > r$.  Hence, the parts of the tuples following $M_{pq}$ are equal, so the tuples in Equation \ref{eq:neededtuples} belong to $\bar S$ as needed.

  \smallskip
   
    \noindent\textit{Subcase (c):} Finally we consider the case where $D_p \cap D_q$ are not contained in a single row or column. Since $ij, k\ell \in M_{pq}$ and $M_{pq}$ is a clique, we also have that $i\ell, kj \in M_{pq}$.    Let the tuple in $\bar S$ corresponding to  $i\ell$ be of the form: $(i, e_1, \ldots, e_{r-1}, M_{pq}, e_{r+1}, \ldots, e_n, \ell)$. Then, we need to show that:
   \begin{enumerate}
       \item $c_s = e_s, \text{ for } s = \{1, \ldots, r-1\} $
       \item $d_s = e_s, \text{ for } s = \{r+1, \ldots, h-1\}$
   \end{enumerate}



   To prove the first point, we will show that for any $E \in \max(S)$ such that $\level(E) \leq \level(D_p),$ if $(i,j)\in E$ then $(i,\ell) \in E$ as well. First note that since $ij \in E$ and $ij \in D_p$, we must have that $E \leq D_p$ in $P_S$. 
   Since $E \leq D_p, D_q$ in $P_S$, we have $\rows(E) \subseteq \rows(D_p\cap D_q)$. By Proposition \ref{prop:ReverseInclusionOfColumns}, $\cols(D_p\cap D_q) \subseteq \cols(E)$. Now, $j, \ell \in \cols(D_p\cap D_q)$, which implies that $j,\ell \in \cols(E)$. Also, $i \in \rows(E)$. Hence, $i\ell \in E$. 

   Similarly, to prove the second point, we show that for $E \in \max(S)$ with $\level(E) \geq \level(D_q)$ in $P_S$, if $k\ell \in E$, then $i\ell \in E$. Since $k\ell \in E$ and $k \ell \in D_q$, we must have that $D_q \leq E$ in $P_S$. Since $E \geq D_q$, we have $\rows(D_p\cap D_q) \subseteq \rows(C)$. Now, $i, k \in \text{rows}(D_p\cap D_q)$, and hence $i,k \in \text{rows}(E)$. Also, $\ell$ is a column of $E$. Hence, $i\ell \in E$. 

   Therefore, the tuple in $\bar S$ corresponding to $i\ell$ is $(i, c_1, \ldots, c_{r-1}, M_{pq}, d_{r+1}, \ldots, d_h, \ell)$. We can similarly prove that tuple corresponding to $kj$ is $(k, d_1, \ldots, d_{r-1}, M_{pq}, c_{r+1}, c_h, j)$. Therefore, the conditions stated in Theorem \ref{mainthm} are fulfilled in this case.



    Hence, in each case, the conditions of Theorem \ref{mainthm} are satisfied. So, the matrix $\mathcal{\Abar}_S$ is a cTFP along the $r$th coordinate for each $r = 1, \ldots, h-1$. 
\end{proof}

A consequence of this theorem is that all 2-way quasi-independence models with rational MLE are ``built" out of very simple building blocks via the toric fiber product. In fact, it follows that their vanishing ideals can be obtained by repeatedly taking toric fiber products of linear ideals.

\begin{definition}
For each $r \in \{0,\dots,h+1\}$, denote by $\Abar_S^{r}$ the multipartition matrix with blocks $\Bcal_0,\dots,\Bcal_{r}$. We say that $\Abar_S$ is an \textbf{iterated toric fiber product of linear ideals} if for each $r$, there exist matrices $X$ and $Y$ such that the columns of $X$ are a sub-multiset of those of $\Abar^{r}$, the columns of $Y$ are a sub-multiset of those of $\Bcal_{r+1}$, and $\Abar_S^{r+1}$ is a toric fiber product of $X$ and $Y$. 
\end{definition}

This a rephrasing of the definition introduced by Coons, Langer, and Ruddy \cite[Definition~3.19]{coons2022rational}.

\begin{theorem}\label{thm:LinearIdeals}
    Let $\Mcal_S$ be a $2$-way quasi-independence model with rational maximum likelihood estimator. Let $\Abar_S$ be as in Lemma \ref{lem2}. Then $\Abar_S$ is an iterated toric fiber product of linear ideals.
\end{theorem}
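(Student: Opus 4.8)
The plan is to argue by induction on $r$, building $\Abar_S$ one block at a time and showing that each block is appended via a toric fiber product whose new factor has a linear ideal. The base case is $\Abar_S^0 = \Bcal_0$: it is a single partition matrix, so each column is a standard basis vector, its integer kernel is spanned by differences of unit vectors, and its toric ideal is generated by the linear forms $x_s - x_{s'}$ and is linear. The same remark shows that, for every $r$, any matrix whose columns form a sub-multiset of those of the partition matrix $\Bcal_{r+1}$ (Lemma \ref{lem2}) automatically has a linear vanishing ideal; so the only real content of the theorem is to produce, for each $r$, a toric fiber product decomposition $\Abar_S^{r+1} = X \times_D Y$ with $X$ a submatrix of $\Abar_S^{r}$ and $Y$ a submatrix of $\Bcal_{r+1}$.

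To obtain that decomposition I would reuse the proof of Theorem \ref{2waythm}. There it is shown that the coordinate split of $\bar S$ separating the first $r+1$ coordinates (those through block $\Bcal_r$) from the rest satisfies the criterion of Theorem \ref{mainthm}. That criterion is inherited by the truncated index set $\bar S^{\le r+1}$ of $\Abar_S^{r+1}$: given $s_1 = A_1 +_r^{\mathrm{in}(A)} B_1$ and $s_2 = A_2 +_r^{\mathrm{in}(A)} B_2$ in $\bar S^{\le r+1}$ with a common $r$th coordinate, lift them to $\bar S$, apply the criterion there, and project the resulting tuples back onto coordinates $0,\dots,r+1$. Hence $\Abar_S^{r+1}$ is a coordinate toric fiber product along its $r$th coordinate; identifying the left factor as $\Abar_S^{r}$ (up to removing duplicated columns) and the right factor as the piece supplied by $\Bcal_{r+1}$, and checking that the Case I--III analysis of the proof of Theorem \ref{2waythm} goes through verbatim because the cliques $E_i$, $E^j$, the sets $X_r, R_r, C_r$, the poset $P_S$, and Lemma \ref{preplemma} all restrict compatibly to $\Abar_S^{r+1}$, completes the inductive step. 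Multiplicativity of the ML-degree under the toric fiber product is a useful consistency check, since $\Mcal_S$ and all linear ideals have ML-degree one.

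The step I expect to be the main obstacle is reconciling the cTFP-along-a-coordinate picture with the exact shape of \cite[Definition~3.19]{coons2022rational}: the naive split along coordinate $r$ has the \emph{full} model on coordinates $r$ and $r+1$ as its right-hand factor, which need not be linear, so one must instead carve out genuine submatrices $X \subseteq \Abar_S^{r}$ and $Y \subseteq \Bcal_{r+1}$ whose toric fiber product still reproduces $\Abar_S^{r+1}$ with its $\#S$ columns. Making this careful choice of columns --- and verifying that truncating $\Abar_S$ to its first $r+1$ blocks leaves the clique and poset data intact --- is where the work lies; once it is in place, linearity of the new factor is immediate from the partition-matrix property and the induction closes.
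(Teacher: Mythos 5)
Your framework is right, and you have correctly identified the crux: showing $\Abar_S^{r+1}$ is a cTFP along coordinate $r$ (which your truncation argument does give you) yields a right-hand factor living on the \emph{two} blocks $\Bcal_r,\Bcal_{r+1}$, whereas the definition demands a factor $Y$ supported on $\Bcal_{r+1}$ alone. But having named that obstacle you leave it unresolved, and it is not a routine fix: the decomposition required uses a \emph{different} multigrading than the coordinate-$r$ one from Theorem~\ref{mainthm}, so you cannot simply ``carve out'' submatrices from the cTFP you already have.

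The paper's actual argument builds the decomposition explicitly. The left factor is the index set $T$ underlying $\bar{S}^r$ with repetitions removed. The right factor $T'$ is a \emph{multiset of singletons} drawn from $L_{r+1}$, with prescribed multiplicities ($\#\cols(i)$ for $i\in R_{r+1}$, $\#\cols(E)$ for $E\in X_{r+1}$, and $1$ for $j\in C_{r+1}$). Both are then partitioned into parts $G_x$ and $H_x$ indexed by $x\in R_{r+1}\cup C_r\cup Y_{r+1}$, where $Y_{r+1}$ is the set of level-$(r+1)$ cliques; the assignment of a tuple to its part uses the minimal cliques $E_i$ and the maximal cliques $E^j$ from Proposition~\ref{rmk:meetandjoin}. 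One must then verify (i) that appending $H_x$ to $G_x$ reproduces $\bar{S}^{r+1}$, and (ii) that the indicator vector of each $G_x$ lies in the rowspan of the truncated $\Acal$-matrix (and similarly for $H_x$), which is needed for homogeneity of the toric ideals under this multigrading. None of this follows from the cTFP-along-coordinate-$r$ observation; it is the core construction of the proof and is absent from your proposal. Your claim that ``once it is in place, linearity of the new factor is immediate'' is true, but that is precisely because $T'$ is chosen to be a one-block multiset --- a fact you would need to establish, not assume.
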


\begin{proof} 
    Since $\Abar_S$ is a cTFP along its $r$th coordinate, so is $\Abar_S^{r+1}$. In fact, we will show that we can obtain $\Abar_S^{r+1}$ as a toric fiber product of two matrices whose columns are a sub-multiset of those of $\Abar_S^r$ and $\Bcal_{r+1}$ respectively. To do this we will construct a set of $(r+1)$-tuples,
    \[
    T \subset L_0 \times L_1 \times \dots \times L_r
    \]
    and a multiset of singletons $T' \subset L_{r+1}$. We then give a procedure for appending the elements of $T'$ to those of $T$ and show that this is a toric fiber product.

    Let $\bar{S}^r$ be the multiset obtained by truncating each element of $\bar S$ after block $\Bcal_r$. This is the multiset of tuples that yields the multipartition matrix $\Abar_S^{r+1}$. Let $T$ be the set underlying $\bar{S}^r$; that is, we remove all repetitions from $\bar{S}^r$ to create $T$. 
    To construct $T'$, we must specify the multiplicity of each element of $L_{r+1}$ in $T'$. If $i \in R_{r+1}$, its multiplicity in $T'$ is $\#\cols(i)$.  If $E \in X_{r+1}$ is a maximal intersection, its multiplicity in $T'$ is $\#\cols(E).$ If $j \in C_{r+1}$, its multiplicity in $T'$ is $1$.

    Let $Y_r$ denote the set of all level $r$ elements of $\max(S)$. We partition the elements of $T$ into parts $G_x$ indexed by $x \in  R_{r+1} \cup C_r \cup Y_{r+1}$ based on their last coordinate as follows. Let $A \in T$. If $A_r = i \in R_{r+1}$, then $A$ belongs to $G_i$. If $A_r = i \in R_r \setminus R_{r+1}$, then the minimal clique in $P_S$ with $i$ in its rows, $E_i$, is a level $r+1$ clique. So we place $A$ in $G_{E_i}$. If $A_r = j \in C_r$, then $A$ belongs to $G_j$. If $A_r = D \cap E \in X_r$ with $\level(D) = r$ and $\level(E) = r+1$, then $A$ belongs to $G_E$.

    We similarly partition the multiset $T'$ into parts $H_x$ indexed by $x \in R_{r+1} \cup C_r \cup Y_{r+1}.$ Let $A \in T'$. If $A = i \in R_{r+1}$, then $A$ belongs to $H_i$. 
    If $A = D \cap E$ with $\level(D) = Y_{r+1}$ and $\level(E) = Y_{r+2}$, then $A$ belongs to $G_D$. 
    
    Finally we must partition the elements $A \in T'$ such that $A \in C_{r+1}$. If $A = j \in C_r$, then $A$ belongs to $H_j$. Otherwise, by definition of $C_{r+1}$, there exists a clique $E \in \max(S)$ with $\level(E) = r+1$ such that $j \in \cols(E)$ but $j \not\in \cols(D)$ for any clique with $\level(D) = r+2$. In other words, $E = E^j$. If $E^j \in Y_{r+1}$, then $A$ belongs to $H_{E^j}$. Otherwise, $E^j$ is minimal in $P_S$, so $E^j = E_i$ for some row $i$ with $i \in R_r$. In this case, $A$ belongs to $H_i.$

    By construction, we have that $\bar{S}^{r+1}$ is obtained from $T$ and $T'$ by appending all elements of $H_x$ to all elements of $G_x$ for each $x \in R_r \cup C_r \cup Y_{r+1}.$ Let $\tilde{\Acal}^r$ and $\tilde{\Bcal_{r+1}}$ denote the $A$-matrices for the multipartition models specified by $T$ and $T'$ respectively. Let $\tilde{\Bcal}_r$ denote the last block of $\tilde{\Acal}^r$, which corresponds to the $r$th coordinates of elements of $T$. In order to show that $\bar{S}^{r+1}$ is the toric fiber product of $T$ and $T'$, we must show 
    that the toric ideals $I_T$ and $I_{T'}$ are homogeneous with respect to the multigradings induced by the partitions $G_x$ and $H_x$ respectively. 
    
    Let $I_T \subset \mathbb{C}[y_A \mid A \in T]$. Since $I_T$ is toric, it suffices to check that each of its binomials are homogeneous with respect to this multigrading. Let $\bfy^\alpha - \bfy^\beta \in I_T$. 
    By \cite[Corollary~4.3]{sturmfels1996grobner}, we have $\alpha - \beta \in \ker \tilde{\Acal}^r$. This binomial is homogeneous with respect to the multigrading if and only if
    \[
    \sum_{A \in G_x} \alpha_A = \sum_{A \in G_x} \beta_A
    \]
    for each $x \in R_{r+1} \cup C_r \cup Y_{r+1}$. In other words, we must check that $(\alpha - \beta) \cdot \Ibb_{G_x} = 0$ for all such $x$. 
    Since $\alpha - \beta \in \ker \tilde{\Acal}^r$, we may prove this by showing that the indicator vector for $G_x$ lies in the rowspan of  $\tilde{\Acal}^r$  for each $x \in R_{r+1} \cup C_r \cup Y_{r+1}$. Similarly, we must show that the indicator vector for $H_x$ lies in the rowspan of $\tilde{\Bcal}_{r+1}$ for each such $x$.

    This is true by our constructions of $G_x$ and $H_x$. First, consider the set $T$. Our partition of elements into the parts $G_x$ depends only on the last coordinate. If $A, B \in T$ have $A_r = B_r$ and $A \in G_x$, then $B \in G_x$ as well. Denote the rows of  $\tilde{\Bcal}_{r}$ by $\alpha_y$ such that $y \in L_r$. Then the indicator vector of $G_x$ is the sum of those $\alpha_y$ such that $y \in G_x$. Similarly, if $A,B \in T'$ are equal, then $A \in H_x$ if and only if $B \in H_x$. Let $\beta_y$ for $y \in L_{r+1}$ denote the rows of $\tilde{\Bcal_{r}}$. Then the indicator vector for $H_x$ is the sum of those $\beta_y$ such that $y \in H_x$. Hence the corresponding toric ideals are homogeneous with respect to the multigradings induced by these partitions.

    So we conclude that $\bar{S}^{r+1}$ is the toric fiber product of $T$ and $T'$. The toric ideal corresponding to $T'$ is a linear ideal. Since this holds for all $r \in \{0,\dots,h\}$, we see that $\bar{S}$ is an iterated toric fiber product of linear ideals. 
\end{proof}

\begin{example}
    Recall the quasi-independence model from Example \ref{poseteg2}. Then, $\bar{S}$ is a set of $4$-tuples. In this case, we will illustrate the proof of Theorem \ref{thm:LinearIdeals} by showing that $\Acal^2_S$ is a TFP of two matrices whose columns are sub-multisets of the columns of $\Acal^1_S$ and $\Bcal_2$, respectively. In order to differentiate between rows and columns of the star matrix, we denote elements of $R_r$ by $a_i $ such that $ i \in [m]$ and elements of $C_r$ by $b_j $ such that $j \in [n]$. 

    Here, $T \subset L_0 \times L_1$, is the set $\bar{S}^1$ without repetitions:
    $$T = \{(a_1,D_1\cap D_3), (a_1,b_3), (a_2,D_2\cap D_3), (a_2,D_2 \cap D_4), (a_3,a_3), (a_4,a_4), (a_5,a_5)\}.$$
    Furthermore, $L_2 = \{\{b_1, b_2, b_3, b_1, b_2, b_4, D_4\cap D_5, b_1, b_2, b_4, D_4\cap D_5, a_5\}\}$. We note that $\#\text{cols}(5)$ is $1$ and hence hence the multiplicity of $a_5$ in $T'$ is one. The maximal intersection has one column, and so $D_4\cap D_5$ has a multiplicity of one in $T'$. Finally, the elements of $C_2$ each appear once in $T'$. Hence $T' = \{D_4 \cap D_5, b_1, b_2, b_3, b_4, a_5\}$. 

    The index set of the partitions $G_x$ and $H_x$ is 
    \[
    R_2 \cup C_1 \cup Y_2 = \{a_5, b_3, D_3, D_4\}.
    \]
    
    For example, consider $(a_3,a_3)$. We have that $3 \in R_2\setminus R_1$. Hence, $(a_3,a_3)\in G_{E_3}$. Note that $E_3 = D_3$. Similarly, we find that $(a_4,a_4) \in G_{E_4} = G_{D_4}$. Also, $3 \in C_{2} \subseteq C_1$. So, $a_3 \in H_3$. Then, noting that $E^1=E^2=D_3$ and that $E^4 = D_4$, we see the following:

    \begin{align*}
         &G_3 = (a_1,b_3), && H_3  = b_3; \\
         &G_5 = (a_5,b_5), && H_5 = a_5; \\
         &G_{D_3} = \{(a_1,D_1\cap D_3), (a_2,D_2\cap D_3), (a_3,a_3)\}, && H_{D_3}  = \{b_1, b_2\};\\
          & G_{D_4} = \{(a_2,D_2\cap D_4), (a_4, a_4)\}, && H_{D_4} = \{D_4 \cap D_5, b_4\}.
    \end{align*}

    Appending each $k \in H_x$ to each $ij \in G_x$, we obtain the set of triples 
    \begin{align*}
        \{&(a_1,D_1\cap D_3,b_1), (a_1,D_1\cap D_3,b_2), (a_1,b_3,b_3), (a_2,D_2\cap D_3,b_1), (a_2,D_2\cap D_3,b_2), (a_2,D_2\cap D_4,b_4),\\
        &(a_2,D_2\cap D_4,D_4\cap D_5), (a_3,a_3,b_1), (a_3,a_3,b_2), (a_4,a_4,b_4), (a_4,a_4,D_4 \cap D_5), (a_5,a_5,a_5)\}.
    \end{align*} 

    This set of triples is indeed $\bar{S}^2$.
\end{example}

This result relates to the  the classical iterative proportional scaling (IPS) algorithm which can be applied to a multipartition model to approximate its maximum likelihood estimate. As discussed in \cite{coons2022rational}, the performance of IPS depends on the $A$-matrix of the model and can differ with different parametrizations. In special cases, IPS may produce the \emph{exact} MLE in only one cycle; such a parametrization is said to exhibit \emph{one-cycle convergence}. The following is a direct consequence of our Theorem \ref{thm:LinearIdeals} and \cite[Theorem~3.22]{coons2022rational}.

\begin{corollary}
    Every $2$-way quasi-independence model has a parametrization for which the iterative proportional scaling algorithm exhibits one-cycle convergence.
\end{corollary}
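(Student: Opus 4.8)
The plan is to obtain this as an immediate corollary of Theorem~\ref{thm:LinearIdeals} together with the one-cycle convergence criterion of Coons, Langer, and Ruddy \cite{coons2022rational}. First I would invoke Theorem~\ref{janethm} to reduce to the case that $\Mcal_S$ has rational MLE (equivalently, that $\Gcal_S$ is doubly chordal bipartite): one-cycle convergence of IPS returns the \emph{exact} MLE as a rational function of the data after finitely many updates, which forces the model to have ML-degree one. For such a model, Theorem~\ref{thm:LinearIdeals} furnishes the parametrization $\Abar_S$ of Lemma~\ref{lem2}, a multipartition matrix with blocks $\Bcal_0,\dots,\Bcal_h$ whose vanishing ideal is an iterated toric fiber product of linear ideals in the sense of the definition preceding that theorem.

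Next I would recall that, since our notion of an iterated toric fiber product of linear ideals is a rephrasing of \cite[Definition~3.19]{coons2022rational}, the hypotheses of \cite[Theorem~3.22]{coons2022rational} are satisfied by the pair $(\Mcal_S,\Abar_S)$. That theorem asserts precisely that a multipartition model whose $\Acal$-matrix is an iterated toric fiber product of linear ideals has the property that the iterative proportional scaling algorithm, run on that parametrization, returns the exact maximum likelihood estimate after cycling through the blocks once; that is, it exhibits one-cycle convergence. Applying this to $\Abar_S$ yields the claim.

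The only point requiring care is the bookkeeping that matches the data produced in the proof of Theorem~\ref{thm:LinearIdeals} to the data required by \cite[Theorem~3.22]{coons2022rational}: the ordering $\Bcal_0,\dots,\Bcal_h$ of the partition matrices must be the order in which IPS updates, and at each stage the sub-multiset matrices $X$ and $Y$ constructed there must be the toric fiber product factors demanded in \cite{coons2022rational}. Both are immediate from the constructions, which were designed with this compatibility in mind, so I do not anticipate any substantive obstacle — the real content of the statement is already contained in Theorem~\ref{thm:LinearIdeals}, and this corollary is just its translation into the language of \cite{coons2022rational}.
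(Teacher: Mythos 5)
Your proof is essentially the paper's own argument: the corollary is stated in the paper as a direct consequence of Theorem~\ref{thm:LinearIdeals} together with \cite[Theorem~3.22]{coons2022rational}, with no further proof, and you correctly identify that the cited theorem of Coons, Langer, and Ruddy does exactly the work needed once $\Abar_S$ is known to be an iterated toric fiber product of linear ideals. The bookkeeping you flag (matching the block order $\Bcal_0,\dots,\Bcal_h$ to the IPS update order and the factor matrices $X,Y$ to those in \cite{coons2022rational}) is indeed immediate, since the paper's definition of iterated TFP of linear ideals is explicitly a rephrasing of \cite[Definition~3.19]{coons2022rational}.

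One point of caution about your opening sentence. You say you "invoke Theorem~\ref{janethm} to reduce to the case that $\Mcal_S$ has rational MLE," and justify this by observing that one-cycle convergence of IPS forces ML-degree one. But that observation cuts the other way: it does not give you a reduction, it shows that the corollary \emph{cannot} hold for a 2-way quasi-independence model whose ML-degree exceeds one. What you are really doing is noticing that the hypothesis "with rational MLE" is missing from the corollary's statement (it is present in Theorem~\ref{thm:LinearIdeals}, which does the work, but was dropped here and in the body statement of Theorem~\ref{2waythm}, though not in the introduction's version). Your identification of this mismatch is correct and worth surfacing; just phrase it as a necessary hypothesis that must be restored, rather than as a reduction step.
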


\section{Lawrence Lifts of $2$-Way Quasi-Independence Models}

In this section, we review Lawrence lifts and consider their application to quasi-independence models. Lawrence lifts were first introduced in \cite{orientedmatroids} in the context of oriented matroids. In our context, they are an operation one can perform on a multipartition matrix to obtain another multipartition matrix with one more block. Several papers in the algebraic statistics literature make use of the Lawrence lift operation. For instance, they are used in Bernstein and Sullivant's classification of unimodular binary hierarchical models in \cite{bernstein2017unimodular}. In \cite{brysiewicz2023lawrence}, the authors consider the quasi-independence models specified by Lawrence lifts of certain unimodular matrices and compute their maximum likelihood degrees. 
In this section, we provide a necessary and sufficient condition for the model generated from the Lawrence lift of a $2$-way quasi-independence model to be a cTFP. 


Below, we introduce the concept of a Lawrence lift and that of a modified Lawrence lift. The latter will allow us to translate the lift into the context of quasi-independence models. 
\begin{definition}
    The \textbf{Lawrence lift} of a matrix $\mathcal{T}$ with $m$ rows and $n$ columns is the matrix $\Lambda(\mathcal{T})$, written as :
    $$\begin{pNiceArray}{c:c} 
\mathcal{T} & \bf{0}_{m \times n}\\
\hline
\bf{0}_{m \times n} & \mathcal{T}\\
\hline
\bf{I}_n & \bf{I}_n\\
\end{pNiceArray}$$
\end{definition}

We specifically consider the case where $\mathcal{T}$ is a  multipartition matrix representing a $2$-way quasi-independence model. 
Consider the following multipartition matrix $\mathcal{T} = (\mathcal{A}|\mathcal{B})^T.$
    Let $\mathcal{A}$ be of dimension $a \times n$ and $\mathcal{B}$ be of $b \times n$. Then, we can write $\Lambda(\mathcal{T})$ as:
    $$\begin{pNiceArray}{c:c} 
\mathcal{A} & \bf{0}_{a \times n}\\
\mathcal{B} & \bf{0}_{b \times n}\\
\hline
\bf{0}_{a \times n} & \mathcal{A}\\
\bf{0}_{b \times n} & \mathcal{B}\\
\hline
\bf{I}_n & \bf{I}_n\\
\end{pNiceArray}$$

The above is not a multipartition matrix, as the sums of the columns in each of the partitions does not sum to $1$. However, we are able to rearrange rows without affecting the resultant quasi-independence model. The following rearrangement of the above results in a multipartition matrix that represents a $3$-way quasi-independence model:
\begin{equation}\label{law_lift}
   \Lambda'(\mathcal{T})= \begin{pNiceArray}{c:c}
\mathcal{A} & \bf{0}_{a \times n}\\
\bf{0}_{a \times n} & \mathcal{A}\\
\hline
\mathcal{B} & \bf{0}_{b \times n}\\
\bf{0}_{b \times n} & \mathcal{B}\\
\hline
\bf{I}_n & \bf{I}_n\\
\end{pNiceArray}
\end{equation}

In the case of a multipartition matrix, we denote this as a \textbf{modified Lawrence lift}. Note that the rowspans of $(\Lambda(\mathcal{T}))$ and $(\Lambda'(\mathcal{T}))$ are the same, which implies that the models associated to them are the same. To illustrate this construction, consider the following example. 

\begin{example} \label{lleg}
    Consider the following bipartite graph $\mathcal{G}_S$, which we note is a tree, and its associated $\mathcal{A}$-matrix $\mathcal{A}_S$:

\begin{figure}[!htb]
    \centering
    \begin{minipage}{.5\textwidth}
        \centering
      \begin{tikzpicture}[every node/.style={minimum size=0.5cm}]
\node [shape=rectangle, draw=black] (A1) at (0,1) {1};
\node [shape=circle, draw=black] (B1) at (1,1) {1};
\node [shape=rectangle, draw=black] (A2) at (2,0) {2};
\node [shape=rectangle, draw=black] (A3) at (2,2) {3};
\node [shape=circle, draw=black] (B3) at (3,0) {2};
\path [-] (A1) edge (B1);
\path [-] (B1) edge (A3);
\path [-] (B1) edge (A2);
\path [-] (A2) edge (B3);
\end{tikzpicture}
    \end{minipage}%
    \begin{minipage}{0.5\textwidth}
    \centering
    $$\begin{pmatrix}
1&0&0&0\\
0&1&1&0\\
0&0&0&1\\
\hline
1&1&0&1\\
0&0&1&0\\
\end{pmatrix}$$
        \end{minipage}
\end{figure}

We can then take the modified Lawrence lift of this matrix:
{\footnotesize
$$\begin{pNiceArray}{cccc:cccc}
1&0&0&0&0&0&0&0\\
0&1&1&0&0&0&0&0\\
0&0&0&1&0&0&0&0\\  
\hdottedline
0&0&0&0&1&0&0&0\\
0&0&0&0&0&1&1&0\\
0&0&0&0&0&0&0&1\\
\hline
1&1&0&1&0&0&0&0\\
0&0&1&0&0&0&0&0\\
\hdottedline
0&0&0&0&1&1&0&1\\
0&0&0&0&0&0&1&0\\
\hline
1&0&0&0&1&0&0&0\\
0&1&0&0&0&1&0&0\\
0&0&1&0&0&0&1&0\\
0&0&0&1&0&0&0&1\\
\end{pNiceArray}$$}
The set of triples that represents the $3$-way quasi-independence model is:
$$S = \{(1,1,1), (2,1,2), (2,2,3), (3,1,4), (4,3,1), (5,3,2), (5,4,3), (6,3,4)\}.$$
This set is not a cTFP, as it does not satisfy the conditions given in Theorem~\ref{mainthm}. Specifically, we cannot split along the first coordinate as $S$ contains $(2,1,2)$ and $(2,2,3)$ but neither $(2,2,2)$ nor $(2,1,3)$. Similarly, we cannot split along the second as $S$ contains $(1,1,1)$ and $(2,1,2)$ but does not contain $(1,1,2)$ or $(2,1,1)$. Finally, it is impossible to split along the third coordinate as there is $(2,1,2)$ and $(5,3,2)$ but neither $(5,1,2)$ nor $(2,3,2)$. 
\end{example}

Let us invoke the definition of the following type of graph. 

\begin{definition}
    A \textbf{star tree} with $k+1$ vertices is a tree with one central node and $k$ leaves. 
\end{definition}

We consider examples such as the one above to conclude the following. 

\begin{theorem} \label{llthm}
    Let $\mathcal{M}_S$ be a $2$-way quasi-independence model with associated $\mathcal{A}$-matrix $\mathcal{A}_S$ and bipartite graph $\mathcal{G}_S$. Let $\mathcal{M}_{S'}$ be the three-way quasi-independence model with $\Acal$-matrix $(\Lambda'(\mathcal{A}_S))$. The modified Lawrence lift $(\Lambda'(\mathcal{A}_S))$ is a cTFP if and only if $\mathcal{G}_S$ is a collection of stars, such that the central vertex of each star corresponds to the same variable. 
\end{theorem}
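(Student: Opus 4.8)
The plan is to analyze the combinatorial structure of the set of triples $S'$ that arises from the modified Lawrence lift $\Lambda'(\mathcal A_S)$ and apply the criterion of Theorem \ref{mainthm} in each of the three possible coordinate directions. First I would make the tuple structure of $S'$ explicit: writing the edge set of $\mathcal G_S$ as $E = \{e_1,\dots,e_n\}$ with $e_\ell = (u_\ell, v_\ell)$, the two copies of the first block (the $\mathcal A$-rows) contribute row indices $1,\dots,m$ and $m+1,\dots,2m$, the two copies of the second block (the $\mathcal B$-rows) contribute $1,\dots,n$ and $n+1,\dots,2n$, and the bottom $I_n \mid I_n$ block indexes the edges themselves. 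So $S'$ is, up to relabelling, the set $\{(u_\ell,\, v_\ell,\, \ell) : \ell \in [n]\} \cup \{(m + u_\ell,\, n + v_\ell,\, \ell) : \ell \in [n]\}$ — each of the $n$ edges appears exactly twice, once ``in the first copy'' and once ``in the second copy,'' and the third coordinate $\ell$ is a unique label identifying the edge. The key structural fact is that the third coordinate is injective on each of the two copies and takes each value exactly twice.

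Next I would rule out splitting along the first or second coordinate using Theorem \ref{mainthm}, \emph{except} in the star case. Splitting along coordinate $3$ is the natural candidate: a $3$-coordinate split that puts the first coordinate with the third and the second coordinate with the third would ask that whenever $(a, b, \ell)$ and $(a', b', \ell)$ lie in $S'$ we also have $(a, b', \ell)$ and $(a', b, \ell)$ in $S'$; since for fixed $\ell$ there are exactly two tuples, namely $(u_\ell, v_\ell,\ell)$ and $(m+u_\ell, n+v_\ell,\ell)$, this forces $(u_\ell, n+v_\ell, \ell) \in S'$, which is impossible (the third-coordinate fibre has size two, not four) unless these two tuples coincide, i.e.\ unless $n = 0$ trivially. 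So the third coordinate never yields a cTFP. Splitting along the first coordinate requires: whenever $(a, b, c)$ and $(a, b', c')$ share first coordinate $a$, the tuples $(a, b, c')$ and $(a, b', c)$ are in $S'$. Because the third coordinate is an edge label, $(a, b, c')$ has edge label $c'$ but edge data determined by $(a,b)$; this tuple lies in $S'$ only if $(a,b)$ \emph{is} the endpoint pair of edge $c'$. Tracking this through, one finds the first-coordinate split works precisely when: for every vertex $a$ of the first variable and every pair of edges incident to $a$, swapping them is consistent — which is automatic, while the obstruction comes from edges incident to the \emph{second}-variable vertices, and symmetrically for the second-coordinate split. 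Carefully unwinding these conditions, a split along the first coordinate succeeds iff every vertex of the second variable has degree $1$ (so $\mathcal G_S$ is a disjoint union of stars centred at first-variable vertices), and a split along the second coordinate succeeds iff every vertex of the first variable has degree $1$.

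Finally I would assemble these: $\Lambda'(\mathcal A_S)$ is a cTFP iff one of the three coordinate directions works, and by the above this happens iff $\mathcal G_S$ is a disjoint union of stars with all centres among the first-variable vertices, or a disjoint union of stars with all centres among the second-variable vertices — equivalently, $\mathcal G_S$ is a collection of stars whose central vertices all correspond to the same one of the two variables. The main obstacle I anticipate is the bookkeeping in the ``only if'' direction: carefully showing that if $\mathcal G_S$ has any vertex of degree $\ge 2$ on \emph{each} side (or any non-star component), then \emph{every} choice of coordinate and $j$-coordinate split fails the Theorem \ref{mainthm} criterion. The cleanest route is probably to exhibit, for such a $\mathcal G_S$, an explicit pair of tuples in $S'$ sharing the relevant coordinate whose ``swap'' is forced to lie outside $S'$ — exactly as in Example \ref{lleg} — and to check this obstruction survives any relabelling of which non-pivot coordinates go into $A$ versus $B$ (here the fact that $k=3$ means the only choices are the three pivots, which keeps the casework finite and manageable).
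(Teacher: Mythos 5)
Your overall strategy matches the paper's: make the tuple structure of $S'$ explicit, rule out a split along the third (edge-label) coordinate, and then characterize when a split along the first or second coordinate satisfies the Theorem~\ref{mainthm} criterion in terms of vertex degrees in $\mathcal{G}_S$. Your analysis of the third coordinate is correct for the same reason as in the paper: each third-coordinate fibre has exactly two elements (the two ``copies'' of the edge), and the required swap would land outside $S'$.

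However, there is a genuine error in your treatment of the first and second coordinates, in the sentence ``for every vertex $a$ of the first variable and every pair of edges incident to $a$, swapping them is consistent --- which is automatic, while the obstruction comes from edges incident to the second-variable vertices.'' This is exactly backwards. If two edges $e_i = (\alpha, b_i)$ and $e_j = (\alpha, b_j)$ share the first-variable vertex $\alpha$ (with $b_i \neq b_j$), then $(\alpha, b_i, i)$ and $(\alpha, b_j, j)$ share first coordinate, and the swap $(\alpha, b_i, j)$ would have to equal one of the only two triples with third coordinate $j$, namely $(\alpha, b_j, j)$ or its shifted copy; neither is $(\alpha, b_i, j)$, so the swap fails. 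So it is precisely edges sharing a first-variable endpoint that obstruct the first-coordinate split, not edges sharing a second-variable endpoint. Consequently the first-coordinate split works iff every first-variable vertex has degree at most $1$ (stars centred on the second-variable side), and the second-coordinate split works iff every second-variable vertex has degree at most $1$ (stars centred on the first-variable side) --- the opposite of what you wrote. Your final conclusion happens to be correct only because the target statement is symmetric in the two sides; the intermediate correspondence between coordinate directions and star centres needs to be transposed, and the ``automatic'' claim needs to be replaced by the explicit verification that the swap fails whenever a vertex has degree $\geq 2$ on the corresponding side.
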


\begin{proof}
Consider a multipartition matrix $(\mathcal{A|B})^T$ that is associated to a set of pairs $S$ and a quasi-independence model $\mathcal{M}_S$, with associated graph $\mathcal{G}_S$. Let $S$ be formed of the pairs $(a_1,b_1), (a_2,b_2), \ldots, (a_n,b_n)$.

Recall the definition of $\Lambda'((\mathcal{A|B})^T)$ from Equation \ref{law_lift}. Let this matrix be associated to the quasi-independence model $\mathcal{M}_{S'}$. In other words, $\Lambda'((\mathcal{A|B})^T)$ is $\mathcal{A}_{S'}$ Then, $S'$ is:
$$\{(a_1, b_1, 1), (a_2, b_2, 2), \ldots, (a_n, b_n, n), (a_1 + a, b_1+b, 1), (a_2+a, b_2+b, 2), \ldots, (a_n+a, b_n+b, n)\}. $$

This set can never be a TFP along the third coordinate, as we have $(a_i, b_i, i)$ and $(a_i+i, b_i+i, i)$ in $S'$ but neither $(a_i+i, b_i, i)$ nor $(a_i, b_i+i, i)$ in $S'$, for each $i \in \{1, \ldots, n\}$, by construction. Then, by the conditions provided in Theorem \ref{mainthm}, $S'$  is not a cTFP along the third coordinate. 

Now, assume that $\mathcal{A}_{S'}$ is a cTFP. Then, it can only be a cTFP along its first, or its second coordinate.

First assume that $\mathcal{A}_{S'}$ is a cTFP along the first coordinate. Assume for contradiction that $a_i = a_j$, for some $i \neq j$. Let $a_i = a_j = \alpha$. Then, the triples $(\alpha, b_i, i), (\alpha, b_j, j)$ are in $S'$.
Now, in this Lawrence lift, the only triples that have $i$ as the third coordinate are $(\alpha, b_i,i)$ and $(\alpha+i, b_i+i, i)$. So, $(\alpha, b_j, i)$ cannot exist in $S'$. Similarly, $(\alpha, b_i, j)$ cannot exist within $S'$ either. This contradicts the fact that $\mathcal{A}_{S'}$ is a cTFP along the first coordinate. 

If $\mathcal{A}_{S'}$ is a cTFP along the first coordinate, then $a_i \neq a_j$ for all $i \neq j$. Similarly, $\mathcal{A}_{S'}$ being a cTFP along the second coordinate implies that $b_i \neq b_j$ for all $i \neq j$.

We can interpret these conditions in terms of the graph $\Gcal_S$. If none of the $a_i$ are equal to each other, then in each vertex in the first partite set is only connected to one other vertex in $\Gcal_S$. The analogous condition holds if none of the $b_i$ are equal to each other. 
Therefore the graph is a disjoint union of star graphs such that the internal vertices belong to the same partite set.

For the converse, let us assume that the $\mathcal{G}_S$ is a union of star graphs, such that each internal vertex corresponds to the first variable. Then, we will show that $\mathcal{A}_{S'}$ is a cTFP along the second coordinate. By the structure of $\mathcal{G}_S$, there is only one pair in $S$ that has $b$ as the second coordinate. Hence, by the conditions given in Theorem \ref{mainthm}, $\mathcal{A}_{S'}$ is a cTFP along the second coordinate. Similarly, if each internal vertex in the stars correspond to the second variable, then $\mathcal{A}_{S'}$ is a cTFP along the first coordinate. Therefore, if $\mathcal{G}_S$ is a union of star graphs, with each internal vertex corresponding to the same variable, then $\mathcal{A}_{S'}$ is a cTFP.

Hence, $\mathcal{A}_{S'}$ is a cTFP if and only if $\mathcal{G}_S$ is a disjoint union of stars, with internal vertices corresponding to the same variable. 
\end{proof}

Now, we recall a result of Brysiewicz and Maraj \cite{brysiewicz2023lawrence} that states that the ML-degree of the Lawrence lift of the model associated to $\mathcal{G}_S$ is equal to the number of spanning trees over the graph. In particular, the following holds. 

\begin{theorem}\cite{brysiewicz2023lawrence}
    When $\mathcal{G}_S$ is a forest, associated to a quasi-independence model, the ML-degree of the model associated to the Lawrence lift of $S$ is 1.
\end{theorem}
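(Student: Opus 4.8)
The plan is to obtain this as a corollary of the theorem of Brysiewicz and Maraj recalled above, which identifies the ML-degree of the model with $\mathcal{A}$-matrix $\Lambda'(\mathcal{A}_S)$ with the number of spanning trees of $\mathcal{G}_S$. Note first that a forest is trivially doubly chordal bipartite (it has no cycles at all), so by Theorem \ref{janethm} the underlying $2$-way model $\mathcal{M}_S$ already has ML-degree $1$; the substance of the statement is that passing to the Lawrence lift does not raise the ML-degree, even though Lawrence lifts do so in general, and even though by Theorem \ref{llthm} the matrix $\Lambda'(\mathcal{A}_S)$ is typically not a cTFP, so the toric fiber product machinery of this paper does not apply to it directly. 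Thus the entire content reduces to the combinatorial assertion that $\mathcal{G}_S$ has exactly one spanning tree when it is a forest.

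To make this precise I would observe that, by construction, $\mathcal{A}_S$ is the unsigned vertex--edge incidence matrix of the bipartite graph $\mathcal{G}_S$: its columns are the edges $(i,j)\in S$, each carrying a $1$ in the two rows $a_i$ and $b_j$. Consequently the count appearing in the Brysiewicz--Maraj theorem is the number of bases of the graphic matroid $M(\mathcal{G}_S)$, equivalently the number of maximal spanning forests of $\mathcal{G}_S$. When $\mathcal{G}_S$ is a forest, its own edge set is independent in $M(\mathcal{G}_S)$ and, there being no further edges to add, is inclusion-maximal; hence it is the unique basis of $M(\mathcal{G}_S)$. If $\mathcal{G}_S$ is connected this says it has a single spanning tree; if $\mathcal{G}_S = G_1 \sqcup \dots \sqcup G_c$ is a disconnected forest, a basis of $M(\mathcal{G}_S)$ is a disjoint union of bases of $M(G_1),\dots,M(G_c)$, each of which is unique, so the count is again $1$. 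Applying the theorem of \cite{brysiewicz2023lawrence} — whose unimodularity hypothesis is met, since the incidence matrix of a bipartite graph is totally unimodular — yields ML-degree $1$ for the model $\mathcal{M}_{S'}$.

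Given that the Brysiewicz--Maraj theorem may be invoked as a black box, there is essentially no obstacle here; the only points requiring care are bookkeeping ones, namely confirming that ``number of spanning trees'' in the cited statement is to be read as the number of bases of the graphic matroid (so that the disconnected case is covered), and verifying the unimodularity hypothesis of that theorem. If one instead wanted a proof independent of \cite{brysiewicz2023lawrence}, the hard part would be computing the ML-degree of $\Lambda'(\mathcal{A}_S)$ from scratch — for instance via a matrix--tree-type evaluation of the Jacobian of the score equations, or by writing down the rational MLE explicitly in the forest case — which is precisely the technical heart of that paper and which I would not reproduce here.
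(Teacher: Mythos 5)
Your proposal matches the paper's (implicit) argument exactly: the paper simply cites the Brysiewicz--Maraj theorem identifying the ML-degree of the Lawrence lift with the number of spanning trees of $\mathcal{G}_S$ and lets the reader observe that a forest has exactly one such tree (or, in the disconnected case, exactly one basis of the graphic matroid). Your extra care in handling the disconnected case via the graphic matroid and in noting that the incidence matrix of a bipartite graph is totally unimodular (so the hypothesis of the cited theorem is met) is appropriate bookkeeping but does not change the approach.
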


Example \ref{lleg} provides a model that is ML-degree $1$ but the associated matrix is not a cTFP. We can construct infinitely many examples of such models in this manner. Indeed, consider a model that is generated from the Lawrence lift of $\mathcal{A}_S$ where $\mathcal{G}_S$ is a forest. If $\mathcal{G}_S$ is not a forest of stars, then the model corresponding to the Lawrence lift has ML-degree $1$ but $\Lambda'(\mathcal{A}_S)$ is not a cTFP. However, it is possible that, as in the case of $2$-way quasi-independence models, there is another matrix $L$ with $\mathrm{rowspan}(L) = \mathrm{rowspan}(\Lambda'(\Gcal_S))$ such that $L$ is a cTFP. We leave this as an open question.

\section{ML-Degrees and Facial Submodels}
In this section, we present a sufficient condition for a quasi-independence model to have ML-degree greater than one and a counterexample to the converse condition. We first review background on polytopes associated to log-linear models. 

Let $\mathcal{A} \in \mathbb{Z}^{m\times n}$ be the defining matrix of a log-linear model $\mathcal{M}_\Acal$. We can construct a polytope $\mathcal{P}_\Acal$ that is associated to the model by defining it as the convex hull of the columns of $\mathcal{A}$. We denote this convex hull as conv$(\mathcal{A})$.

\begin{definition}
    Let $\mathcal{A}'$ be a matrix whose columns are a subset of the columns of $\mathcal{A}$. Then, if $\mathcal{P}_{\mathcal{A}'}$ is a face of the polytope $\mathcal{P}_\mathcal{A}$, we say that $\mathcal{A}'$ is a \textbf{facial submatrix} of $\mathcal{A}$. In terms of the corresponding statistical models, we say that $\mathcal{M}_{\mathcal{A}'}$ is a \textbf{facial submodel} of $\mathcal{M}_{\mathcal{A}}$.
\end{definition}

We use the following result of Coons and Sullivant \cite{coons2021quasi}.

\begin{theorem}\label{submodels}
    \cite{coons2021quasi} Let $\mathcal{A} \in \mathbb{Z}^{m \times n}$ and $\mathcal{A}' \in \mathbb{Z}^{d \times n}$ be the matrix that is formed of $d$ of the columns of $\mathcal{A}$. Suppose that $\mathcal{A}'$ is a facial submatrix of $\mathcal{A}$. Then if the log-linear model $\mathcal{M}_{\mathcal{A}}$ has ML-degree $1$, then $\Mcal_{\Acal'}$ also has ML-degree $1$.
\end{theorem}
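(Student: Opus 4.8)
The plan is to reduce Theorem~\ref{submodels} to the equivalence between having ML-degree $1$ and having a rational MLE \cite{duarte2021discrete, huh2014likelihood}: assuming $\Mcal_\Acal$ has ML-degree $1$, I will show that the MLE of $\Mcal_{\Acal'}$ is a rational function of the data by realizing it as a limit of the (rational) MLE of $\Mcal_\Acal$ along a degeneration of the data toward the face.

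Let $F \subseteq [n]$ index the columns of $\Acal$ lying in the face $\mathcal{P}_{\Acal'}$, so the columns of $\Acal'$ are $\{a_i : i \in F\}$ and $\Acal'$ has the same rows as $\Acal$. Fix $\nu$ with $\nu^\top\Acal = \mathbf{1}$ (this also witnesses $\mathbf{1}\in\operatorname{rowspan}(\Acal')$), take a supporting functional for the face, and shift it by a multiple of $\nu$ to obtain $v$ with $v^\top a_i = 0$ for $i \in F$ and $v^\top a_i > 0$ for $i \notin F$. Now fix generic strictly positive data $u'$ on $F$, a generic positive vector $c$ on $[n]\setminus F$, and for $\epsilon>0$ let $u^{(\epsilon)}$ be the positive data vector equal to $u'$ on $F$ and to $\epsilon c$ off $F$. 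Since $\Mcal_\Acal$ has rational MLE, its MLE agrees on positive data with a rational map $\Phi$ off a proper subvariety; choosing $c$ generically, the curve $\epsilon\mapsto u^{(\epsilon)}$ avoids that subvariety for generic $\epsilon$, so $\hat p(\epsilon):=\Phi(u^{(\epsilon)})$ is a rational function of $\epsilon$ with coefficients rational in $u'$.

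Because $\hat p(\epsilon)\in\Delta_{n-1}$, any sequence $\epsilon_k\to 0^{+}$ has a subsequential limit $\hat p^{\ast}\in\overline{\Mcal_\Acal}\cap\Delta_{n-1}$. Birch's Theorem (Theorem~\ref{birchthm}) gives $\Acal\,\hat p(\epsilon)=(u^{(\epsilon)}_{+})^{-1}\Acal u^{(\epsilon)}$; passing to the limit yields $\Acal\,\hat p^{\ast}=(u'_{+})^{-1}\Acal u'$ with $u'$ extended by zeros. Pairing with $v$ and using $u'_i=0$ for $i\notin F$ forces $\sum_{i\notin F}(v^\top a_i)\hat p^{\ast}_i=0$, hence $\hat p^{\ast}_i=0$ for all $i\notin F$ (each $v^\top a_i>0$, each $\hat p^{\ast}_i\ge 0$). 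Thus $\hat p^{\ast}\in\overline{\Mcal_\Acal}\cap\{p_i=0:i\notin F\}$, which in the $F$-coordinates is precisely $\overline{\Mcal_{\Acal'}}$, the toric subvariety attached to this face \cite{sturmfels1996grobner}. Applying the same pairing to a supporting functional of an arbitrary proper subface $F''\subsetneq F$, and using $u'_i>0$ for all $i\in F$, rules out $\hat p^{\ast}_i=0$ for $i\in F$; so $\hat p^{\ast}|_F$ lies in the positive part $\Mcal_{\Acal'}$ and satisfies $\Acal'(\hat p^{\ast}|_F)=(u'_{+})^{-1}\Acal'u'$. By Birch's Theorem applied to $\Acal'$ this point is unique, so it is the MLE of $u'$ in $\Mcal_{\Acal'}$, it is independent of the subsequence, and $\lim_{\epsilon\to0^{+}}\hat p(\epsilon)$ exists and equals it. A rational function of $\epsilon$ with a finite limit at $\epsilon=0$ takes there a value that is again rational in the remaining variables, so the MLE of $\Mcal_{\Acal'}$ is a rational function of $u'$, and therefore $\Mcal_{\Acal'}$ has ML-degree $1$.

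The substantive points are the two boundary-escape claims---that the limit point is supported exactly on $F$ and is strictly positive there---and the bookkeeping guaranteeing the degenerating data avoids the indeterminacy locus of the rational MLE map; all are handled by the supporting functional $v$ (and its subface analogues) via Birch's Theorem. For quasi-independence models the genericity hedge can be dropped outright, since the explicit rational MLE of \cite[Theorem~5.4]{coons2021quasi} has numerator and denominator that are products of linear forms with nonnegative coefficients and hence has no poles on the positive orthant. A more algebraic route would instead build a Horn matrix for $\Acal'$ by deleting from a Horn matrix of $\Acal$ the rows indexed by $[n]\setminus F$ together with the corresponding boundary strata; this avoids limits but requires unpacking the Horn-uniformization description of ML-degree-$1$ models, which the argument above circumvents.
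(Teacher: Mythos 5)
Your proof is correct, and it takes a genuinely different route from the one in the literature. The paper itself cites Theorem~\ref{submodels} from \cite{coons2021quasi} without reproducing the argument, and the argument there (and in its sources, \cite{huh2014likelihood, duarte2021discrete}) is essentially algebraic: ML-degree~$1$ is characterized by a Horn uniformization, and passing to a facial submatrix corresponds to deleting rows of the Horn pair together with the associated boundary strata, which preserves the Horn structure. Your proof is instead analytic: you realize the MLE of the facial submodel as a limit of the rational MLE of the ambient model along a one-parameter degeneration of the data into the face, and then use Birch's theorem together with supporting functionals to locate the limit. This avoids unpacking the Horn machinery entirely, at the price of the bookkeeping you perform --- genericity of the degeneration curve against the indeterminacy locus, compactness of $\Delta_{n-1}$ to extract a subsequential limit, and the supporting-functional arguments establishing that the limit has support exactly $F$. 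The two deductions from Birch's equation (pairing with $v$ to kill coordinates off $F$, and pairing with supporting functionals of proper subfaces $F''\subsetneq F$ to keep coordinates on $F$ strictly positive) are exactly the right tools. One point worth flagging for completeness: the identification ``$\overline{\Mcal_\Acal}\cap\{p_i=0:i\notin F\}$ is $\overline{\Mcal_{\Acal'}}$'' should be read inside the simplex (i.e.\ for nonnegative real points), where it is the moment-map face correspondence; the scheme-theoretic complex intersection can acquire extra components, but you only ever use the nonnegative part, so the argument stands. Likewise, the step arguing that the subface functional rules out $\hat p^\ast_i=0$ for $i\in F$ implicitly uses that the support of a point in the nonnegative part of a projective toric variety is itself the column set of a face of the moment polytope --- this is true (and $\mathbf 1\in\operatorname{rowspan}(\Acal')$ guarantees projectivity), but it deserves an explicit sentence. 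With those clarifications the proposal is a valid, self-contained alternative proof.
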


Note that the contrapositive of the above theorem tells us that if a facial submodel of a model has ML-degree greater than 1, then the original model must also have ML-degree greater than 1. We use this result to prove Theorem \ref{slices}.

      Let $S \subset [n_1] \times \dots \times [n_k]$ specify the $k$-way quasi-independence model $\Mcal_S$ with $\Acal$-matrix $\Acal_S$. Let $a, b \in [k]$ be distinct indices and denote by $N_{ab}$ the product of each $[n_j]$ for $j \neq a,b$. Let $\mathbf{i} \in N_{ab}$.
    For each $(s, t) \in [n_a] \times [n_b]$, we define $\mathbf{i} + (s,t)$ to be the vector in $[n_1] \times \dots \times [n_k]$ whose $j$th coordinate is $i_j$ if $j \neq a,b$; $s$ if $j = a$; and $t$ if $j = b$. For fixed $a,b$ and $\mathbf{i}$, we define the set of ordered pairs
    \[
    S_{a,b}^{\mathbf{i}} = \{(s,t) \in [n_a] \times [n_b] \mid \mathbf{i} + (s,t) \in S \}.
    \]

\begin{theorem}\label{slices}
      Let $S \subset [n_1] \times \dots \times [n_k]$ with corresponding $\Acal$-matrix $\Acal_S$. If there exist distinct $a,b \in [k]$ and $\mathbf{i} \in N_{ab}$ such that the 2-way quasi-independence model specified by $S_{ab}^\mathbf{i}$ has ML-degree greater than 1, then $\Mcal_S$ also has ML-degree greater than 1.
\end{theorem}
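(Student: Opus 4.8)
The plan is to exhibit the $2$-way quasi-independence model specified by $S_{ab}^{\mathbf{i}}$ as a facial submodel of $\Mcal_S$ and then invoke Theorem~\ref{submodels} (more precisely, its contrapositive). Concretely, I would let $\Acal' $ be the submatrix of $\Acal_S$ consisting of those columns indexed by tuples $s \in S$ with $s = \mathbf{i} + (u,v)$ for some $(u,v) \in S_{ab}^{\mathbf{i}}$; that is, we keep exactly the columns whose coordinates agree with $\mathbf{i}$ in every block $j \neq a,b$. The first and main step is to check that $\Acal'$ is a facial submatrix of $\Acal_S$, i.e.\ that $\mathcal{P}_{\Acal'} = \mathrm{conv}(\Acal')$ is a face of $\mathcal{P}_{\Acal_S}$.

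To verify the face condition I would produce a supporting hyperplane. For each $j \neq a,b$ let the $j$th block of $\Acal_S$ have rows indexed by $[n_j]$, and recall that in each block the column indexed by $s$ has a single $1$, in the row equal to the $j$th coordinate of $s$. Define the linear functional $w$ on $\mathbb{R}^{(\text{rows of }\Acal_S)}$ that assigns weight $1$ to the row $(j, i_j)$ for every $j \neq a,b$ (where $i_j$ is the $j$th coordinate of $\mathbf{i}$) and weight $0$ to all other rows. Then for a column indexed by $s \in S$, the value $\langle w, \text{(column }s)\rangle$ counts how many of the blocks $j \neq a,b$ have their $j$th coordinate equal to $i_j$; this equals the number $k-2$ of such blocks if and only if $s$ agrees with $\mathbf{i}$ outside $a,b$, and is strictly smaller otherwise. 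Hence $w$ attains its maximum value $k-2$ on exactly the columns of $\Acal'$, so $\mathcal{P}_{\Acal'}$ is the face of $\mathcal{P}_{\Acal_S}$ cut out by $w$, and $\Acal'$ is a facial submatrix.

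Next I would identify the log-linear model $\Mcal_{\Acal'}$ with the $2$-way quasi-independence model $\Mcal_{S_{ab}^{\mathbf{i}}}$ up to the harmless operations that do not change the ML-degree: the rows outside blocks $a$ and $b$ that are not identically zero on these columns are all equal to the constant row (they are the indicator of a fixed coordinate that every retained column shares), and the remaining rows outside blocks $a,b$ are identically zero; deleting zero rows and duplicate/constant rows (keeping one copy of $\mathbf{1}$, which already lies in the rowspan) does not alter the toric variety or its ML-degree. What remains is exactly the $\Acal$-matrix $\Acal_{S_{ab}^{\mathbf{i}}}$ of the $2$-way model, since the blocks $a$ and $b$ restricted to the retained columns record precisely the pairs $(s,t) \in S_{ab}^{\mathbf{i}}$. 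Therefore $\Mcal_{\Acal'}$ and $\Mcal_{S_{ab}^{\mathbf{i}}}$ have the same ML-degree.

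Finally, by hypothesis $\Mcal_{S_{ab}^{\mathbf{i}}}$ has ML-degree greater than $1$, hence so does the facial submodel $\Mcal_{\Acal'}$ of $\Mcal_S$; by the contrapositive of Theorem~\ref{submodels}, $\Mcal_S$ must have ML-degree greater than $1$. I expect the only real obstacle to be the bookkeeping in the face argument — making sure the supporting functional $w$ is correctly defined on the row space of the full matrix $\Acal_S$ and that the maximizing face is exactly the set of columns I claimed — together with the routine but slightly fussy verification that passing from $\Acal'$ to $\Acal_{S_{ab}^{\mathbf{i}}}$ via deletion of zero and redundant rows genuinely preserves the ML-degree.
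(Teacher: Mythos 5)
Your proof is correct and follows essentially the same route as the paper: exhibit the slice as a face of the marginal polytope via the supporting functional that is the indicator of the rows $(j,i_j)$ for $j\neq a,b$ (the paper's $\mathbf{a}^{\mathbf{i}}$ is exactly your $w$, written as $\mathbf{y}^{\mathbf{i}}$ padded with zeros), observe that the retained rows outside blocks $a,b$ are constant or zero so the facial submodel coincides with $\Mcal_{S_{ab}^{\mathbf{i}}}$, and apply the contrapositive of Theorem~\ref{submodels}. No gap to report.
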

\begin{proof}
    For each $\mathbf{i} \in N_{ab}$, we define $\Acal^{\mathbf{i}}$ to be the $\Acal$-matrix associated to the 2-way quasi-independence model specified by $S_{ab}^\mathbf{i}$. The order of the rows of the $\Acal$-matrix does not affect the associated log-linear model. So without loss of generality, we may consider the case where $a =1$ and $b=2$.  We define a column vector $\mathbf{y}^{\mathbf{i}}$ in $\{0,1\}^{[n_3]\times \dots \times [n_k]}$ with coordinates indexed by $(\ell, j)$ where $3 \leq \ell \leq k$ and $j \in [n_\ell]$. For each $\ell$, it has entries
    \[
    y^\mathbf{i}_{(\ell,j)} = \begin{cases}
        1, & \text{ if } j = i_\ell, \\
        0, & \text{ otherwise.}
    \end{cases}
    \]
We index the columns of $\Acal_S$ by elements of $S$ and group them according to the last $k-2$ coordinates. With this order on the columns, $\Acal_S$ is of the form
$$\mathcal{A}_S={ \left( \arraycolsep=1.4pt\def\arraystretch{2.2}
\begin{array}{c|c|c|c}
    \mathcal{A}^{1,\dots,1} & \cdots & \mathcal{A}^\mathbf{i} & \cdots \\
    \hline
    \mathbf{y}^{1,\dots,1} \dots \mathbf{y}^{1,\dots,1} & \cdots &\mathbf{y}^{\mathbf{i}}\dots \mathbf{y}^{\mathbf{i}} & \cdots
    \end{array} \right)}.$$

    Let $\mathcal{P}_S$ be the convex hull of the columns of $\Acal_S$.
    Then, consider the columns of the section of $\Acal_S$ of the~form

    $$\left( \arraycolsep=1.4pt\def\arraystretch{2.2}
\begin{array}{c}
     \mathcal{A}^{\mathbf{i}}\\
    \hline
    \mathbf{y}^{\mathbf{i}}\dots \mathbf{y}^{\mathbf{i}}
    \end{array} \right).$$
    
     Let $\mathcal{P}^\mathbf{i}$ denote the convex hull of the columns of this matrix. 
    We claim that $\mathcal{P}^\mathbf{i}$ is a face of $\mathcal{P}_S$. Indeed, we may obtain a $(n_1 + \dots +n_k)$-dimensional row vector $\mathbf{a}^\mathbf{i}$ by appending $n_1 + n_2$ zeros to the beginning of $(\mathbf{y}^\mathbf{i})^T$. Then $\mathbf{a}^{\mathbf{i}} \cdot \mathbf{x}$ is equal to $k-2$ for each vertex of $\mathcal{P}^{\mathbf{i}}$ and is strictly less than $k-2$ for every other column of $\Acal_S$. So $\mathcal{P}^{\mathbf{i}}$ is the face of $\mathcal{P}_S$ that maximizes the linear functional $\mathbf{a}^{\mathbf{i}}$, as needed.

    Note that all rows of this matrix below $\Acal^\mathbf{i}$ are either the row of all ones or the row of all zeros. So the quasi-independence model specified by this matrix is the same as that of $S^\mathbf{i}_{12}$. Hence for each $\mathbf{i}$, we have that $\Mcal_{\Acal^{\mathbf{i}}}$ is a facial submodel of $\Mcal_S$. The contrapositive of Theorem \ref{submodels} tells us that if there exists even one $\mathcal{A}^i$ such that $\mathcal{M}_{\mathcal{A}^i}$ has ML-degree greater than $1$, then the ML-degree of $\mathcal{M}_{S}$ must also be greater than~$1$.
\end{proof}

\subsection{No 3-way interaction model}
To construct a counterexample to the converse of Theorem \ref{slices}, consider the \emph{no $3$-way interaction model}. 
A no three-way interaction model is a model on three variables that parameterizes the distributions where all possible interactions involve only two of the three variables. In the most simple case, the three variables $X_1, X_2,$ and $X_3$ can each take values $1$ or $2$. Let  $S = \{(1,1,1), (1,1,2), (1,2,1), (1,2,2), (2,1,1), (2,1,2), (2,2,1), (2,2,2)\}$ so that the no $3$-way interaction model is $\Mcal_S$.

We now wish to verify two things: that the ML-degree of this model is greater than one and that the facial submodels of this model are ML-degree one. 
 It is well known that that the ML-degree of this model is $3$, see for example \cite{amendola2019maximum,brysiewicz2023lawrence}. 
 A simple Macaulay2 computation shows the polytope associated to $\mathcal{M}_S$, is simplicial (i.e., each of the facets of this polytope are simplices). Therefore, the ML-degree of the facial submodels associated to the facets are $1$. Finally, as each face of the polytope is contained in a facet, each of the facial submodels of $\mathcal{M}_S$ are contained within a simplicial model, and hence are each of ML-degree $1$. 

Thus, this is a model that is comprised of facial submodels with rational MLE although the entire model has ML-degree greater than one. Hence, this serves as a counterexample to the converse of Theorem \ref{slices}. 

\subsection*{Acknowledgements}
J.I.C. is grateful for support from the L’Oréal-UNESCO
For Women in Science UK and Ireland Rising Talent Award Programme.
H.A.H. gratefully acknowledges funding from the
Royal Society RGF$\backslash$EA$\backslash$201074 and UF150238 and the UK Centre for Topological Data Analysis EPSRC grant EP/R018472/1. We thank the Royal Society Enhancement grant RGF$\backslash$EA$\backslash$201074 that supported N.C.P.'s undergraduate research experience.

\bibliographystyle{alpha}
\bibliography{ref}

\end{document}